\providecommand{\bm}[1]{\boldsymbol{#1}} 
\newtheorem{lemma}{Lemma}[section]
\newtheorem{theorem}[lemma]{Theorem}
\newtheorem{remark}[lemma]{Remark}
\newtheorem*{remark*}{Remark}
\newcommand\cT{\mathcal{T}}
\newcommand\cE{\mathcal{E}}
\newcommand{\bnu}{\bm\nu}
\def\bV{\boldsymbol{\mathcal V}}
\def\bfg{\boldsymbol{g}}
\def\bfv{\boldsymbol{v}}
\def\div{\mathop{\mathrm{div}}\nolimits}
\def\bdiv{\mathop{\mathbf{div}}\nolimits}
\def\tr{\mathop{\mathrm{tr}}\nolimits}
\newcommand{\jump}[1]{[\![ #1 ]\!]}
\newcommand\hdiv{{H(\div; \;\Omega)}}
\newcommand\qin{\quad\text{in}\quad}
\newcommand\qon{\quad\text{on}\quad}
\newcommand\qand{\quad\text{ and }\quad}
\providecommand{\CR}{\mathrm{CR}}
\newcommand{\CRh}{\CR_{h}}
\newcommand{\bzero}{\boldsymbol0}
\newcommand{\bsi}{\bm\sigma}
\newcommand{\bta}{\bm\tau}
\newcommand{\PP}{\mathbb{P}}
\providecommand{\bSigma}{\boldsymbol{\Sigma}}
\providecommand{\abs}[1]{\left|#1\right|}
\providecommand{\set}[1]{\left\{#1\right\}}
\providecommand{\norm}[1]{\left\|#1\right\|}
\providecommand{\normNC}[1]{\norm{#1}_{nc}}
\providecommand{\signorm}[1]{\norm{#1}_{\bSigma}}
\providecommand{\Lnorm}[2]{\norm{#1}_{0,#2}}
\providecommand{\bLnorm}[2]{\big\|{#1}\big\|_{0,#2}}
\providecommand{\Hnorm}[2]{\norm{#1}_{1,#2}}
\providecommand{\Hseminorm}[2]{\left|#1\right|_{1,#2}}
\providecommand{\seminorm}[2]{\left|#1\right|_{#2}}
\providecommand{\avg}[1]{\set{#1}}
\providecommand{\PiCR}{\boldsymbol{\Pi}_{cr}}
\def\bSg{\boldsymbol{{\underline\Sigma}}}
\providecommand{\bsignorm}[1]{\norm{#1}_{\bSg}}
\providecommand{\mkred}[1]{{\color{red}{#1}}}
\newcommand{\I}{{\mathbf I}}
\newcommand{\R}{\mathbb{R}}
\newcommand{\bn}{\boldsymbol{n}}
\newcommand{\bg}{\boldsymbol{g}}
\def\bfu{\boldsymbol{u}}
\def\bff{\boldsymbol{f}}
\def\bfx{\boldsymbol{x}}
\providecommand{\aDGnorm}[1]{\seminorm{#1}{a}}
\newcommand{\onlyBs}{\mathcal{B}_s}
\newcommand{\Bs}[2]{\onlyBs\big[(#1), (#2)\big]}
\newcommand{\onlyBnc}{\mathcal{B}_{nc}}
\newcommand{\Bnc}[2]{\onlyBnc\big[(#1), (#2)\big]}
\title[Crouzeix-Raviart and  dual mixed methods]{The Crouzeix-Raviart element for non-conforming dual mixed methods: A priori analysis}
\author{Tom\'as P. Barrios}
\address{Departamento de Matem\'atica y F{\'\i}sica Aplicadas, Universidad Cat\'olica de la Sant\'{\i}sima Concepci\'on, Casilla 297, Concepci\'on, Chile}
\address{Grupo de Investigación en Análisis Numérico y Cálculo Científico, GIANUC2, Concepción, Chile.}
\email{tomas@ucsc.cl}
\author{J. Manuel Casc\'on}
\address{Departamento de Econom{\'\i}a e Historia Econ\'omica, Universidad de Salamanca, Salamanca 37008, Spain}
\email{casbar@usal.es}
\author{Andreas Wachtel}
\address[corresponding author]{Departamento de Matem\'aticas, ITAM, R\'io Hondo 1, Ciudad de M\'exico 01080, Mexico}
\email{andreas.wachtel@itam.mx}
\thanks{This research was partially supported by CONICYT-Chile through the FONDECYT project with No.\ 1200051 and the Direcci\'on de Investigaci\'on of the Universidad Cat\'olica de la Sant{\'\i}sima Concepci\'on. AW gratefully acknowledges the financial support by the Asociaci\'on Mexicana de Cultura A.C.}
\keywords{Discontinuous Galerkin, Crouzeix-Raviart, \emph{a priori} error estimate}
\subjclass{65N30, 65N12, 65N15}
\begin{document}
\providecommand{\bydef}{\coloneqq}%

\date{}

\begin{abstract}
	Under some regularity assumptions, we report an \emph{a priori} error analysis of a dG scheme for the Poisson and Stokes flow problem in their dual mixed formulation.
  Both formulations satisfy a Babu\v{s}ka-Brezzi type condition within the space $H(\div)\times L^2$.
  It is well known that the lowest order Crouzeix-Raviart element paired with piecewise constants satisfies such a condition on  (broken)  $H^1\times L^2$ spaces.
  In the present article, we use this pair.
  The continuity of the normal component is weakly imposed by penalising  normal jumps of the broken $H(\div)$ component.
  For the resulting methods,  we prove well-posedness and convergence with  constants independent of data and mesh size.
  We report 
  error estimates in the methods natural norms
  and
  optimal local error estimates for the divergence error. 
  In fact, our finite element solution shares for each triangle one DOF with the CR interpolant and the divergence is locally the best-approximation for any regularity.
  Numerical experiments support the findings  and  suggest that the other errors converge optimally
  even  for
  the lowest regularity solutions and 
  a crack-problem, as long as the crack is resolved by the mesh.
\end{abstract}

\maketitle

\section{Introduction}\label{section1}
  
For a simple read, we start with the Poisson's equation.
Let $\Omega$ be a bounded and simply connected domain in $\R^2$ with polygonal boundary $\Gamma$. 
Then, given $f\in L^2(\Omega)$ and $g\in H^{1/2}(\Gamma)$ we look for $u\in H^1(\Omega),$ such that
\begin{equation}\label{model}
  -\Delta u = f \quad\text{in}\quad \Omega
  \qquad\text{and}\qquad u =
  g \quad\text{on}\quad \Gamma \,. 
\end{equation}

We follow \cite{SIAM-a-2001} and introduce the gradient $\bsi \bydef -\nabla u$ in $\Omega$ as an additional unknown. 
In this way, \eqref{model} can be reformulated as the following
problem in $\bar{\Omega}$:
\textit{Find $(\bsi,u)$ in appropriate spaces, such that}
\begin{equation}\label{dualmodel}
\bsi+\nabla u=0 \qin \Omega\;,\quad \div\bsi=f\qin \Omega\qand
 u=g\qon\Gamma\,.
\end{equation}
In the classical approach \cite{SIAM-a-2001} the scalar-valued unknown $u$ is sought in broken $H^1$ and the vectorial unknown $\bsi$ in $L^2.$
In the present work, we are interested in approximating $\bsi$  in a discrete space  that locally belongs to $H(\div)$ and the scalar-valued $u\in L^2.$
This allows to use piecewise constants to approximate $u$ without slowing down convergence for $\bsi,$ as it would in \cite{SIAM-a-2001}.

This kind of approach has been also applied in the previous works \cite{bb-2006,  bb-2010, bb-2012} and  \cite{bbs-2017}. 
All of them consider the standard local Raviart-Thomas spaces, to approximate $\bsi.$

Alternatively, it is possible to use other pairs which weakly  preserve the continuity of normal components of the vectorial unknown.
This  motivates us to approximate $\bsi$ using the space that locally is the lowest order Crouzeix-Raviart (CR) element.
This non-conforming element was introduced in the early work  \cite{Crouzeix73} in the framework of the Stokes problem analysed in the classical velocity pressure formulation.
Since then,  many advances have been made using this element, for an overview we refer to \cite{Brenner2015CR}.
Sometimes, the CR-element is considered to be a special case of DG methods (on conforming meshes),
where its use eliminates  some terms of the stabilised bilinear form.
However, the proof of the DG methods mentioned earlier (\emph{e.g.} \cite{bbs-2017, BB20}) cannot be extended easily to this element, since the Raviart-Thomas space is not a subspace of the CR-space.
Up to our best knowledge, there is no analysis developed considering this element for the
Stokes problem formulated with the unknowns velocity and pseudostress, \emph{i.e.}, in its dual mixed formulation. 
However, a conforming scheme for this approach was introduced in \cite{ctvw2010}.

Then, the proposal of this article is twofold: 
First, we explore the use of the CR-element in order to approximate the $H(\div)$ space for the mixed Poisson problem.
After that, we extend the approach to approximate the solution of the Stokes equations in their velocity-pseudostress formulation.
In both cases, we do require a conforming mesh
and the discrete approximation of the divergence is the local best-approximation for any regularity.
In numerical experiments we confirm that the approximation quality of the Raviart-Thomas space (for least regularity) can be improved within the CR space when weakly imposing the continuity of the normal component of $\bsi.$

The paper is organised as follows.
We end this section introducing notation.
Sections \ref{section-2}, \ref{section-3} and \ref{secStabiblityErrors} contain the design of the method, the existence and uniqueness of the discrete solution and stability, as well as, \emph{a priori} estimates.
	 In Section 5 we extend the applicability of the Crouzeix-Raviart element to Stokes system approximated by the unusual velocity-pseudostress formulation. 
Numerical examples are reported in Section \ref{secNumerics}.

In the rest of the paper we  will use the following  notation. 
Given any Hilbert space $H,$ we denote by $H^2$ the space of vectors of length $2,$ 
and by $H^{2\times2}$ the space of tensors, all with entries in $H.$ 
Tensor notation will be used in Section~\ref{secStokes}.
We also use the standard notation for Sobolev spaces and norms. 
In particular, let 
$\hdiv \bydef\set{\bta\in[L^2(\Omega)]^{2}\colon\div(\bta)\in L^2(\Omega)}.$
Finally, we use $C$ or $c,$ with or without subscripts, to denote generic constants, independent of the discretization parameters, which may take different values at different occurrences.

%
\section{The new non-conforming formulation}\label{section-2}
In this section, we derive a discrete formulation for the linear model \eqref{model}, applying an unusual  discontinuous Galerkin method in divergence form. 
We begin with some definitions and notations.

\subsection{Meshes}
Let $\{\cT_h\}_{h>0}$ be a conforming family of triangulations of $\bar{\Omega}$  made up of straight-side triangles $T$ with diameter $h_T$ and unit outward normal to $\partial T$ denoted by $\bm{\nu}_T.$
As usual, the index $h$ also denotes $h\bydef\max_{T\in\cT_h}h_T.$
Then, given $\cT_h$, its edges are defined as follows. 
An \emph{interior edge} of $\cT_h$ is the (non-empty) interior of $\partial T\cap\partial T',$ where $T$ and $T'$ are two adjacent elements of $\cT_h.$
We denote by $\cE_I$ the set of all interior edges of $\cT_h$  in $\Omega$ and  by $\cE_\Gamma$  the list of all boundary edges, respectively. 
Then, $\cE\bydef\cE_I\cup\cE_\Gamma$ denotes the skeleton  inherited from the triangulation $\cT_h.$
Moreover, we denote by $|e|$ the length of an edge $e\in\cE$ and by $|T|$ the area of $T\in\cT_h.$

\subsection{Averages and normal jumps}
Now, in order to define average and jump operators, let $T$ and $T'$ be two adjacent elements of $\cT_h$ and $\bfx$ be an arbitrary point on the interior edge $e=\partial T\cap\partial T'\in\cE_I$. 
In addition, let $q$ and $\bfv$ be scalar- and  vector-valued functions, respectively, that are smooth inside each element $T\in\cT_h.$
We denote by $(q_{T,e}, \bfv_{T,e})$ the restriction of $(q_T, \bfv_T)$ to $e.$
Then, we define the averages at $\bfx\in e$ by:
$\{q\}\bydef\frac{1}{2}\big(q_{T,e}+q_{T',e}\big)$ and $\{\bfv\}\bydef\frac{1}{2}\big(\bfv_{T,e}+\bfv_{T',e}\big).$
Similarly, the jumps at $\bfx\in e$ are given by
$
\jump{q}\bydef q_{T,e}\,\bnu_T + q_{T',e}\,\bnu_{T'}$
and $\jump{\bfv}\bydef \bfv_{T,e}\cdot\bnu_T + \bfv_{T',e}\cdot\bnu_{T'}.$
On boundary edges $e$, we set $\{q\}\bydef q$, 
$\{\bfv\}\bydef \bfv$, as well as $\jump{q}\bydef q\,\bnu$
and $\jump{\bfv}\bydef \bfv\cdot\bnu.$
These operators use traces and normal-traces of funcions on edges which are well-defined for functions in the broken space $H^\epsilon(\cT_h)\bydef \set{v\in L^2(\Omega)\colon v\in H^\epsilon(T) \;\text{for all}\; T\in\cT_h}$ with $\epsilon>1/2.$ 
We refer to \mkred{\cite[p.6]{GR86}} for a definition of $H^\epsilon(\Omega).$
 Since our approach is non-conforming, we also introduce 
the elementwise divergence operator $\div_h$
and the broken Sobolev space $H(\div;\cT_h)\bydef \set{\bta\in L^2(\Omega)^2\colon \div\bta\in L^2(T) \;\text{for all}\; T\in\cT_h}$ defined in the standard way.
Throughout, in order to shorten notation, we use the subspace
\[
	\bSigma\bydef\textstyle \set{\bta\in H(\div;\cT_h) \colon \int_e\jump{\bta} = 0 \;\forall e\in\cE_I }
	\,.
\]

\subsection{Discrete spaces}
We define  discrete spaces as follows.
Let $\PP_\ell(T)$ the space of polynomials of degree $\ell$ on $T\in\cT_h.$
To shorten definitions  
we define the discontinuous space
	  $\PP_\ell(\cT_h) \bydef \set{ v\in L^2(\Omega) \colon \left.v\right|_T \in\PP_\ell(T) \;\forall T\in\cT_h}.$
Then, the scalar- and vector-valued CR-spaces use the CR-element in each component as follows:
\[
  \begin{aligned}
		  \CRh &\bydef \textstyle\set{ v\in\PP_1(\cT_h) \colon \int_{e}\jump{v}=0\;\forall e\in\cE_I} \,,\\
	\bSigma_h &\bydef \set{\bta\in [H(\div, \cT_h)]^2\colon
	  \bta \in\left[\CRh\right]^2}\,.\\
  \end{aligned}
\]
Out method will approximate $u$ within the space  $V_h \bydef \PP_0(\cT_h)$
	  and $\bsi$ in 
	  $\bSigma_h\subset\bSigma.$

\subsection{The weak formulation of the Poisson problem}\label{sec_weakPoisson}
The global weak formulation of \eqref{dualmodel} is obtained as usual and reads:
  \textit{Find $(\bsi,u)\in H(\div;\Omega)\times L^2(\Omega)$, such that}
\begin{subequations}
		\begin{align}
			\label{eqweakPoissonA}
		&&  \int_{\Omega} \bsi\cdot\bta - \int_{\Omega} u\,\div\bta
		  &= -\int_\Gamma g (\bta\cdot\bm{\nu})&& \forall\,\bta\in H(\div;\Omega)\,, \\
			\label{eqweakPoissonB}
		&&  - \int_{\Omega} v\, \div\bsi&= -\int_{\Omega} f v && \forall\,v\in L^2(\Omega)\,.
		\end{align}
\end{subequations}

In order to derive the discrete scheme, we consider the first equation in \eqref{dualmodel} on each $T\in\cT_h,$  multiply by a test function in $H(\div, \cT_h)$ and  integrate by parts, to deduce
\[
		\int_T \bsi\cdot\bta - \int_Tu\,\div\bta 
+\int_{\partial T}u (\bta\cdot\bm{\nu}) = 0 \qquad \forall\,\bta\in H(\div;\cT_h) \,.
\]
Summing up the boundary terms we get
\[
		\sum_{T\in\cT_h} \int_{\partial T}u (\bta\cdot\bm{\nu}) =
		\sum_{e\in\cE_I}\int_{e} \Big[ u_-(\bta_-\cdot\bm{\nu_-}) + u_+(\bta_+\cdot\bm{\nu}_+)\Big]
   + \sum_{e\subset\Gamma}\int_{e} g(\bta\cdot\bm{\nu})  \,,
\]
where the signs denote the restrictions to the two neighbouring cells in $\cT_h.$
On each interior edge, we have the standard dG identity
\[
  \int_{e} u_-(\bta_-\cdot\bm{\nu_-}) + u_+(\bta_+\cdot\bm{\nu}_+)
  = \int_{e} \jump{\bta}\avg{u} + \avg{\bta}\jump{u} 
  \,.
\]
Now, since the solution $u$ belongs to $H^{t}(\Omega), \,t>1/2,$  the jump $\jump{u}$ vanishes  on interior edges $\cE_I.$
This means, the global weak  formulation for test functions in $\bta \in H(\div,\cT_h)$ simplifies to
\begin{equation}
		  \int_{\Omega} \bsi\cdot\bta - \int_{\Omega} u\,\div_h\bta
  + \sum_{e\in\cE_I}\int_{e} \jump{\bta}\avg{u}
  = - \int_{\Gamma} g(\bta\cdot\bm{\nu})\,. 
		\label{eqWeakPoissonNC}
\end{equation}
Finally, we realise that $\int_{e}\jump{\bta_h}\avg{\tilde{u}}=0$ for all  $\bta_h\in\bSigma_h$ and any approximation $\tilde{u}\in V_h$  of $u.$
Therefore, these terms will not appear in our discrete scheme.

\subsection{The global discrete scheme}
Considering \eqref{eqweakPoissonB} and \eqref{eqWeakPoissonNC} 
we arrive at the following stabilised discrete dual mixed non-conforming Galerkin formulation:
\textit{Find $(\bsi_h,u_h)\in\bSigma_h\times V_h$, such that}
\begin{subequations}\label{LDG-form1}
\begin{align}
 \label{eqLDGform1a}
&&  a_{s}(\bsi_h,\bta) - b(\bta,u_h)&=-G(\bta)  \qquad\forall\; \bta\in  \bSigma_{h} \\[1ex]
&&  -b(\bsi_h,v) &=-F(v) \qquad\forall\; v\in V_h \label{eqL2projsigmah}
\end{align}
\end{subequations}
where  the bilinear forms $a_{s}\colon\bSigma\times \bSigma\to\R$
and
$\,b\colon \bSigma\times L^2(\Omega)\to\R\,$ are defined by
\[
  a_{s}(\bsi,\bta) \bydef  \int_{\Omega} \bsi\cdot\bta 
  +
  \sum_{e\in\cE_I}\int_e\gamma_e\jump{\bsi}\jump{\bta}
  \qand
  b(\bta,v)\bydef \int_{\Omega} v\,\div_h\bta \,,
\]
where  $\gamma_e= 1/\abs{e}$ on each edge.
The linear functionals $G\colon\bSigma\to\R$ and $F\colon L^2(\Omega)\to\R$ are given by
\[
  G(\bta)\bydef  \int_{\cE_\Gamma}g\,(\bta\cdot\bm{\nu}) \qand F(v)\bydef  \int_{\Omega} f\,v \,. 
\]

\begin{remark}\label{rem_choiceGammas}
		The jumps  of the normal components of $\bsi, \bta$ in the bilinear form $a_{s}$ will be used 
  to control the consistency error  visible when comparing  \eqref{eqWeakPoissonNC} and \eqref{eqLDGform1a}.
  The value chosen for $\gamma_e$ allows this error to decrease with the optimal speed (as in dG methods).
  Additionally, this value is later shown (Lemma~\ref{lem_infsup2}) to not affect the existence of the discrete solution in a negative fashion.
Even, without penalising the jumps,
  $\bta_h$ can be thought of as weakly belonging to $H(\div, \Omega),$ since $\int_e \jump{\bta_h}=0$ for all $e\in\cE_I.$
  However, the normal component of functions $\bsi_h\in\bSigma_h$  jumps across edges
  and the penalty reduces this behaviour and the consistency error.
\end{remark}

For arguments below we will use the following equivalent form of problem \eqref{LDG-form1}:
\textit{Find  $(\bsi_h,u_h)\in\bSigma_h\times V_h$, such that}
\begin{equation}
	\Bs{\bsi_h,u_h}{\bta,v}  = -G(\bta) - F(v)
  \quad \forall (\bta,v)\in\bSigma_h\times V_h
  \label{eqCompleteSystem}
\end{equation}
where
\begin{equation}
		\Bs{\bsi_h,u_h}{\bta,v}  
		\bydef  a_{s}(\bsi_h, \bta) - b(\bta,u_h) - b(\bsi_h,v) \,.
  \label{eq_bilinearFormBs}
\end{equation}

In order to study the existence, uniqueness and approximation qualities of the discrete solution 
we introduce the following norms and properties of the CR interpolant.
The space $\bSigma$ is equipped with the norm $\signorm{\cdot}\colon \bSigma\to \R,$ which is defined by
\[
	\signorm{\bta}^2 \bydef \aDGnorm{\bta}^2 +  \Lnorm{\div_h\bta}{\Omega}^2
	\quad\forall\,\bta\in\bSigma\,,
\]
where  $\aDGnorm{\cdot}\colon \bSigma \to \R$ is defined by
\begin{equation}  \label{defAnormLap}
  \aDGnorm{\bta}^2 \bydef a_{s}(\bta,\bta) = \Lnorm{\bta}{\Omega}^2 + \bLnorm{\gamma^{1/2}\jump{\bta}}{\cE_I}^2 
\quad\forall\,\bta\in\bSigma\,.
\end{equation}
We note that the jumps vanish for $\bta\in\bSigma\cap H(\div, \Omega).$
For $V_h$ we use the standard $L^2$-norm. 
In addition, we define the natural norm $\normNC{(\cdot,\cdot)}\colon\bSigma\times L^2(\Omega)\to\R$ of the method by
\begin{equation}
	\normNC{(\bta,v)}^2 \bydef \signorm{\bta}^2 + \Lnorm{v}{\Omega}^2
\quad\forall\,(\bta,v)\in \bSigma\times L^2(\Omega)\,.
	\label{defNCnorm}
\end{equation}

\subsection{Properties of the Crouzeix-Raviart interpolant}

 Given $\varepsilon>\frac{1}{2},$
 the CR-space and approximation operator $\Pi_{cr}\colon H^{\varepsilon}(\Omega) \to \CRh$ was first introduced in \cite{Crouzeix73}
and is defined as follows:
\begin{equation}
  \int_{e} \Pi_{cr} w = \int_{e} w \qquad \forall e\in\cE\,.
  \label{eq_defPiCR}
\end{equation}
The vector-valued operator 
$\PiCR\colon \left[H^{\varepsilon}(\Omega)\right]^2 \to \bSigma_h$ 
uses the same definition for each component.
A useful property of this operator is the following: 
\begin{equation}
  \label{eqPiCRdivergence}
\int_T\div (\PiCR\bsi) = \int_T\div \bsi 
\quad \text{for all}\quad \bsi\in H(\div,\Omega)\cap [H^{\varepsilon}(\Omega)]^2, T\in\cT_h.
\end{equation}

Furthermore, we will use the following inf-sup condition.
\begin{lemma}\label{lem_infsupCR} 
  There exists a constant $C_{\Omega}>0$ only depending on the domain, such that 
for every $v\in V_h,$
there exists $\bta \in \bSigma_h$ such that 
\[
\int_\Omega \div_h(\bta) v = \Lnorm{v}{\Omega}^2 
\qquad\text{and}\qquad
\Hnorm{\bta}{\cT_h} \leq C_{\Omega}\Lnorm{v}{\Omega}\,.
\]
\end{lemma}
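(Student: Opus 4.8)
The plan is to reduce this discrete inf-sup condition to the surjectivity of the divergence onto $L^2(\Omega)$ combined with the commuting property \eqref{eqPiCRdivergence} of the Crouzeix--Raviart interpolant: I lift the piecewise constant $v$ to a genuine $H^1$ vector field with divergence $v$, interpolate it with $\PiCR$, and read off both the duality identity and the stability bound.

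First I would construct a lift $\bm w\in[H^1(\Omega)]^2$ with $\div\bm w=v$ in $\Omega$ and $\Hnorm{\bm w}{\Omega}\le C_\Omega\Lnorm{v}{\Omega}$, with $C_\Omega$ depending only on $\Omega$. Since $v\in V_h$ need not have vanishing mean, I would split $v=\bar v+(v-\bar v)$, where $\bar v\bydef|\Omega|^{-1}\int_\Omega v$. The constant part is handled explicitly by $\bm w_0\bydef\tfrac{\bar v}{2}\,\bfx$, for which $\div\bm w_0=\bar v$ and $\Hnorm{\bm w_0}{\Omega}\le C\abs{\bar v}\le C\Lnorm{v}{\Omega}$. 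The mean-free remainder $v-\bar v\in L^2(\Omega)$ admits, by the classical bounded right inverse of the divergence on a bounded Lipschitz domain (Bogovski\u\i), a field $\bm w_1\in[H^1_0(\Omega)]^2$ with $\div\bm w_1=v-\bar v$ and $\Hnorm{\bm w_1}{\Omega}\le C_\Omega\Lnorm{v-\bar v}{\Omega}\le C_\Omega\Lnorm{v}{\Omega}$. Setting $\bm w\bydef\bm w_0+\bm w_1$ gives the desired lift.

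Next I would set $\bta\bydef\PiCR\bm w$, which lies in $\bSigma_h$ by definition of the interpolant. Because $\div\bm w=v\in L^2(\Omega)$ we have $\bm w\in H(\div;\Omega)\cap[H^1(\Omega)]^2\subset H(\div;\Omega)\cap[H^\varepsilon(\Omega)]^2$, so \eqref{eqPiCRdivergence} applies on every $T\in\cT_h$, giving $\int_T\div\bta=\int_T\div\bm w=\int_T v$. As $v$ is constant on each $T$, this reads $\int_T\div\bta=\abs{T}\,v|_T$, whence
\[
\int_\Omega\div_h(\bta)\,v=\sum_{T\in\cT_h}v|_T\int_T\div\bta=\sum_{T\in\cT_h}\abs{T}\,(v|_T)^2=\Lnorm{v}{\Omega}^2,
\]
which is the first required identity. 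For the stability bound I would invoke the broken $H^1$-stability of the CR interpolant, $\Hnorm{\PiCR\bm w}{\cT_h}\le C\Hnorm{\bm w}{\Omega}$, which follows from the standard local estimates (the elementwise gradient of $\PiCR\bm w$ is the average of $\nabla\bm w$, so $\Hseminorm{\PiCR\bm w}{T}\le\Hseminorm{\bm w}{T}$, together with $\Lnorm{\bm w-\PiCR\bm w}{T}\le C h_T\Hseminorm{\bm w}{T}$). Combining with the lifting bound yields $\Hnorm{\bta}{\cT_h}\le C\,\Hnorm{\bm w}{\Omega}\le C\,C_\Omega\Lnorm{v}{\Omega}$, and absorbing constants completes the proof.

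The hard part will be the first paragraph: exhibiting a right inverse of the divergence whose continuity constant depends only on $\Omega$ and which covers data of nonzero mean, since the Bogovski\u\i\ operator only inverts the divergence on the mean-free subspace and must be complemented by the explicit constant-mean field. The only other point requiring care is the broken $H^1$-stability of $\PiCR$ for merely $H^1$ data; both are classical, and everything else is bookkeeping built on \eqref{eqPiCRdivergence}.
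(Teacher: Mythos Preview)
Your proposal is correct. The paper's own proof is a one-line citation: it simply observes that the statement is equivalent (via the characterization in \cite{GR86}) to the classical Crouzeix--Raviart inf-sup condition established in \cite{Crouzeix73}. What you have written is precisely the Fortin-operator construction that underlies that classical result---lift $v$ to $\bm w\in[H^1(\Omega)]^2$ with $\div\bm w=v$, set $\bta=\PiCR\bm w$, and invoke the commuting property \eqref{eqPiCRdivergence}---so your argument is not a genuinely different route but rather a self-contained unpacking of the proof the paper defers to the literature. The only detail going slightly beyond the textbook Stokes inf-sup (where pressures are mean-free) is your explicit treatment of the constant part $\bar v$ via $\bm w_0=\tfrac{\bar v}{2}\,\bfx$, needed here because $V_h=\PP_0(\cT_h)$ carries no mean constraint; you handle this correctly.
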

\begin{proof}
		By  \cite{GR86} this result is equivalent to 
		the inf-sup condition proved in \cite{Crouzeix73}.
\end{proof}

\subsection{A modified inf-sup condition.}
For the existence and uniqueness of $(\bsi_h, u_h)$ we will show that $\bta$ in Lemma~\ref{lem_infsupCR} satisfies $\signorm{\bta}\leq C\Hnorm{\bta}{\cT_h}$ for appropriate parameters $\gamma_e.$
Lemma~\ref{lem_infsup2} proves this bound using the following lemma.

\begin{lemma}\label{lem_traceInverse}
  Let $T$ be a triangle with edge $e$ and  let $v\in\mathbb{P}_k(T)$, then
  \[
	\Lnorm{v}{e}^2 \leq \frac{1}{2}(k+1)(k+2)\frac{|e|}{|T|}\Lnorm{v}{T}^2 \,.
  \]
\end{lemma}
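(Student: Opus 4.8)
The plan is to reduce the two-dimensional estimate to a sharp one-dimensional weighted inequality by foliating $T$ with the pencil of segments joining the vertex opposite $e$ to the points of $e$; the key point is that this foliation produces exactly the geometric factor $|e|/|T|$ and isolates the polynomial degree into a single clean scalar inequality.

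First I would set up the change of variables. Let $v_0$ be the vertex of $T$ opposite $e$, parametrise $e$ by arclength $\ell\in[0,|e|]$ through a point $x_e(\ell)$, and define $\Phi(\ell,s)\bydef v_0 + s\,(x_e(\ell)-v_0)$ for $s\in[0,1]$. Since $T$ has straight sides, $\Phi$ maps $[0,|e|]\times[0,1]$ onto $T$ and its Jacobian equals $s\,h$, where $h=2|T|/|e|$ is the height of $T$ over $e$ (the factor $s$ being the usual fan-out factor of a pencil through a point). Writing $\tilde v_\ell(s)\bydef v(\Phi(\ell,s))$, a polynomial of degree $\le k$ in $s$ with $\tilde v_\ell(1)=v(x_e(\ell))$, this yields the two identities
\[
  \int_T v^2 = \frac{2|T|}{|e|}\int_0^{|e|}\!\!\int_0^1 \tilde v_\ell(s)^2\, s\,ds\,d\ell
  \qquad\text{and}\qquad
  \int_e v^2 = \int_0^{|e|} \tilde v_\ell(1)^2\, d\ell .
\]

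Second, the whole estimate then follows by integrating over $\ell$ the one-dimensional inequality
\[
  p(1)^2 \le (k+1)(k+2)\int_0^1 p(s)^2\, s\,ds
  \qquad\forall\,p\in\mathbb{P}_k([0,1]),
\]
applied to $p=\tilde v_\ell$: combining it with the two identities above reproduces exactly the constant $\tfrac12(k+1)(k+2)\,|e|/|T|$, with no further geometric bookkeeping.

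Third, I would prove the one-dimensional inequality. Its best constant is the squared operator norm of the evaluation functional $p\mapsto p(1)$ on $\mathbb{P}_k([0,1])$ equipped with the weighted inner product $\langle p,q\rangle=\int_0^1 pq\,s\,ds$, and by the Riesz representation this norm equals $K(1,1)=\sum_{n=0}^k \phi_n(1)^2$, where $\{\phi_n\}_{n=0}^k$ is the orthonormal basis for $\langle\cdot,\cdot\rangle$ (the Jacobi polynomials $P_n^{(0,1)}$ transplanted to $[0,1]$) and $K$ is the associated reproducing kernel. The main—and essentially only nontrivial—step is the endpoint identity $\phi_n(1)^2=2(n+1)$, which gives $\sum_{n=0}^k \phi_n(1)^2=(k+1)(k+2)$; this is a classical Jacobi-polynomial evaluation that I would verify by induction from the three-term recurrence. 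Everything else (the change of variables, the passage from the ray-wise bound to the integrated bound, and the matching of constants) is routine, so I expect the Jacobi endpoint computation to be the sole technical obstacle.
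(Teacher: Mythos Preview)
Your argument is correct and complete. The paper, however, does not actually prove this lemma: its entire proof is a one-line citation, pointing out that the inequality is the penultimate bound in the proof of \cite[Theorem~3]{WH03}. So the two ``approaches'' differ only in that the paper outsources the work while you carry it out.

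Substantively, your foliation from the opposite vertex is precisely the Duffy (collapsed-coordinate) transformation on which the Koornwinder--Dubiner basis in \cite{WH03} is built, and your key identity $\sum_{n=0}^k \phi_n(1)^2=(k+1)(k+2)$ for the orthonormal Jacobi $P_n^{(0,1)}$ family is the same endpoint computation that produces their constant. What you gain is a self-contained derivation that makes the sharp constant transparent without sending the reader to another paper; what the authors gain is brevity, since this trace inverse inequality is an auxiliary tool here, not a main result.
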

\begin{proof}
  The penultimate bound  in the proof of \cite[Theorem 3]{WH03} states this result.
\end{proof}

\begin{lemma}\label{lem_infsup2} 
		Choosing  a non-negative function $\left.\gamma\right|_e \leq  \frac{1}{|e|}\min\set{|T|h_T^{-2}, |T'|h_{T'}^{-2}},$
		there exists a constant ${C_{sup}>0}$ independent of  mesh and data, such that 
for every $v\in V_h$, 
there exists $\bta \in \bSigma_h$ such that 
\[
\int_\Omega \div_h(\bta) v = \Lnorm{v}{\Omega}^2 
\qquad\text{and}\qquad
\signorm{\bta} \leq C_{sup}\Lnorm{v}{\Omega}\,.
\]
\end{lemma}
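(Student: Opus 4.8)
The plan is to reuse the very function $\bta\in\bSigma_h$ furnished by Lemma~\ref{lem_infsupCR}. It already satisfies the identity $\int_\Omega\div_h(\bta)\,v=\Lnorm{v}{\Omega}^2$ and the broken $H^1$ bound $\Hnorm{\bta}{\cT_h}\le C_\Omega\Lnorm{v}{\Omega}$, so it suffices to prove the norm comparison $\signorm{\bta}\le C\,\Hnorm{\bta}{\cT_h}$ with $C$ independent of mesh and data; then $C_{sup}=C\,C_\Omega$ closes the argument. Recalling $\signorm{\bta}^2=\Lnorm{\bta}{\Omega}^2+\bLnorm{\gamma^{1/2}\jump{\bta}}{\cE_I}^2+\Lnorm{\div_h\bta}{\Omega}^2$, the first summand is trivially $\le\Hnorm{\bta}{\cT_h}^2$, and since $\abs{\div\bta}\le\sqrt2\,\abs{\nabla\bta}$ pointwise the divergence term obeys $\Lnorm{\div_h\bta}{\Omega}^2\le 2\,\Hseminorm{\bta}{\cT_h}^2$. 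All the difficulty sits in the penalty term.

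The hard part is this penalty term. Bounding $\Lnorm{\jump{\bta}}{e}^2$ by the edge traces $\Lnorm{\bta_T}{e}^2$ through Lemma~\ref{lem_traceInverse} and then absorbing $\gamma_e$ yields a spurious factor $h_T^{-2}$ that $\Hnorm{\bta}{\cT_h}$ cannot control. The remedy is to exploit that $\bta\in\bSigma_h\subset\bSigma$, i.e.\ $\int_e\jump{\bta}=0$ for every $e\in\cE_I$. As each component of $\bta$ is affine on each triangle, the normal jump $\jump{\bta}=(\bta_T-\bta_{T'})\cdot\bnu_T$ is an affine function on $e$ with vanishing mean, so $\Lnorm{\jump{\bta}}{e}^2=\tfrac{|e|^2}{12}\,\Lnorm{\partial_s\jump{\bta}}{e}^2$, where $\partial_s$ denotes the tangential derivative along $e$. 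This identity is the crux: it trades an $L^2$-trace of $\bta$ for a component of the constant gradient $\nabla\bta$.

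Next I would split $\partial_s\jump{\bta}=\partial_s(\bta_T\cdot\bnu_T)+\partial_s(\bta_{T'}\cdot\bnu_{T'})$, each summand being the constant $\bm{t}_e^{\!\top}(\nabla\bta_T)\bnu_T$, resp.\ for $T'$, of modulus at most $\abs{\nabla\bta_T}$. Applying Lemma~\ref{lem_traceInverse} with $k=0$ to these constants gives $\Lnorm{\partial_s(\bta_T\cdot\bnu_T)}{e}^2\le\tfrac{|e|}{|T|}\Hseminorm{\bta}{T}^2$, and likewise for $T'$, whence $\Lnorm{\jump{\bta}}{e}^2\lesssim|e|^3\big(|T|^{-1}\Hseminorm{\bta}{T}^2+|T'|^{-1}\Hseminorm{\bta}{T'}^2\big)$. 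Multiplying by $\gamma_e$ and estimating the $T$- and $T'$-contributions respectively with $\gamma_e\le\tfrac{1}{|e|}|T|h_T^{-2}$ and $\gamma_e\le\tfrac{1}{|e|}|T'|h_{T'}^{-2}$ cancels the cell areas and produces $\gamma_e\Lnorm{\jump{\bta}}{e}^2\lesssim\tfrac{|e|^2}{h_T^2}\Hseminorm{\bta}{T}^2+\tfrac{|e|^2}{h_{T'}^2}\Hseminorm{\bta}{T'}^2$. Since $|e|\le h_T$ and $|e|\le h_{T'}$ for an edge of the corresponding triangle, both prefactors are $\le1$; this is exactly where the prescribed bound on $\gamma$ renders the $h_T^{-2}$ harmless.

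Finally I would sum over $e\in\cE_I$, noting each triangle touches at most three edges, to obtain $\bLnorm{\gamma^{1/2}\jump{\bta}}{\cE_I}^2\lesssim\Hseminorm{\bta}{\cT_h}^2\le\Hnorm{\bta}{\cT_h}^2$. Collecting the three contributions gives $\signorm{\bta}^2\le C\,\Hnorm{\bta}{\cT_h}^2$ with an absolute constant $C$, and the stated bound follows with $C_{sup}=\sqrt{C}\,C_\Omega$, while the identity from Lemma~\ref{lem_infsupCR} supplies the first conclusion unchanged. The only genuinely delicate point is the mean-zero/tangential-derivative identity for $\jump{\bta}$, which encapsulates why the non-conforming space $\bSigma_h$ is compatible with this penalty weight.
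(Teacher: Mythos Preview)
Your proof is correct, but the route differs from the paper's. Both arguments begin by taking the very $\bta$ supplied by Lemma~\ref{lem_infsupCR} and reducing the task to bounding the penalty term $\bLnorm{\gamma^{1/2}\jump{\bta}}{\cE_I}$ by $\Hseminorm{\bta}{\cT_h}$. The paper does this by subtracting the lowest-order Raviart--Thomas interpolant $\mathrm{RT}\bta$ (well defined because the edge averages of $\bta$ are single-valued), observing $\jump{\mathrm{RT}\bta}=0$, applying the trace inverse inequality of Lemma~\ref{lem_traceInverse} to $\bta-\mathrm{RT}\bta$, and then invoking the RT approximation estimate $\Lnorm{\bta-\mathrm{RT}\bta}{T}\le C h_T\Hseminorm{\bta}{T}$ from \cite{AD99}. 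You instead exploit directly that $\jump{\bta}$ is affine on $e$ with zero mean, so $\Lnorm{\jump{\bta}}{e}^2=\tfrac{|e|^2}{12}\Lnorm{\partial_s\jump{\bta}}{e}^2$, and then bound the (constant) tangential derivative by the elementwise gradient. Your argument is more elementary and fully self-contained---no external approximation result is needed---and it even yields an explicit constant; its limitation is that it leans on the piecewise-affine structure of the lowest-order CR space, whereas the paper's RT-interpolant device is the more standard DG template and suggests how one might proceed for higher-order variants.
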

\begin{proof}\providecommand{\PiRT}{\mathrm{RT}}
		We only show that $\bta$ from Lemma~\ref{lem_infsupCR} satisfies $\signorm{\bta}\leq C\Hnorm{\bta}{\Omega}.$
		The only difficult term contains the normal jumps $\jump{\bta}$ which we bound as follows.
 First, we note that the Raviart-Thomas interpolant (of lowest order) is well defined for $\bta\in \bSigma_h\subset [H^1(\cT_h)]^2,$ since the average of $\bta$ across edge $e$ is continuous, \emph{i.e.}, $\int_e\bta|_T = \int_e\bta|_{T'}.$
   We write $\PiRT\bta$ and realise $\jump{\PiRT\bta}=0$ (pointwise) across all edges $e\in\cE_I.$
   Then
   \[
  \sum_{e\in\cE_I} \int_{e} \gamma \jump{\bta}^2
  = \sum_{e\in\cE_I} \int_{e} \gamma \jump{\bta - \PiRT\bta}^2
  \leq C_1 \sum_{T\in\cT_h}\sum_{e\subset\partial T}  \gamma|_e \frac{|e|}{|T|} \Lnorm{\bta- \PiRT\bta}{T}^2
\]
where $C_1$ depends on Lemma~\ref{lem_traceInverse} and is independent of mesh and data.
Now, a simplified version of \cite[Theorem~4.1]{AD99} gives 
$\Lnorm{\bta- \PiRT\bta}{T}^2 \leq C_2 h_T^2\Hseminorm{\bta}{T}^2$
with $C_2$ independent of~$T.$
Therefore, for  $\gamma|_e$ satisfying the hypothesis
the constant $C_{sup} \leq (1+2C_1C_2)C_{\Omega}$  is independent of mesh properties
which finishes the proof.
\end {proof}

\begin{remark*}
	Lemma~\ref{lem_infsup2} is trivial  when $\gamma|_e=0,$
	however, the upper bound 
	gives the largest possible choice that ensures the inf-sup condition \eqref{eq_iscSystem} (with a constant independent of the mesh) and therefore the well-posedness of problem \eqref{LDG-form1}.
	 In order
	to guarantee that the consistency error reduces at optimal speed we need to choose $\gamma|_e = 1/|e|$ which is close to the upper bound and allows optimal convergence.
Bigger values for $\gamma$ will reduce the inf-sup constant and deteriorate the convergence.
\end{remark*}

\section{Existence and uniqueness}\label{section-3}
The well-posedness of Problem \eqref{LDG-form1} and equivalently \eqref{eqCompleteSystem} is established in the next theorem.
The proof is well-known and included for completeness.

\begin{theorem}\label{thm_iscSistema} 
There exists a constant $\alpha>0,$ such that for all $(\bsi, u)\in\bSigma_h\times V_h$ we have
\begin{equation}
	 \alpha \normNC{(\bsi,u)} \leq
  \sup_{\bzero\neq(\bta,v)\in\bSigma_h\times V_h  }
  \frac{\Bs{\bsi,u}{\bta,v}}{\normNC{(\bta,v)}} \,.
  \label{eq_iscSystem}
\end{equation}
\end{theorem}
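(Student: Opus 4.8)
The plan is to verify the two classical Babu\v{s}ka--Brezzi ingredients and then assemble them into \eqref{eq_iscSystem} by an explicit choice of test function. The two ingredients are coercivity of $a_{s}$ on the kernel of $b$, and the inf--sup condition for $b$; the latter is exactly Lemma~\ref{lem_infsup2}.

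First I would record the one genuinely useful structural fact: since every $\bta\in\bSigma_h\subset[\CRh]^2$ is componentwise affine, $\div_h\bta\in\PP_0(\cT_h)=V_h$. Consequently, on the kernel $K\bydef\set{\bta\in\bSigma_h\colon b(\bta,v)=0\ \forall v\in V_h}$ one may test with $v=\div_h\bta\in V_h$ to obtain $\Lnorm{\div_h\bta}{\Omega}^2=b(\bta,\div_h\bta)=0$. Hence $\div_h\bta=0$ on $K$, so there $\signorm{\bta}^2=\aDGnorm{\bta}^2=a_s(\bta,\bta)$; that is, $a_s$ is coercive on $K$ with respect to the \emph{full} norm $\signorm{\cdot}$, with constant $1$. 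This is the only non-obvious point, and it is what makes the seminorm $\aDGnorm{\cdot}$ (which ignores the divergence) sufficient.

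Rather than invoke the abstract theory, I would build the supremising test function directly. Given $(\bsi,u)\in\bSigma_h\times V_h$, let $\bta_u\in\bSigma_h$ be supplied by Lemma~\ref{lem_infsup2} for $v=u$, so that $b(\bta_u,u)=\Lnorm{u}{\Omega}^2$ and $\signorm{\bta_u}\le C_{sup}\Lnorm{u}{\Omega}$, and set
\[
  \bta\bydef\bsi-\delta\,\bta_u,
  \qquad
  v\bydef -u-\eta\,\div_h\bsi,
\]
with $\delta,\eta>0$ to be fixed. A short computation, using $b(\bsi,\div_h\bsi)=\Lnorm{\div_h\bsi}{\Omega}^2$ and cancelling the two copies of $b(\bsi,u)$, gives
\[
  \Bs{\bsi,u}{\bta,v}
  = \aDGnorm{\bsi}^2 + \delta\Lnorm{u}{\Omega}^2 + \eta\Lnorm{\div_h\bsi}{\Omega}^2
    - \delta\,a_s(\bsi,\bta_u).
\]
The only cross term is controlled by Cauchy--Schwarz and Young, $|a_s(\bsi,\bta_u)|\le\aDGnorm{\bsi}\,\aDGnorm{\bta_u}\le C_{sup}\aDGnorm{\bsi}\Lnorm{u}{\Omega}$, so that after choosing $\delta$ small (e.g.\ $\delta\sim C_{sup}^{-2}$) and $\eta$ of order one the right-hand side is bounded below by $c\,(\aDGnorm{\bsi}^2+\Lnorm{\div_h\bsi}{\Omega}^2+\Lnorm{u}{\Omega}^2)=c\,\normNC{(\bsi,u)}^2$.

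Finally I would bound the norm of the test function: $\signorm{\bta}\le\signorm{\bsi}+\delta\,C_{sup}\Lnorm{u}{\Omega}$ and $\Lnorm{v}{\Omega}\le\Lnorm{u}{\Omega}+\eta\Lnorm{\div_h\bsi}{\Omega}$, whence $\normNC{(\bta,v)}\le C\,\normNC{(\bsi,u)}$. Dividing the lower bound by $\normNC{(\bta,v)}$ and passing to the supremum yields \eqref{eq_iscSystem} with $\alpha=c/C$. The main obstacle is not any single estimate --- all are routine --- but the bookkeeping of the weights $\delta,\eta$; the conceptual crux is the observation $\div_h\bSigma_h\subseteq V_h$, which is what lets the divergence part of $\signorm{\cdot}$ be recovered ``for free'' from the test direction $v=-\eta\,\div_h\bsi$.
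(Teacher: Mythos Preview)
Your proof is correct and follows essentially the same approach as the paper: the paper also exploits $\div_h\bSigma_h\subseteq V_h$ and builds the supremising test pair $(\bsi-\delta\tilde\bta,\,-u-\div_h\bsi)$ with $\delta=1/C_{sup}^2$, i.e.\ your construction with $\eta=1$. The only cosmetic differences are that the paper splits the computation into two pieces (first $(\bsi,-u-\div_h\bsi)$, then $(-\delta\tilde\bta,0)$) before summing, and it does not explicitly discuss the kernel-coercivity viewpoint you include in your first paragraph.
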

\begin{proof}
		Consider $(\bsi, u)\in\bSigma_h\times V_h$ fixed.
		Then, since $\div_h\bsi\in V_h$
		we take the test functions $\bta \bydef \bsi$ and $v\bydef -u-\div_h\bsi,$ to get
\begin{equation}
		\Bs{\bsi,u}{\bta,v}
  = \Lnorm{\bsi}{\Omega}^2 + \Lnorm{\div_h\bsi}{\Omega}^2 + \int_{\cE_I} \gamma \jump{\bsi}^2
  = \signorm{\bsi}^2\,.
  \label{est_positive}
\end{equation}
Now to control $\Lnorm{u}{\Omega}$ we use Lemma~\ref{lem_infsup2}, that is, there exists $\tilde\bta\in\bSigma_h$ such that
$\signorm{\tilde\bta}\leq C_{sup}\Lnorm{u}{\Omega}$ and for every positive $\delta$ to be selected later, we have
\[
  -b(-\delta\tilde\bta, u) =  \delta\Lnorm{u}{\Omega}^2 \,.
\]
Therefore, using the test functions $\bta\bydef -\delta\tilde\bta$ and $v=0$ we get
\[
  \begin{aligned}
	 \Bs{\bsi,u}{-\delta\tilde\bta,0}
  &=-\delta a_{s}(\bsi, \tilde\bta) +  \delta\Lnorm{u}{\Omega}^2
  \\
  &\geq -\delta C_{sup}\aDGnorm{\bsi}\Lnorm{u}{\Omega}+ \delta\Lnorm{u}{\Omega}^2 
  \\
  &\geq -\frac{1}{2}\aDGnorm{\bsi}^2 -\frac{\delta^2C_{sup}^2}{2}\Lnorm{u}{\Omega}^2+  \delta\Lnorm{u}{\Omega}^2 
  \,,
  \end{aligned}
\]
and choosing $\delta\bydef 1/C_{sup}^2,$ we obtain
\begin{equation}
		\Bs{\bsi,u}{-\delta\tilde\bta,0}
  \geq -\frac{1}{2}\aDGnorm{\bsi}^2  + \frac{\delta}{2}\Lnorm{u}{\Omega}^2 \,.
  \label{est_positiveLu}
\end{equation}
Finally, combining the test functions to $\bta_2 \bydef \bsi -\delta\tilde\bta$, recalling $v\bydef -u-\div_h\bsi$
and adding up \eqref{est_positive} and \eqref{est_positiveLu} yields
\[
		\Bs{\bsi,u}{\bta_2,v}
  \geq \frac{1}{2}\signorm{\bsi}^2 +\frac{\delta}{2}\Lnorm{u}{\Omega}^2
  \geq C \normNC{(\bsi,u)}^2
  \,.
\]
On the other hand, using $\signorm{\tilde\bta}\leq C_{sup}\Lnorm{u}{\Omega} = \delta^{-1/2}\Lnorm{u}{\Omega}$ gives
\[
  \begin{aligned}
  \normNC{(\bta_2, v)} 
  &= \normNC{(\bsi -\delta\tilde\bta, -u-\div_h\bsi)}
  \\
  &\leq \signorm{\bsi} + \delta\signorm{\tilde\bta} + \Lnorm{u}{\Omega} +\Lnorm{\div_h\bsi}{\Omega}
  \\
  &\leq 2\signorm{\bsi} + (1+\delta^{1/2})\Lnorm{u}{\Omega} 
  \leq C \normNC{(\bsi, u)}
  \end{aligned}
\]
which completes the proof of \eqref{eq_iscSystem}.
\end{proof}

\section{Stability and \emph{a priori} estimates}\label{secStabiblityErrors}
In this section, we focus on the stability and \emph{a priori}  error analysis for the scheme \eqref{LDG-form1}.
The main advantage of our approach and using $\bSigma_h\times V_h$ is the inf-sup condition stated in Lemma~\ref{lem_infsup2} which allows a method without stabilisation terms for $u_h.$
This is similar to the conforming Raviart-Thomas pair.
We will perform the analysis for parts  of the norm $\normNC{(\cdot,\cdot)}.$
Hereafter, $(\bsi,u)$ and $(\bsi_h,u_h)$ will be the unique solutions of \eqref{dualmodel} and \eqref{LDG-form1}, respectively.

First, we discuss stability and best-approximation results without any additional regularity assumption.
That is, the exact solution $(\bsi, u)\in\hdiv\times H^1(\Omega)$ satisfies 
\eqref{eqweakPoissonB}, 
\eqref{eqWeakPoissonNC} 
and $\jump{\bsi}=0$ (a.e.).
Taking the difference of these equations and the scheme \eqref{LDG-form1} we get the following quasi-consistency identities:
\begin{subequations}
	\label{consistencyLap}
\begin{align}
		\label{consistencyLapA}
 a_{s}(\bsi_h-\bsi,\bta) - b(\bta,u_h-u)&= \int_{\cE_I}\jump{\bta}\avg{u} \quad&& \forall\; \bta\in  \bSigma_{h}\,, \\
		\label{consistencyLapB}
 b(\bsi_h-\bsi,v) &=0 && \forall\; v\in V_h\,.
\end{align}
\end{subequations}

The following lemma bounds the consistency error shown in identity~\eqref{consistencyLapA}.
\begin{lemma}
		\label{lemConsistencyError}
		The consistency error is bounded as follows:
		\[
			\abs{\int_{\cE_I}\jump{\bta}\avg{u} } \leq C_1h \Hseminorm{u}{\Omega} \aDGnorm{\bta} \quad \forall\; \bta\in  \bSigma_{h}\,. \\
		\]
\end{lemma}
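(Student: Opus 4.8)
The plan is to exploit the defining property of the Crouzeix–Raviart interpolant, namely that $\int_e \jump{\bta}=0$ for all $\bta\in\bSigma_h$ and every interior edge $e\in\cE_I$. Since $\avg{u}$ is the average of a (smooth, $H^1$) function, I would begin by subtracting off the edge-mean of $\avg{u}$: writing $\bar u_e \bydef |e|^{-1}\int_e \avg{u}$, the orthogonality gives $\int_e \jump{\bta}\,\bar u_e = \bar u_e \int_e \jump{\bta} = 0$. Hence on each edge
\[
  \int_e \jump{\bta}\avg{u} = \int_e \jump{\bta}\big(\avg{u} - \bar u_e\big)\,.
\]
This is the key observation: it turns the raw consistency term into something controlled by the oscillation of $u$ about its mean on each edge, which is exactly what yields a power of $h$.

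Next I would estimate each edge contribution by Cauchy–Schwarz on $L^2(e)$, namely
\[
  \Big|\int_e \jump{\bta}\big(\avg{u}-\bar u_e\big)\Big|
  \leq \Lnorm{\jump{\bta}}{e}\,\Lnorm{\avg{u}-\bar u_e}{e}\,.
\]
For the second factor I would invoke a one-dimensional Poincaré/approximation estimate on the edge: subtracting its own mean, $\Lnorm{\avg{u}-\bar u_e}{e} \leq C\,|e|\,\Hseminorm{\avg{u}}{e}$, and then control the tangential derivative of $\avg{u}$ along $e$ by the broken $H^1$-seminorm of $u$ on the two adjacent triangles via a trace inequality, picking up the right scaling $|e|/|T|$ so that the factors of $|e|$ combine to deliver the global $h\,\Hseminorm{u}{\Omega}$. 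The jump factor $\Lnorm{\jump{\bta}}{e}$ I would match against the penalty norm: since $\gamma_e=1/|e|$, the weighted jump term in $\aDGnorm{\bta}^2$ already carries $|e|^{-1}\Lnorm{\jump{\bta}}{e}^2$, so $\Lnorm{\jump{\bta}}{e} = |e|^{1/2}\big(\gamma_e^{1/2}\Lnorm{\jump{\bta}}{e}\big)$. Summing over $e\in\cE_I$ and applying discrete Cauchy–Schwarz then separates a factor bounded by $\bLnorm{\gamma^{1/2}\jump{\bta}}{\cE_I}\leq\aDGnorm{\bta}$ from a factor measuring the weighted oscillation of $u$, which reduces to $C h\,\Hseminorm{u}{\Omega}$ after the trace step.

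I expect the main obstacle to be bookkeeping the powers of $|e|$ and $|T|$ cleanly so that they close up to a single factor $h$ with a mesh-independent constant: the trace inequality (Lemma~\ref{lem_traceInverse}) contributes $|e|/|T|$, the edge-Poincaré step contributes $|e|^2$ on the seminorm, and the penalty weight $\gamma_e=1/|e|$ must cancel the surplus, all while shape-regularity of the conforming family $\{\cT_h\}$ is what keeps $|e|/|T|\sim h_T^{-1}$ and $|e|\sim h_T$ uniform. A minor care point is the handling of $\avg{u}$ on boundary edges, but those do not appear in $\cE_I$ and are excluded from the sum, so no boundary contribution arises. Once the local scaling is verified, the global estimate follows by summing and using $h_T\leq h$.
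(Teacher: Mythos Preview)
Your overall strategy---exploit $\int_e\jump{\bta}=0$ to subtract a constant, then apply a weighted Cauchy--Schwarz separating the jump factor (absorbed into $\aDGnorm{\bta}$) from an approximation term in $u$---is exactly the paper's. The paper subtracts the piecewise-constant $L^2$-projection $\Pi u\in V_h$ rather than your edge mean $\bar u_e$, but either constant is admissible at this stage thanks to the orthogonality.

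The gap is in how you bound $\Lnorm{\avg{u}-\bar u_e}{e}$. Your route is a one-dimensional Poincar\'e on $e$ followed by tracing the tangential derivative of $u$ onto $e$ via Lemma~\ref{lem_traceInverse}. This fails for $u\in H^1(\Omega)$: the trace $u|_e$ lies only in $H^{1/2}(e)$, so its tangential derivative is not in $L^2(e)$ in general and the edge Poincar\'e in the form $\Lnorm{\avg{u}-\bar u_e}{e}\le C|e|\,\Hseminorm{\avg{u}}{e}$ is unavailable; moreover Lemma~\ref{lem_traceInverse} is a \emph{polynomial} inverse estimate and does not apply to $\nabla u$. The paper avoids both issues by working on the triangle rather than on the edge: it bounds $\gamma_e^{-1}\Lnorm{\avg{u-\Pi u}}{e}^2 = |e|\,\Lnorm{\avg{u-\Pi u}}{e}^2$ with the standard $H^1(T)$ trace estimate $\Lnorm{v}{e}^2\le C\big(h_T^{-1}\Lnorm{v}{T}^2 + h_T\Hseminorm{v}{T}^2\big)$ applied to $v=u-\Pi u$, together with the triangle Poincar\'e $\Lnorm{u-\Pi u}{T}\le Ch_T\Hseminorm{u}{T}$. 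Your edge-mean choice can be handled the same way---since $\Lnorm{u-\bar u_e}{e}\le\Lnorm{u-\bar u_T}{e}$ for any constant $\bar u_T$---once you replace the tangential-derivative detour with the triangle trace estimate.
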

\begin{proof}
Let $\Pi u\in V_h$ be the $L^2$ projection of $u\in H^1(\Omega)$.
Since $\int_e \avg{\Pi u}\jump{\bta}=0$ for all $e\in\cE_I$ and all $\bta\in\bSigma_h$, 
we get
\[
		\begin{aligned}
				\abs{\sum_{e\in\cE_i} \int_e \avg{u} \jump{\bta}}
				&= \abs{\sum_{e\in\cE_i} \int_e \avg{u-\Pi u} \jump{\bta}} \\
			&\leq 
			\left(\sum_{e\in\cE_i} \int_e \gamma^{-1}\abs{\avg{u-\Pi u}}^2\right)^{1/2}
			\left(\sum_{e\in\cE_i} \int_e \gamma \jump{\bta}^2\right)^{1/2} \,.
		\end{aligned}
\]
The right term is part of $\aDGnorm{\bta}$ and 
the left term is bounded by $C_1h \Hseminorm{u}{\Omega}$ because of 
$\gamma_e^{-1} = |e|$ 
and  a standard trace estimate.
\end{proof}

We now state stability estimates.
\begin{lemma}
	The solution  $(\bsi_h, u_h)$ of scheme~\eqref{LDG-form1} is stable  in the following sense:
	\[
		\normNC{(\bsi_h, u_h)} \leq C \big(\Lnorm{f}{\Omega}+\norm{g}_{1/2, \partial\Omega}\big) \,.
	\]
	Furthermore, on each $\,T\in\cT_h\,$ we have $\,\Lnorm{\div_h\bsi_h}{T}\leq\Lnorm{\div\bsi}{T}=\Lnorm{f}{T}$.
	\label{lemStable}
\end{lemma}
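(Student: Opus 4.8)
The statement splits into two essentially independent claims, and I would treat them separately. For the global bound in $\normNC{\cdot}$, the plan is to feed the inf-sup condition of Theorem~\ref{thm_iscSistema} with the discrete solution itself. Since $(\bsi_h,u_h)$ solves \eqref{eqCompleteSystem}, we have $\Bs{\bsi_h,u_h}{\bta,v}=-G(\bta)-F(v)$, so \eqref{eq_iscSystem} yields
\[
	\alpha\,\normNC{(\bsi_h,u_h)} \le \sup_{\bzero\neq(\bta,v)} \frac{-G(\bta)-F(v)}{\normNC{(\bta,v)}}\,.
\]
It then suffices to show that $F$ and $G$ are bounded on $\bSigma_h\times V_h$ with the claimed data dependence, namely $|F(v)|\le\Lnorm{f}{\Omega}\normNC{(\bta,v)}$ and $|G(\bta)|\le C\norm{g}_{1/2,\partial\Omega}\normNC{(\bta,v)}$; inserting these two bounds into the supremum gives the result.

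The bound on $F$ is immediate from Cauchy--Schwarz, $|F(v)|\le\Lnorm{f}{\Omega}\Lnorm{v}{\Omega}\le\Lnorm{f}{\Omega}\normNC{(\bta,v)}$. The main obstacle is the boundary functional $G(\bta)=\int_{\cE_\Gamma}g\,(\bta\cdot\bnu)$, where I expect the real work to lie. I would estimate it by duality, $|G(\bta)|\le\norm{g}_{1/2,\partial\Omega}\,\norm{\bta\cdot\bnu}_{-1/2,\partial\Omega}$, and then prove a normal-trace estimate $\norm{\bta\cdot\bnu}_{-1/2,\partial\Omega}\le C\signorm{\bta}$ valid on the broken space $\bSigma$. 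The delicate point is that $\bta\in\bSigma_h$ lies only weakly in $H(\div;\Omega)$, so the classical trace theorem does not apply directly. Instead I would integrate by parts element by element against an $H^1(\Omega)$-extension $\phi$ of a test function $\psi\in H^{1/2}(\partial\Omega)$ with $\norm{\phi}_{1,\Omega}\le C\norm{\psi}_{1/2,\partial\Omega}$, obtaining
\[
	\langle\bta\cdot\bnu,\psi\rangle_\Gamma = \sum_{T\in\cT_h}\int_T(\bta\cdot\nabla\phi + \phi\,\div\bta) - \sum_{e\in\cE_I}\int_e \phi\,\jump{\bta}\,.
\]
The volume sum is controlled by $\signorm{\bta}\,\norm{\phi}_{1,\Omega}$. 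For the interior-edge sum I would exploit $\int_e\jump{\bta}=0$ to replace $\phi$ by $\phi-\bar\phi_e$ (its edge mean) and apply Cauchy--Schwarz with the weight $\gamma_e^{-1}=|e|$: the jump factor reassembles into $\aDGnorm{\bta}\le\signorm{\bta}$, while a scaled trace inequality bounds the complementary factor by $C\norm{\phi}_{1,\Omega}$. This establishes the normal-trace estimate and hence the bound on $G$.

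For the local divergence estimate I would use only the second equation \eqref{eqL2projsigmah}. Since each component of $\bsi_h$ is piecewise affine, $\div_h\bsi_h\in\PP_0(\cT_h)=V_h$; testing \eqref{eqL2projsigmah} with $v=\chi_T$ gives $\int_T\div_h\bsi_h=\int_T f$, so $\div_h\bsi_h|_T=\frac{1}{|T|}\int_T f$ is exactly the $L^2$-projection of $f$ onto the constants on $T$. As the $L^2$-projection is a contraction, $\Lnorm{\div_h\bsi_h}{T}\le\Lnorm{f}{T}$, and since $\div\bsi=f$ by \eqref{dualmodel} this reads $\Lnorm{\div_h\bsi_h}{T}\le\Lnorm{\div\bsi}{T}=\Lnorm{f}{T}$, which is the local claim and, summed over $T$, also controls the divergence contribution to $\signorm{\cdot}$.
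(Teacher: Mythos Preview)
Your argument is correct, but it takes a genuinely different route from the paper's proof.

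The paper does not bound $G$ and $F$ directly. Instead, it routes the estimate through the \emph{continuous} solution: starting from the same inf-sup inequality \eqref{eq_iscSystem}, it rewrites $\Bs{\bsi_h,u_h}{\bta,v}=\Bs{\bsi,u}{\bta,v}+\int_{\cE_I}\avg{u}\jump{\bta}$ via the quasi-consistency identity \eqref{eqBilinearConsistency}, bounds the first piece by continuity of $\onlyBs$ and the second by Lemma~\ref{lemConsistencyError}, and then invokes the stability of the continuous problem $\Hnorm{u}{\Omega}\le C(\Lnorm{f}{\Omega}+\norm{g}_{1/2,\partial\Omega})$ together with $\normNC{(\bsi,u)}^2=\Hseminorm{u}{\Omega}^2+\Lnorm{f}{\Omega}^2+\Lnorm{u}{\Omega}^2$. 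In short, the paper leverages already-proved lemmas and the continuous well-posedness, whereas you prove a new ingredient: a normal-trace estimate $\norm{\bta\cdot\bnu}_{-1/2,\partial\Omega}\le C\signorm{\bta}$ valid on the non-conforming space $\bSigma$. Your approach is self-contained and does not appeal to the continuous problem at all; the paper's approach is shorter because it recycles Lemma~\ref{lemConsistencyError} and avoids having to justify the trace bound on a broken space. Your edge-sum estimate actually picks up an extra factor of $h$ (the same mechanism as in Lemma~\ref{lemConsistencyError}), which you do not need here but which confirms the argument is sound.

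For the local divergence claim the two proofs are essentially identical: both observe that $\div_h\bsi_h\in V_h$ and that \eqref{eqL2projsigmah} (equivalently \eqref{consistencyLapB}) forces $\div_h\bsi_h|_T$ to be the $L^2$-projection of $f$ onto constants.
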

\begin{proof}
  Since $\div_h\bsi_h\in V_h$ the second local estimate follows from \eqref{consistencyLapB} 
  using $v\bydef\div_h\bsi_h.$
		For the first bound, we consider
		the definition of $\onlyBs$ \eqref{eq_bilinearFormBs} and the quasi-consistency \eqref{consistencyLap}
		to deduce
		\begin{equation}
				\Bs{\bsi_h-\bsi, u_h-u}{\bta, v} = 
				\int_{\cE_I}\avg{u}\jump{\bta}
				\quad\text{for all}\quad
				(\bta, v)\in\bSigma_h\times V_h\,.
				\label{eqBilinearConsistency}
		\end{equation}
Therefore,  from Theorem~\ref{thm_iscSistema} 
we know there exists $\alpha>0$, such that
\begin{align}
		\nonumber
 \alpha \normNC{(\bsi_h,u_h)} 
&\leq
  \sup_{\bzero\neq(\bta,v)\in\bSigma_h\times V_h  }
  \frac{\Bs{\bsi_h,u_h}{\bta,v}}{\normNC{(\bta,v)}} \\
		\nonumber
&=
  \sup_{\bzero\neq(\bta,v)\in\bSigma_h\times V_h  }
  \frac{\Bs{\bsi,u}{\bta,v} + \int_{\cE_I}\avg{u}\jump{\bta}}{\normNC{(\bta,v)}} \\
  &\leq \normNC{(\bsi,u)} + C_1 h\Hseminorm{u}{\Omega} \,.
		\label{eqStability1}
\end{align}
The last step above follows using Lemma~\ref{lemConsistencyError}
and the continuity of $\onlyBs$ with respect to the norm $\normNC{(\cdot,\cdot)},$ which is established by Cauchy's inequality.

To obtain the result, we consider 
the definition of the norm which simplifies considering that $\bsi\in\hdiv$ (\emph{i.e.} $\jump{\bsi}=0$)
and the equations in \eqref{dualmodel}, that is
\begin{equation}
	\normNC{(\bsi,u)}^2 = \Lnorm{\bsi}{\Omega}^2 + \Lnorm{\div\bsi}{\Omega}^2 +  \Lnorm{u}{\Omega}^2
	= \Lnorm{\nabla u}{\Omega}^2 + \Lnorm{f}{\Omega}^2 +  \Lnorm{u}{\Omega}^2\,.
		\label{eqStability2}
\end{equation}
Finally, considering that $u$ satisfies $C\Hnorm{u}{\Omega}\leq \Lnorm{f}{\Omega} + \norm{g}_{1/2,\partial\Omega}$ completes the proof.
\end{proof}

\subsection{Error estimates}
We continue  with \emph{a priori} estimates.
The first one is a completely local best-approximation result.
\begin{lemma}\label{lem_ORC_divPoisson}
	The discrete solution $\bsi_h\in\bSigma_h$ satisfies
\[
\Lnorm{\div(\bsi-\bsi_h)}{T} 
= \inf_{v\in V_h} \Lnorm{v - \div\bsi}{T} 
= \inf_{v\in V_h} \Lnorm{v - f}{T} \,.
\]
In particular, if $\div\bsi = f\in H^{s}(\Omega)$ for some $0<s\leq1,$ then
\[
	\Lnorm{\div(\bsi-\bsi_h)}{T} \leq C h_T^s \norm{f}_{s,T}
\]
  with a constant $C>0$ is independent of mesh and data.
  In fact, if $s=1$ then $C=1/\pi.$
\end{lemma}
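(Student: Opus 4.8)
The plan is to exploit the fact that the discrete equation forces $\div_h\bsi_h$ to coincide elementwise with the $L^2$-projection of $f$. First I would observe that, since each component of $\bsi_h\in\bSigma_h$ is a Crouzeix--Raviart function and hence piecewise affine, its elementwise divergence $\div_h\bsi_h$ is piecewise constant, so $\div_h\bsi_h\in V_h=\PP_0(\cT_h)$. Next I would test the consistency identity \eqref{consistencyLapB} with the indicator $v=\chi_T\in V_h$ of a single triangle $T$; using $\bsi\in\hdiv$ (so that $\div_h\bsi=\div\bsi$ on $T$) this gives $\int_T\div_h\bsi_h=\int_T\div\bsi=\int_T f$. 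Because $\div_h\bsi_h$ is \emph{constant} on $T$, this one scalar equation determines it uniquely as the elementwise mean of $\div\bsi$, i.e. $\div_h\bsi_h|_T=\Pi(\div\bsi)|_T$, where $\Pi$ denotes the $L^2$-orthogonal projection onto $V_h$.

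With this identification the first (exact) equality is immediate from the characterisation of $\Pi$ as the best constant approximation in $L^2(T)$: one has $\Lnorm{\div(\bsi-\bsi_h)}{T}=\Lnorm{\div\bsi-\Pi(\div\bsi)}{T}=\inf_{v\in\PP_0(T)}\Lnorm{\div\bsi-v}{T}=\inf_{v\in V_h}\Lnorm{f-v}{T}$, the last step using $\div\bsi=f$. No estimate enters here; the equalities hold for any regularity of $f$, which is precisely the local best-approximation claim.

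For the convergence rate I would invoke a Poincaré-type estimate for the projection error. When $s=1$, since every $T$ is convex with diameter $h_T$, the Payne--Weinberger inequality yields $\Lnorm{f-\Pi f}{T}\leq\frac{h_T}{\pi}\Hseminorm{f}{T}\leq\frac{h_T}{\pi}\norm{f}_{1,T}$, giving the explicit constant $C=1/\pi$. For general $0<s\leq1$ I would interpolate between the trivial bound $\Lnorm{f-\Pi f}{T}\leq\Lnorm{f}{T}$ at $s=0$ and the $s=1$ estimate above (or apply a Bramble--Hilbert argument with the appropriate fractional scaling) to recover the factor $h_T^s$ with a constant independent of mesh and data.

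The proof is essentially mechanical, and the only genuinely delicate points are conceptual rather than computational: recognising that $\div_h\bsi_h$ lands \emph{exactly} in $V_h$, so that the single scalar constraint from \eqref{consistencyLapB} pins it down to the local projection, and securing the sharp constant $1/\pi$, which relies on the Payne--Weinberger estimate valid on convex elements rather than on a generic shape-regularity constant.
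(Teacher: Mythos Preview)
Your proof is correct and follows essentially the same route as the paper: identify $\div_h\bsi_h$ as the elementwise $L^2$-projection of $\div\bsi$ via \eqref{consistencyLapB}, deduce the exact best-approximation equalities, and then invoke Payne--Weinberger (and interpolation/Bramble--Hilbert for fractional~$s$) for the rate. The paper's proof is terser but uses exactly the same ingredients, citing \cite{GR86, gs-2004, PW60c} for the final estimate.
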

  \begin{proof}
	  The consistency identity \eqref{consistencyLapB} states
		\[
				\int_T v\, \div(\bsi -\bsi_h) = 0  
				\quad \text{for all} \quad v\in V_h \text{ and } T\in\cT_h \,.
		\]
		Since $v\in V_h$ is constant in $T$  
		and $\div\bsi_h \in V_h,$
		we have $\left.(\div\bsi_h)\right|_T = \frac{1}{|T|}\int_T  \div\bsi$
		and then $\left.(\div\bsi_h)\right|_T$ is the $L^2$-projection into local constants on $T.$
		This is the best-approximation of $\div\bsi$ in $V_h$ which gives the first equality.
		The second infimum follows considering the $\div\bsi=f$ as stated in \eqref{dualmodel}.
		Finally, we refer to \cite{GR86, gs-2004,  PW60c} for the second estimate.
  \end{proof}

\begin{remark*}
		Higher order Crouzeix-Raviart pairs also satisfy Lemma~\ref{lem_ORC_divPoisson}.
\end{remark*}

\begin{theorem}\label{thmSmoothConvergence}
	Let $\bsi\in H^2(\Omega)^2$ and $u\in H^1(\Omega)$ be the solution of \eqref{dualmodel}
	and $(\bsi_h, u_h)\in\bSigma_h\times V_h$ be the solution of \eqref{LDG-form1}.
	Then, 
	\[
		\normNC{(\bsi-\bsi_h,u-u_h)} \leq C\, h \left( \norm{\bsi}_{2,\Omega} + \Hseminorm{u}{\Omega}\right)   \,,
	\]
	where $C>0$ is a constant independent of $h,  \bsi$ and $u.$
\end{theorem}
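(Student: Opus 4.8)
The plan is to run a standard inf-sup duality (Strang-type) argument, splitting the error through the Crouzeix--Raviart interpolant $\PiCR\bsi$ and the $L^2$-projection $\Pi u\in V_h$ of $u$. Writing
\[
	\bsi-\bsi_h = (\bsi-\PiCR\bsi) + (\PiCR\bsi-\bsi_h),
	\qquad
	u-u_h = (u-\Pi u) + (\Pi u-u_h),
\]
the triangle inequality reduces the task to (i) an interpolation error $\normNC{(\bsi-\PiCR\bsi,\,u-\Pi u)}$ and (ii) a purely discrete error $\normNC{(\PiCR\bsi-\bsi_h,\,\Pi u-u_h)}$ whose arguments lie in $\bSigma_h\times V_h$. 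For (ii) I would invoke the inf-sup condition of Theorem~\ref{thm_iscSistema}, so that everything hinges on controlling $\Bs{\PiCR\bsi-\bsi_h,\Pi u-u_h}{\bta,v}$ uniformly over test pairs $(\bta,v)$.

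Using bilinearity together with the quasi-consistency identity \eqref{eqBilinearConsistency}, I would rewrite
\[
	\Bs{\PiCR\bsi-\bsi_h,\Pi u-u_h}{\bta,v}
	= \Bs{\PiCR\bsi-\bsi,\Pi u-u}{\bta,v} - \int_{\cE_I}\avg{u}\jump{\bta}.
\]
The first term expands by \eqref{eq_bilinearFormBs} as $a_{s}(\PiCR\bsi-\bsi,\bta) - b(\bta,\Pi u-u) - b(\PiCR\bsi-\bsi,v)$, and the crucial simplification I expect is that both $b$-terms vanish. Indeed $\div_h\bta\in\PP_0(\cT_h)=V_h$ for every $\bta\in\bSigma_h$, so the $L^2$-orthogonality of $u-\Pi u$ to $V_h$ kills $b(\bta,\Pi u-u)$; and since $v\in V_h$ is piecewise constant while $\int_T\div(\PiCR\bsi-\bsi)=0$ by the divergence property \eqref{eqPiCRdivergence}, the term $b(\PiCR\bsi-\bsi,v)$ vanishes elementwise. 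This leaves only $a_{s}(\PiCR\bsi-\bsi,\bta)$ and the consistency contribution, the latter already bounded by $C_1h\Hseminorm{u}{\Omega}\aDGnorm{\bta}$ in Lemma~\ref{lemConsistencyError}.

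It remains to bound $a_{s}(\PiCR\bsi-\bsi,\bta)$. Its $L^2$ part $\int_\Omega(\PiCR\bsi-\bsi)\cdot\bta$ is handled by Cauchy--Schwarz and the $O(h^2)$ bound $\Lnorm{\PiCR\bsi-\bsi}{\Omega}\leq Ch^2\seminorm{\bsi}{2,\Omega}$. The \emph{main obstacle} is the jump contribution $\sum_{e\in\cE_I}\int_e\gamma_e\jump{\PiCR\bsi}\jump{\bta}$: unlike the Raviart--Thomas interpolant, $\PiCR\bsi$ carries a nonzero pointwise normal jump. Here I would exploit $\jump{\bsi}=0$ (as $\bsi\in\hdiv$) to write $\jump{\PiCR\bsi}=\jump{\PiCR\bsi-\bsi}$, bound it pointwise by the two one-sided traces of the interpolation error, and apply a scaled trace inequality (in the spirit of Lemma~\ref{lem_traceInverse}) together with the $H^2$-approximation estimates for $\PiCR$. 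With $\gamma_e=1/|e|$ and shape-regularity ($|e|\sim h_T$) this yields $\sum_e\gamma_e\int_e\jump{\PiCR\bsi}^2\leq Ch^2\seminorm{\bsi}{2,\Omega}^2$, so the jump term is $O(h)\,\seminorm{\bsi}{2,\Omega}\,\aDGnorm{\bta}$. Collecting these estimates and dividing by $\normNC{(\bta,v)}$, Theorem~\ref{thm_iscSistema} delivers $\normNC{(\PiCR\bsi-\bsi_h,\Pi u-u_h)}\leq Ch(\norm{\bsi}_{2,\Omega}+\Hseminorm{u}{\Omega})$.

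Finally, the interpolation error (i) is bounded directly: $\Lnorm{u-\Pi u}{\Omega}\leq Ch\Hseminorm{u}{\Omega}$ by a Poincaré estimate on each cell; $\aDGnorm{\bsi-\PiCR\bsi}\leq Ch\seminorm{\bsi}{2,\Omega}$ by the same trace/approximation argument as above; and $\Lnorm{\div_h(\bsi-\PiCR\bsi)}{\Omega}\leq Ch\seminorm{\bsi}{2,\Omega}$, since \eqref{eqPiCRdivergence} makes $\div_h\PiCR\bsi$ the local $L^2$-projection of $\div\bsi\in H^1$. Summing the two contributions via the triangle inequality closes the estimate. The only genuinely delicate point is the jump bound, which is precisely where the non-conformity of the CR element, that is, the absence of normal continuity, must be absorbed.
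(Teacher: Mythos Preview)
Your argument is correct, but the paper takes a shorter route precisely at the step you flag as ``the only genuinely delicate point.'' Instead of interpolating with $\PiCR$, the paper chooses $\Pi_\sigma$ to be the \emph{nodal Lagrange interpolant into continuous piecewise linears}, which is a subspace of $\bSigma_h$ once $\bsi\in H^2(\Omega)^2$. With this choice $\jump{\Pi_\sigma\bsi}=0$ pointwise on every interior edge, so the jump contribution in $\aDGnorm{\cdot}$ simply vanishes and no trace argument is needed; the divergence error is then controlled by $\Hseminorm{\bsi-\Pi_\sigma\bsi}{T}\leq Ch_T\norm{\bsi}_{2,T}$. The paper also does not exploit the vanishing of the two $b$-terms: it just invokes Cauchy continuity of $\onlyBs$ to get the full $\normNC{(\bsi-\Pi_\sigma\bsi,u-\Pi u)}$ on the right and bounds each piece. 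So your proof trades one simplification for another: you use the CR orthogonality \eqref{eqPiCRdivergence} to kill the $b$-terms but must then tame the CR jumps via a scaled trace inequality (note that Lemma~\ref{lem_traceInverse} itself is polynomial-only, so you need the multiplicative trace inequality for $H^1$ functions here), whereas the paper sidesteps the jump issue entirely at the cost of carrying the whole interpolation norm. Both are clean; the paper's choice of interpolant is the slicker device for this particular theorem.
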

\begin{proof}
		The following arguments are common  and included for completeness.

		As usual, we split the error in a discrete error and a projection error,
		\emph{i.e.}
		\[
				\begin{aligned}
				\bsi_h-\bsi &= e_h^{\bsi} + e^{\bsi} 
				\quad\text{with}\quad
				e_h^{\bsi} \bydef  \bsi_h - \Pi_{\sigma} \bsi 
				\quad\text{and}\quad
				e^{\bsi} \bydef \Pi_{\sigma} \bsi - \bsi \,, \\
				u_h-u &= e_h^{u} + e^{u} 
				\quad\text{with}\quad
				e_h^{u} \bydef  u_h - \Pi u 
				\quad\text{and}\quad
				e^{u} \bydef \Pi u - u \,,
				\end{aligned}
		\]
		where   $\Pi$ is the $L^2$-projection into $V_h$ and $\Pi_\sigma$ will be chosen later.
		Then, the norm allows to bound the errors separately as follows:
		\[
			  \normNC{(\bsi-\bsi_h,u-u_h)} \leq 
			  \normNC{(e_h^{\bsi},e_h^u)} +  \normNC{(e^{\bsi},e^u)} \,.
		\]
		We first bound the discrete error (the left norm), later we concern ourselves with the projection error (the right norm).
		To this end, we reuse the consistency error identity
		\eqref{eqBilinearConsistency}.
Hence,  from Theorem~\ref{thm_iscSistema} 
we know there exists $\alpha>0$, such that
\begin{align}
	\nonumber
	\alpha \normNC{(e_h^{\bsi}, e_h^u)} 
&\leq
  \sup_{\bzero\neq(\bta,v)\in\bSigma_h\times V_h  }
  \frac{\Bs{\bsi_h-\Pi_\sigma\bsi,u_h-\Pi u}{\bta,v}}{\normNC{(\bta,v)}} \\
		\nonumber
&=
  \sup_{\bzero\neq(\bta,v)\in\bSigma_h\times V_h  }
  \frac{\Bs{\bsi-\Pi_{\sigma}\bsi,u-\Pi u}{\bta,v} + \int_{\cE_I}\avg{u}\jump{\bta}}{\normNC{(\bta,v)}} \\
  &\leq \normNC{(\bsi-\Pi_\sigma\bsi,u-\Pi u)} + C_1 h\Hseminorm{u}{\Omega} \,.
	\nonumber
\end{align}
Here we
bounded the consistency error by Lemma~\ref{lemConsistencyError}.

It remains to bound each part of the norm $\normNC{\cdot}$ defined in \eqref{defNCnorm}.
 We start using a Poincar\'e estimate (or the standard $L^2$ projection error, see  \cite{PW60c}), to obtain
\[
	\Lnorm{u-\Pi u}{\Omega}^2
	= \sum_{T\in\cT_h} \Lnorm{u-\Pi u}{T}^2
	\leq
	 \sum_{T\in\cT_h} \frac{h_T^2}{\pi^2}\Hseminorm{u}{T}^2
	 \leq \frac{h^2}{\pi^2} \Hseminorm{u}{\Omega}^2\,.
\]
Since we are in the smooth case with $\bsi\in H^2(\Omega)^2$ we choose $\Pi_\sigma$ to be the  nodal interpolant into the piecewise linear continuous polinomials contained in $\bSigma_h$.
Then, we conclude using a known standard estimate that
\[
		\Lnorm{\div(\bsi-\Pi_\sigma\bsi)}{T} 
		\leq C \Hseminorm{\bsi-\Pi_\sigma\bsi}{T}
	\leq C h_T \norm{\bsi}_{2,T}
\]
Furthermore, since $\jump{\bsi}=0$ and $\jump{\Pi_\sigma\bsi}=0$ pointwise on all edges in $\cE_I$ 
we conclude
	$\bLnorm{\gamma^{1/2}\jump{\Pi_\sigma \bsi}}{\cE_I} =0.$

Joining these estimates  completes the proof.
\end{proof}

\begin{remark}
It is known that the treatment of the Darcy flow is similar to Poisson's equation formulated in mixed form.
In fact,  the scheme presented here coincides with  the scheme for the Darcy flow presented in \cite{BH2005}.
Hence, the analysis exhibited here can be extended, in a natural way, to the porous media equations.
Obviously,  adapting the analysis to the boundary conditions considered in the model, which should be clear at least for Dirichlet, Neumann and Mixed type boundary conditions.
In other words, the lowest order of  Crouzeix-Raviart element for the velocity and piecewise constants for the pressure is an inf-sup stable but inconsistent pair for Darcy's law, too.
\end{remark}

\clearpage
\section{Stokes}
\label{secStokes}

In this section we extend the method to the incompressible Stokes flow 
for a velocity $\bfu\colon\R^2\to\R^2$ and a pressure $p\colon\R^2\to\R$ with $p\in L^2_0(\Omega),$ 
\emph{i.e.}
\[
	\bdiv\big( \nu\nabla \bfu  - p \I) = -\bff
	\;,\quad
	\div\bfu = 0 
		\quad\text{in}\quad \Omega
		\qquad\text{and}\qquad
		\bfu = \boldsymbol{g} 
		\quad\text{on}\quad \Gamma
\]
with  given $\nu>0$ and source terms $\bff\in [L^2(\Omega)]^2$ and  $\bfg\in[H^{1/2}(\Gamma)]^2.$ 
Here and later on, $\I$ denotes the identity matrix and $\bdiv$ gives a vector whose 
entries are the divergence of a row of the tensor.
Furthermore, we use the following additional notation:
the space
 $H(\bdiv,\Omega)\bydef\set{\bta\in[L^2(\Omega)]^{2\times2}\colon\bdiv(\bta)\in L^2(\Omega)^2},$
 the broken space $H(\bdiv,\cT_h),$ 
for a tensor $\bta$ we set $\bta_{T,e}$ the restriction $\bta_T$ to $e,$
and define the jump and average  
by
$
\jump{\bta}\bydef \bta_{T,e}\,\bnu_T + \bta_{T',e}\,\bnu_{T'}$
and
$\{\bta\}\bydef\frac12\big(\bta_{T,e}+\bta_{T',e}\big),$
on $e\in\cE_I$
and by 
$\jump{\bta}\bydef \bta\,\bnu$ and  $\{\bta\}\bydef \bta$
on the boundary.
Finally, 
let $\bdiv_h$ denote the element-wise divergence operator and
$\underline{\bSigma}\bydef\textstyle \set{\bta\in H(\bdiv;\cT_h) \colon \int_e\jump{\bta} = \boldsymbol0\;\forall e\in\cE_I }.$

Now, in order to use the dual mixed 
formulation
we introduce the pseudostress
$\bsi \bydef \nu\nabla \bfu  - p \I$ in $\Omega.$ 
Taking the trace and applying $\div\bfu=0$ we realise that $p = -\frac12\tr{\bsi}$ (independent of variations of $\nu$).
This relation and $p\in L^2_0(\Omega)$
give
$\bsi\in H_0$ where $H_0\bydef \set{\bta\in H(\bdiv,\Omega)\colon \int_\Omega\tr(\bta)=0} \subset\bSg_0 \bydef
\set{\bta\in\bSg\colon \int_\Omega\tr(\bta)=0}.$
Additionally, 
defining the strong  \emph{deviator} $\bta^d \bydef \bta - \frac12(\tr{\bta})\I\in H_0$
and using the previous identities we obtain
$\bsi^d = \bsi + p\I = \nu\nabla\bfu.$
Thus, we get the equivalent \emph{velocity-pseudostress} formulation:
\textit{Find} $(\bsi, \bfu)\in H_0\times\big[H^1(\Omega)\big]^2$ \textit{such that}
\begin{equation}\label{eqStokes2}
		\frac1{\nu} \bsi^d = \nabla\bfu 
		\;,\quad
		\bdiv{\bsi}=-\bff \qin\Omega
		\qquad\text{and}\qquad
		\bfu = \boldsymbol{g} 
		\qon \Gamma\,.
\end{equation}
The pressure can be reconstructed from $\tr\bsi.$
A version of the previous discussion, as well as, the weak formulation of \eqref{eqStokes2}
can be found in \cite{bbs-2017}.
The formulation reads:
\textit{Find} $(\bsi, \bfu)\in H_0\times\big[L^2(\Omega)\big]^2$ \textit{such that}
\begin{subequations}\label{stokes-eq2-weak}
		\begin{align}
				\label{stokes-eq2-weak1}
			a(\bsi, \bta) + b(\bta, \bfu) &= G(\bta) && \forall \bta\in H_0\,,\\
				\label{stokes-eq2-weak2}
			b(\bsi, \bfv) &= F(\bfv) && \forall \bfv\in [L^2(\Omega)]^2\,.
		\end{align}
\end{subequations}
The bilinear forms $a\colon \bSg\times \bSg\to\R$ and $b\colon \bSg\times L^2(\Omega)^2\to\R$ are defined by
\[
	a(\bsi, \bta) \bydef \int_\Omega \frac1{\nu} \bsi^d : \bta^d
	\qand
	b(\bta, \bfv) \bydef \int_\Omega \bfv \cdot \bdiv(\bta)
\]
where $\bta:\bsi \bydef \sum_{i,j}\bta_{ij}\bsi_{ij}$ denotes the tensor product
and 
$G\colon \bSg\to\R$ and $F\colon L^2(\Omega)^2\to\R$ are 
linear functionals 
given by
\[
		G(\bta) \bydef \int_\Gamma (\bta\bn)\cdot\boldsymbol{g}
		\qand
		F(\bfv) \bydef -\int_\Omega \bff\cdot\bfv
\]
for all $\bsi, \bta\in \bSg$ and $\bfv\in L^2(\Omega)^2.$

Furthermore,  \cite[Theorem 2.1]{bbs-2017} states that the solution $(\bsi, \bfu)$ of \eqref{stokes-eq2-weak} is unique and stable with respect to the data, \emph{i.e.}, there exists a constant $C>0$ independent of the solution such that
\begin{equation}
		C \norm{(\bsi, \bfu)}_{H\times [L^2(\Omega)]^2} \leq \norm{\bff}_{[L^2(\Omega)]^2} + 
		\norm{\bg}_{H^{1/2}(\Gamma)}\,.
		\label{bound_continuityStokes}
\end{equation}

\subsection{The non-conforming weak form and finite elements}
Similar to Section \ref{sec_weakPoisson},
we approximate the tensor $\bsi$ in each component by the $\CRh$ element and $\bfu$ by locally constant functions,
that is, 
we consider the discrete spaces  $\bSg_h$,  $\bSg_{h,0}$ and $\bV_h$ defined by
\[
\begin{aligned}
		\bSg_h &\bydef \set{ \bta \in[L^2(\Omega)]^{2\times 2} \colon \bta\in [\CRh]^{2\times 2}} \,,
		\\[0.5ex]
		\bSg_{h,0} &\bydef \set{\bta \in\bSg_h\colon \int_{\Omega} \tr(\bta)=0} \,,
		\\[0.5ex]
		\bV_h &\bydef \set{\bfv\in [L^2(\Omega)]^2 \colon \left.\bfv\right|_T \in[\PP_0(T)]^2 \quad\forall T\in\cT_h}\,.
\end{aligned}
\]
Hence, integration by parts is done locally and gives
\[
	\int_T  \bta_h:\nabla\bfu 
	= 
	- \int_T  \bdiv(\bta_h) \cdot \bfu 
	+ \int_{\partial T}  (\bta_h \bn) \cdot \bfu  \,.
\]
Using  the same arguments as in Section \ref{sec_weakPoisson} 
yields
\[
	\begin{aligned}
		\sum_{T\in\cT_h}  \int_{\partial T}  (\bta_h \bn) \cdot \bfu
		&=
		\sum_{e\subset\Gamma}  \Big[\int_{e} (\bta_h \bn) \cdot \bg\Big]
		 +
		\sum_{e\in\cE_I}  \int_{e} \jump{\bta_h}\cdot\avg{\bfu} +\avg{\bta_h}\cdot\jump{\bfu}
		\,.
		\end{aligned}
\]
Using  $\bfu\in H^1(\Omega)^2,$
we obtain the weak form of the first identity in \eqref{eqStokes2}:
\begin{equation}
	\label{eqConsistencyStokes}
	a(\bsi, \bta_h)
	+ \int_\Omega \bfu\cdot\bdiv_h(\bta_h)
	= G(\bta_h) + \int_{\cE_I} \jump{\bta_h}\cdot\avg{\bfu} 
	\quad \forall \bta_h\in\bSg_h\,.
\end{equation}
This identity shows the consistency error
which does not appear in the discrete formulation, because
$\avg{\bfu_h}_e\in\R^2$ and $\int_e\jump{\bta_h}=\boldsymbol0.$
However, this error  has to be controlled by jumps just as in the other method.
We will see, that the previous identities impose a significant simplification of the Galerkin scheme presented in \cite{bbs-2017}.

\subsection{The theoretical and practical scheme}\label{S:Theorical_Practical_Scheme}
We now define define two numerical schemes.
Both schemes approximate $\bsi\in H_0$ by $\bsi_h\in\bSg_{h,0},$
but the practical scheme imposes the average-free trace by a Lagrange multiplier and allows test-functions with local support.
The later proven equivalence of the schemes allows to reduce the theory.

Identity \eqref{eqConsistencyStokes}  
and the properties mentioned thereafter,
give  the following discrete (theoretical) scheme:
\textit{Find $(\bsi_h,\bfu_h)\in\bSg_{h,0}\times\bV_h$, such that}
\begin{subequations}\label{stokes-LDG-formT}
		\begin{align}
			\label{stokes-LDG-formT1}
			a_{h}(\bsi_h, \bta) + b_h(\bta, \bfu_h) &= G(\bta) \qquad \forall \bta\in\bSg_{h,0}\\
			\label{stokes-LDG-formT2}
			b_h(\bsi_h, \bfv) &= F(\bfv) \qquad \forall \bfv\in\bV_h
		\end{align}
\end{subequations}
where  the bilinear forms $a_{h} \colon \bSg\times \bSg\to\R$ and $b_h\colon \bSg\times [L^2(\Omega)]^2\to\R$ are defined by
\[
	\begin{aligned}
	a_{h}(\bsi,\bta) &\bydef \frac{1}{\nu}\int_{\Omega} \bsi^d :\bta^d + \frac{1}{\nu}\int_{\cE_I}\gamma\jump{\bsi}\cdot\jump{\bta} && \forall \bsi, \bta\in \bSg\,,\\
   b_h(\bta,\bfv) &\bydef \int_{\Omega} \bfv\cdot\bdiv_h(\bta)
   && \forall \bfv\in L^2(\Omega)^2\,,
\end{aligned}
 \]
 with the parameters $\left.\gamma\right|_e = \frac1{\abs{e}}$ 
 and $G$ and $F$ are defined as for \eqref{stokes-eq2-weak}.

The practical scheme reads:
\textit{Find $(\bsi_h, \bfu_h, \phi)\in \bSg_h\times\bV_h\times\R$, such that}
\begin{subequations}
		\label{eq_practicalScheme}
		\begin{align}
		\label{eq_practicalScheme1}
		\Bnc{\bsi_h, \bfu_h}{\bta, \bfv} + \frac{1}{\nu}\phi\int_{\Omega}\tr(\bta) &= G(\bta) +F(\bfv) && \forall (\bta,\bfv)\in\bSg_h\times\bV_h \\
			\frac{1}{\nu}\psi\int_{\Omega}\tr(\bsi_h) &= 0  &&\forall \psi\in\R 
		\label{eq_practicalScheme2}
		\end{align}
\end{subequations}
where the bilinear form $\onlyBnc\colon (\bSg\times\bV_h)\times(\bSg\times\bV_h) \to \R$ is given by
\begin{equation}
 	\Bnc{\bsi, \bfu}{\bta, \bfv} \bydef
	a_{h}(\bsi, \bta) +b_h(\bta,\bfu) + b_h(\bsi,\bfv) \,.
	\label{defBformStokes}
\end{equation}

Given  the property  $\bSg_h = \bSg_{h,0}\oplus \R\I,$
the following result relates the solutions of \eqref{stokes-LDG-formT} and \eqref{eq_practicalScheme}.
\begin{lemma}\label{lem_equivalence}
		The pair  $(\bsi_h, \bfu_h)\in \bSg_{0,h}\times\bV_h$ is a solution of \eqref{stokes-LDG-formT} if
		and only if 
		$(\bsi_h, \bfu_h, 0)\in\bSg_{h}\times\bV_h\times\R$  solves \eqref{eq_practicalScheme}.
\end{lemma}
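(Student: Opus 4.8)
The plan is to exploit the stated splitting $\bSg_h=\bSg_{h,0}\oplus\R\I$: every tensor $\bta\in\bSg_h$ decomposes uniquely as $\bta=\bta_0+c\I$ with $\bta_0\in\bSg_{h,0}$ and $c=\frac{1}{2|\Omega|}\int_\Omega\tr(\bta)\in\R$ (using $\tr\I=2$). The whole argument rests on understanding how the constant-trace direction $\R\I$ interacts with the forms, so I would first record three elementary facts: $(c\I)^d=\bzero$; $\jump{c\I}=\bzero$ on every $e\in\cE_I$ (because $\bnu_T+\bnu_{T'}=\bzero$ there); and $\bdiv_h(c\I)=\bzero$. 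These immediately give $a_{h}(\bsi_h,c\I)=0$ and $b_h(c\I,\bfv)=0$, so $\onlyBnc$ does not see the $\R\I$ part of its second argument except through the multiplier term $\frac1\nu\phi\int_\Omega\tr(\bta)=\frac{2c|\Omega|}{\nu}\phi$.

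The second ingredient is the compatibility condition $\int_\Gamma\bg\cdot\bn=0$, which is forced on the data by $\div\bfu=0$ together with $\bfu=\bg$ on $\Gamma$ and is implicit in the continuous well-posedness \eqref{bound_continuityStokes}; it yields $G(c\I)=c\int_\Gamma\bn\cdot\bg=0$. Consequently, on the $\R\I$ direction the first practical equation collapses to $\frac{2c|\Omega|}{\nu}\phi=0$ for all $c\in\R$, which is exactly what makes the choice $\phi=0$ both consistent and, in fact, forced. This reconciliation of the $\R\I$ slot is the only real content of the lemma, and it is where I expect the one subtlety to lie: one must not forget that $G$ does \emph{not} vanish on $\R\I$ without invoking compatibility.

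For the forward implication I assume $(\bsi_h,\bfu_h)$ solves \eqref{stokes-LDG-formT}. Since $\bsi_h\in\bSg_{h,0}$ we have $\int_\Omega\tr(\bsi_h)=0$, which is precisely \eqref{eq_practicalScheme2}. Setting $\phi=0$, I then verify \eqref{eq_practicalScheme1} for an arbitrary $(\bta,\bfv)=(\bta_0+c\I,\bfv)$: by linearity and the vanishing identities above, $\Bnc{\bsi_h,\bfu_h}{\bta,\bfv}=\Bnc{\bsi_h,\bfu_h}{\bta_0,\bfv}$ and the multiplier term is zero, while on the right $G(\bta)=G(\bta_0)$ by compatibility. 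Adding \eqref{stokes-LDG-formT1} tested with $\bta_0$ and \eqref{stokes-LDG-formT2} tested with $\bfv$ gives $\Bnc{\bsi_h,\bfu_h}{\bta_0,\bfv}=G(\bta_0)+F(\bfv)$, which is the required identity.

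For the converse I assume $(\bsi_h,\bfu_h,0)$ solves \eqref{eq_practicalScheme}. Equation \eqref{eq_practicalScheme2} gives $\int_\Omega\tr(\bsi_h)=0$, i.e. $\bsi_h\in\bSg_{h,0}$. Restricting \eqref{eq_practicalScheme1} to trace-free test tensors $\bta_0\in\bSg_{h,0}$ annihilates the multiplier term (both because $\phi=0$ and because $\int_\Omega\tr(\bta_0)=0$), leaving $\Bnc{\bsi_h,\bfu_h}{\bta_0,\bfv}=G(\bta_0)+F(\bfv)$ for all $(\bta_0,\bfv)\in\bSg_{h,0}\times\bV_h$. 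Taking $\bfv=\bzero$ recovers \eqref{stokes-LDG-formT1} and taking $\bta_0=\bzero$ recovers \eqref{stokes-LDG-formT2}, so $(\bsi_h,\bfu_h)$ solves the theoretical scheme, completing the equivalence.
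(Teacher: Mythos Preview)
Your proof is correct and follows essentially the same route as the paper: both rely on the decomposition $\bSg_h=\bSg_{h,0}\oplus\R\I$, the identities $(c\I)^d=\bzero$, $\jump{c\I}=\bzero$, $\bdiv_h(c\I)=\bzero$, and the compatibility $G(\I)=\int_\Gamma\bg\cdot\bn=0$. The only noteworthy difference is that the paper's proof additionally shows (by testing \eqref{eq_practicalScheme1} with $(\I,\boldsymbol0)$) that \emph{every} solution of the practical scheme satisfies $\phi=0$, which is slightly more than the lemma states but is what the paper actually uses when it concludes that ``both schemes give the same solutions''; your argument establishes exactly the stated equivalence and nothing more.
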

\begin{proof}
	We proceed similar to \cite[Theorem 4.1]{bbs-2017}.
First, we take a solution of  \eqref{eq_practicalScheme}.
The identity  \eqref{eq_practicalScheme2} gives
$\int_\Omega \tr(\bsi_h)=0$ which implies $\bsi_h\in\bSg_{h,0}$
		and, since $\bSg_{h,0}\subset\bSg_h,$ the identity \eqref{eq_practicalScheme1} gives \eqref{stokes-LDG-formT}.
		We conclude that $(\bsi_h, \bfu_h)$ solves \eqref{stokes-LDG-formT}.
		
	\smallskip
		Second, we introduce a few identities to prove that  every solution of \eqref{eq_practicalScheme} satisfies $\phi=0.$ 
		To this end, 
we write each $\bta\in\bSg_h$ as $\bta = \bta_0 + \rho\I$ with $\bta_0\in\bSg_{h,0}$ and $\rho=\frac1{2\abs{\Omega}}\int_\Omega\tr(\bta).$
Using this decomposition we conclude
		\[
			G(\bta) = G(\bta_0) 
			\quad \text{since}\quad
			G(\I) = \int_{\Gamma} \bfg\cdot\bn = \int_{\Omega} \div\bfu = 0\,.
		\]
	Furthermore, considering definitions we realise that 
		\begin{equation}
				\Bnc{\bta, \bfv}{\I, \boldsymbol{0}} = 0 \quad \forall (\bta,\bfv)\in \bSg_{h}\times\bV_h \,.
		  \label{eq_IisOrthogonal}
		\end{equation}
		These identities and \eqref{eq_practicalScheme1} with the test-pair $(\tilde\bta =\I,\, \bfv=\boldsymbol0)$ prove $2\phi \abs{\Omega} =0$ and $\phi = 0.$
Furthermore,  the previous identities show  that \eqref{eq_practicalScheme1} is equivalent to
		\[
				\begin{aligned}
						F(\bfv) +G(\bta_0) - \Bnc{\bsi_h,\bfu_h}{\bta_0,\bfv}
						= \frac\phi\nu\int_{\Omega}\tr(\bta) \,.
				\end{aligned}
		\]

	Finally, taking a solution $(\bsi_h, \bfu_h)$ of \eqref{stokes-LDG-formT}
		the last equivalence shows that   $(\bsi_h, \bfu_h, 0)$ solves  \eqref{eq_practicalScheme1}.
		Equation \eqref{eq_practicalScheme2} is satisfied by definition since $\bsi_h\in\bSg_{h,0}.$
		This finishes the proof.
\end{proof}

Lemma~\ref{lem_equivalence} proves  that both schemes give the same solutions
and, therefore, we can prove the existence and uniqueness for the scheme \eqref{stokes-LDG-formT} only.

\subsection{Existence and uniqueness}
We aim to prove an inf-sup condition for scheme \eqref{stokes-LDG-formT} which confirms its existence and uniqueness.
To this end, we define the following semi-norms and norms:
\begin{align}
		\label{defAnormStokes}
		\aDGnorm{\bta}^2 &\bydef a_{h}(\bta,\bta) 
		= \frac{1}{\nu}\bLnorm{\bta^d}{\Omega}^2 + \frac{1}{\nu}\bLnorm{\gamma^{1/2}\jump{\bta}}{\cE_I}^2  && \forall\,\bta\in\bSg_0\,,
  \\
  \nonumber
  \bsignorm{\bta}^2 &\bydef \aDGnorm{\bta}^2 + \frac1\nu\Lnorm{\bdiv_h\bta}{\Omega}^2 
  && \forall\,\bta\in\bSg_0\,,
\end{align}
and
\[
  \normNC{(\bta,\bfv)}^2 \bydef \bsignorm{\bta}^2 + \nu\Lnorm{\bfv}{\Omega}^2 \quad\forall\,(\bm{\tau},\bfv)\in\bSg_0\times [L^2(\Omega)]^2\,.
\]

\begin{remark}
It is not immediate  that  $\bsignorm{\cdot}$ is a norm.
Luckily, Lemma 3.10 in \cite{bbs-2017} proves 
the existence of a constant $C > 0,$ independent of the meshsize, such that
\begin{equation}
	C \Lnorm{\bta}{\Omega}^2
	\leq \bLnorm{\bta^d}{\Omega}^2
	+  \Lnorm{\bdiv_h\bta}{\Omega}^2 
	+  \bLnorm{\gamma^{1/2}\jump{\bta}}{\cE_I}^2 
	\qquad\forall\bta\in\bSg_0\,.
	\label{eqLem310BBS17}
\end{equation}
	Hence, $\bsignorm{\cdot}$ is a norm on $\bSg_0$
	Furthermore, since  $\bsignorm{\I}=0$ it is clear that this is not a norm on $\bSg.$
\end{remark}

\begin{lemma}\label{lem_infsupStokes} 
	There exists a constant $C_{sup}>0$ independent of mesh and data, such that 
for every $\bfv\in \bV_h,$
there exists $\tilde\bta \in \bSg_{h,0}$ such that 
\[
\int_\Omega \bdiv_h(\tilde\bta) \cdot\bfv = \Lnorm{\bfv}{\Omega}^2 
\qquad\text{and}\qquad
\bsignorm{\tilde\bta} \leq C_{sup}\,\nu^{-1/2}\Lnorm{\bfv}{\Omega}\,.
\]
\end{lemma}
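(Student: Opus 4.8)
The plan is to reduce this tensorial inf-sup statement to the scalar one already established in Lemma~\ref{lem_infsup2}, exploiting that $\bdiv$ acts row-wise and that the constraint $\int_\Omega\tr=0$ can be imposed for free at the end. Write $\bfv=(v_1,v_2)$ with $v_i\in V_h$. First I would apply Lemma~\ref{lem_infsup2} to each component separately, obtaining vectors $\bta^{(1)},\bta^{(2)}\in\bSigma_h$ with $\int_\Omega\div_h(\bta^{(i)})\,v_i=\Lnorm{v_i}{\Omega}^2$ and $\signorm{\bta^{(i)}}\leq C_{sup}\Lnorm{v_i}{\Omega}$, where $\signorm{\cdot}$ is the scalar $\bSigma$-norm from the Poisson section and the penalty $\gamma|_e=1/\abs{e}$ is the same as in $a_h$.

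Next I would assemble the tensor $\widehat\bta\in\bSg_h$ whose $i$-th row is $\bta^{(i)}$, so that $\bdiv_h\widehat\bta=(\div_h\bta^{(1)},\div_h\bta^{(2)})$ and hence
\[
\int_\Omega\bdiv_h(\widehat\bta)\cdot\bfv=\sum_{i=1}^2\int_\Omega\div_h(\bta^{(i)})\,v_i=\sum_{i=1}^2\Lnorm{v_i}{\Omega}^2=\Lnorm{\bfv}{\Omega}^2.
\]
For the norm I would use that the three tensorial quantities decouple over rows: pointwise $\abs{\jump{\widehat\bta}}^2=\sum_i\jump{\bta^{(i)}}^2$, the divergence gives $\Lnorm{\bdiv_h\widehat\bta}{\Omega}^2=\sum_i\Lnorm{\div_h\bta^{(i)}}{\Omega}^2$, and the deviator is controlled by $\bLnorm{\widehat\bta^d}{\Omega}\leq\Lnorm{\widehat\bta}{\Omega}$ since pointwise $\abs{\bta^d}^2=\abs{\bta}^2-\tfrac12(\tr\bta)^2\leq\abs{\bta}^2$. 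Comparing with the definition \eqref{defAnormStokes} this yields
\[
\bsignorm{\widehat\bta}^2\leq\frac1\nu\sum_{i=1}^2\signorm{\bta^{(i)}}^2\leq\frac{C_{sup}^2}{\nu}\sum_{i=1}^2\Lnorm{v_i}{\Omega}^2=\frac{C_{sup}^2}{\nu}\Lnorm{\bfv}{\Omega}^2.
\]

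Finally I would enforce the trace-free constraint without disturbing the estimate by setting $\tilde\bta\bydef\widehat\bta-\rho\I$ with $\rho\bydef\frac1{2\abs{\Omega}}\int_\Omega\tr(\widehat\bta)$, which gives $\int_\Omega\tr(\tilde\bta)=0$, i.e.\ $\tilde\bta\in\bSg_{h,0}$. Because $\I$ is a constant tensor one has $\bdiv(\rho\I)=\bzero$, $(\rho\I)^d=\bzero$, and $\jump{\rho\I}=\bzero$ on every interior edge (since $\bnu_T+\bnu_{T'}=\bzero$); hence the divergence pairing, the deviator term and the jump term are all unchanged, so $\int_\Omega\bdiv_h(\tilde\bta)\cdot\bfv=\Lnorm{\bfv}{\Omega}^2$ and $\bsignorm{\tilde\bta}=\bsignorm{\widehat\bta}\leq C_{sup}\nu^{-1/2}\Lnorm{\bfv}{\Omega}$, which is the claim. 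The only genuinely delicate estimate, namely the control of the normal jumps $\jump{\bta^{(i)}}$ by the $H^1$-size of $\bta^{(i)}$ via the Raviart--Thomas interpolant, is exactly what Lemma~\ref{lem_infsup2} already supplies; the present argument adds only the row-wise bookkeeping and the harmless subtraction of $\rho\I$, so I do not expect a new obstacle here.
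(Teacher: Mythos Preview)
Your argument is correct and matches the paper's own proof: the paper also reduces to Lemma~\ref{lem_infsup2} by treating the two rows (the cases $\bfv=(v_1,0)$ and $\bfv=(0,v_2)$) independently, and then passes to the trace-free part by observing $\bdiv_h\bta=\bdiv_h\bta_0$ and $\bsignorm{\bta}=\bsignorm{\bta_0}$, which is exactly your subtraction of $\rho\I$. You have simply written out in detail what the paper sketches in two lines.
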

\begin{proof}
	For $\tilde\bta\in\bSg_h,$ the proof is analogous to the one of Lemma~\ref{lem_infsup2}, 
	since we may consider the separate independent cases $\bfv = (v_1,0)$ and $\bfv= (0,v_2).$
	Then, considering $\bdiv_h\bta = \bdiv_h\bta_0$ and $\bsignorm{\bta}=\bsignorm{\bta_0}$ finishes the proof.
\end{proof}

\begin{theorem}\label{thm_iscStokes} 
Let $\onlyBnc$ be the   bilinearform in \eqref{defBformStokes}.
There exists a constant $\alpha>0,$ such that for all $(\bsi, u)\in\bSigma_h\times V_h$ we have
\begin{equation}
	 \alpha \normNC{(\bsi,\bfu)} \leq
	 \sup_{\bzero\neq(\bta,\bfv)\in\bSg_{h,0}\times \bV_h  }
  \frac{\Bnc{\bsi,\bfu}{\bta,\bfv}}{\normNC{(\bta,\bfv)}} \,.
  \label{eq_iscSystemStokes}
\end{equation}
\end{theorem}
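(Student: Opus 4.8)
The plan is to follow verbatim the structure of the proof of Theorem~\ref{thm_iscSistema}, replacing the Poisson inf-sup Lemma~\ref{lem_infsup2} by its Stokes analogue Lemma~\ref{lem_infsupStokes}, and carefully tracking the powers of $\nu$ that appear in $\bsignorm{\cdot}$ and in $\normNC{(\cdot,\cdot)}$. Fix a trial pair $(\bsi,\bfu)\in\bSg_{h,0}\times\bV_h$. The first step reproduces the coercive part: since $\bsi\in\bSg_{h,0}$ has piecewise linear components, $\bdiv_h\bsi$ is piecewise constant and hence $\tfrac1\nu\bdiv_h\bsi-\bfu\in\bV_h$ is an admissible test velocity. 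Testing with $(\bta,\bfv):=(\bsi,\;\tfrac1\nu\bdiv_h\bsi-\bfu)$ makes the $b_h$-cross terms in $\bfu$ cancel and yields exactly $\Bnc{\bsi,\bfu}{\bta,\bfv}=\bsignorm{\bsi}^2$. Note that the sign of the divergence contribution in $\bfv$ is opposite to the Poisson case because $\onlyBnc$ carries $+b_h$ rather than $-b$.

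Next, to recover control of $\Lnorm{\bfu}{\Omega}$, I invoke Lemma~\ref{lem_infsupStokes} to obtain $\tilde\bta\in\bSg_{h,0}$ with $b_h(\tilde\bta,\bfu)=\Lnorm{\bfu}{\Omega}^2$ and $\bsignorm{\tilde\bta}\le C_{sup}\nu^{-1/2}\Lnorm{\bfu}{\Omega}$. Testing with $(\delta\tilde\bta,\bzero)$ gives $\delta a_{h}(\bsi,\tilde\bta)+\delta\Lnorm{\bfu}{\Omega}^2$; bounding the first term by Cauchy--Schwarz in $a_{h}$ together with the $\nu^{-1/2}$ bound on $\bsignorm{\tilde\bta}$ and then Young's inequality produces $-\tfrac12\aDGnorm{\bsi}^2+\bigl(\delta-\tfrac{\delta^2C_{sup}^2}{2\nu}\bigr)\Lnorm{\bfu}{\Omega}^2$. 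The crucial choice is $\delta:=\nu/C_{sup}^2$, which makes the coefficient of $\Lnorm{\bfu}{\Omega}^2$ equal to $\tfrac\delta2=\tfrac{\nu}{2C_{sup}^2}$, i.e.\ proportional to the $\nu\Lnorm{\bfu}{\Omega}^2$ term in $\normNC{(\cdot,\cdot)}$. Adding the two tested identities with the combined test pair $(\bta_2,\bfv):=(\bsi+\delta\tilde\bta,\;\tfrac1\nu\bdiv_h\bsi-\bfu)$ then gives the lower bound $\Bnc{\bsi,\bfu}{\bta_2,\bfv}\ge\tfrac12\bsignorm{\bsi}^2+\tfrac1{2C_{sup}^2}\,\nu\Lnorm{\bfu}{\Omega}^2\ge C\normNC{(\bsi,\bfu)}^2$.

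It remains to bound the norm of the test pair by $\normNC{(\bsi,\bfu)}$. For the tensor part, $\bsignorm{\bsi+\delta\tilde\bta}\le\bsignorm{\bsi}+\delta\bsignorm{\tilde\bta}\le\bsignorm{\bsi}+C_{sup}^{-1}\nu^{1/2}\Lnorm{\bfu}{\Omega}$ after substituting $\delta=\nu/C_{sup}^2$; for the velocity part, $\nu^{1/2}\Lnorm{\tfrac1\nu\bdiv_h\bsi-\bfu}{\Omega}\le\nu^{-1/2}\Lnorm{\bdiv_h\bsi}{\Omega}+\nu^{1/2}\Lnorm{\bfu}{\Omega}$, whose first summand is dominated by $\bsignorm{\bsi}$. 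Both estimates are $\nu$-independent once the $\nu^{1/2}\Lnorm{\bfu}{\Omega}$ contributions are recognised as part of $\normNC{(\bsi,\bfu)}$, so $\normNC{(\bta_2,\bfv)}\le C\normNC{(\bsi,\bfu)}$, and combining with the lower bound yields \eqref{eq_iscSystemStokes}. There is no genuine obstacle here, since the argument is structurally identical to the Poisson case; the only real subtlety is the bookkeeping of the powers of $\nu$, where the $1/\nu$ weight on $\bdiv_h$ inside $\bsignorm{\cdot}$, the $\nu$ weight on the velocity inside $\normNC{(\cdot,\cdot)}$, and the $\nu^{-1/2}$ in Lemma~\ref{lem_infsupStokes} all conspire, via the single scaling $\delta=\nu/C_{sup}^2$, to make the final constant $\alpha$ independent of $\nu$.
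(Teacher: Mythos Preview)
Your proof is correct and follows essentially the same approach as the paper's own proof: the same test pair $(\bsi,\;\nu^{-1}\bdiv_h\bsi-\bfu)$ for the coercive part, the same use of Lemma~\ref{lem_infsupStokes} with the scaling $\delta=\nu/C_{sup}^2$, and the same combined test pair to conclude. Your treatment of the $\nu$-bookkeeping is in fact more explicit than the paper's, which simply notes that the final estimate is analogous to Theorem~\ref{thm_iscSistema}.
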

\begin{proof}
	We proceed  analogously to Theorem~\ref{thm_iscSistema}.
	Given $\bsi\in\bSg_{h,0}, \bfu\in\bV_h$ 
	and  bilinearform~\eqref{defBformStokes}
	we define 
$\,\bta \bydef \bsi\,$ and $\,\bfv \bydef -\bfu+ \nu^{-1}\bdiv_h(\bsi)\,$
	to obtain
	\[
			B_{nc}[(\bsi, \bfu), (\bta, \bfv)] =  \bsignorm{\bsi}^2\,.
	\]
	After that, $\tilde\bta$ from Lemma~\ref{lem_infsupStokes} with $\delta=\nu/C_{sup}^2,$ gives
\[
		b(\delta\tilde\bta, \bfu) = \delta \Lnorm{\bfu}{\Omega}^2
		\qand
		B_{nc}[(\bsi, \bfu), (\delta\tilde\bta, \boldsymbol0)] \geq -\frac12\aDGnorm{\bsi}^2 + \frac\delta2\Lnorm{\bfu}{\Omega}^2\,.
\]
Together, for $\bta_2 \bydef \bsi+\delta\tilde\bta$ and $\bfv$ as above  we get
\[
	B_{nc}[(\bsi, \bfu), (\bta_2, \bfv)] \geq \frac12\bsignorm{\bsi}^2 + \frac\delta2\Lnorm{\bfu}{\Omega}^2\,.
\]
Finally, since $\bsignorm{\delta\tilde\bta} \leq \delta C_{sup}\nu^{-1/2}\Lnorm{\bfu}{\Omega} = \delta^{1/2}\Lnorm{\bfu}{\Omega}$ the same arguments as in Theorem~\ref{thm_iscSistema} finish the proof.
\end{proof}

\medskip
\subsection{Stability, errors and convergence}
From now on  $(\bsi,\bfu)$ and $(\bsi_h,\bfu_h)$ will be the unique solutions of \eqref{eqStokes2} and \eqref{stokes-LDG-formT}, respectively.
As before, stability requires a bound on the consistency error.
Therefore, we begin providing quasi-consistency identities.
Taking the difference of \eqref{eqConsistencyStokes} and \eqref{stokes-LDG-formT1}, as well as, \eqref{stokes-eq2-weak2} and \eqref{stokes-LDG-formT2} we get:
\begin{subequations}
  \label{consistencyStokes}
\begin{align}
	\label{consistencyStokesA} 
		a_{h}(\bsi-\bsi_h,\bta) + b(\bta,\bfu-\bfu_h)&= \int_{\cE_I}\jump{\bta}\avg{\bfu} 
		&& \forall\; \bta\in  \bSg_{h,0}\,, 
		\\
	\label{consistencyStokesB}
 b(\bsi-\bsi_h,\bfv) &=0 && \forall\; \bfv\in \bV_h\,.
\end{align}%
\end{subequations}

Next we state a few estimates that are proven analogously to those for the Laplacian.
The techniques 
used in Lemma~\ref{lemConsistencyError}
yield the following  bound:
\begin{equation}
		\abs{\int_{\cE_I}\jump{\bta}\avg{\bfu} } \leq C_1h \,\nu^{1/2}\Hseminorm{\bfu}{\Omega} \aDGnorm{\bta} \quad \forall\; \bta\in  \bSg_{h,0}\,. \\
		\label{boundCEStokes}
\end{equation}
The equation \eqref{consistencyStokesB} and the fact that  $\bdiv\bsi_h\in\bV_h$  gives
\[
		\Lnorm{\bdiv\bsi_h}{T}\leq 
		\Lnorm{\bdiv\bsi}{T} =	\Lnorm{\bff}{T} 
		\qand
		\left.\bdiv_h\bsi_h\right|_T = \frac1{\abs{T}} \int_T \bdiv\bsi\,.
\]
Therfore, just as in Lemma~\ref{lem_ORC_divPoisson} we get
\[
\Lnorm{\bdiv(\bsi-\bsi_h)}{T} 
= \inf_{\bfv\in\bV_h} \Lnorm{\bfv - \bdiv\bsi}{T} 
\leq C h_T^s \norm{\bff}_{s,T}\,.
\]
Furthermore, \eqref{eq_iscSystemStokes}, \eqref{consistencyStokesA},  \eqref{consistencyStokesB} and \eqref{bound_continuityStokes}
give the stability bound:
\[
	\begin{aligned}
	\alpha\normNC{(\bsi_h,\bfu_h)} 
	&\leq \normNC{(\bsi,\bfu)} + C_1 h \,\nu^{1/2}\Hseminorm{\bfu}{\Omega}
	\\
	&\leq \normNC{(\bsi,\bfu)} + C_1 h \,\nu^{-1/2}\Lnorm{\bsi^d}{\Omega} &&\text{by \eqref{eqStokes2}}
	\\
	&\leq C \normNC{(\bsi,\bfu)} 
	.
	\end{aligned}
\]

We have in included the viscosity parameter $\nu$ in the energy norm, so that
constants in existence and uniqueness (Theorem \ref{thm_iscStokes}), in the previous result, as well as, 
in the following theorem are independent of $\nu.$
The dependence of the error estimate on $\nu$ is common in many methods but frequently  hidden in constants.

\begin{theorem}\label{thmSmoothStokes}
		Let $\bsi\in H^2(\Omega)^{2\times2}$ and $\bfu\in H^1(\Omega)^2$ be the solution of \eqref{eqStokes2}
	and $(\bsi_h, u_h)\in\bSg_h\times \bV_h$ be the solution of \eqref{stokes-LDG-formT}.
	Then, 
	\[
			\normNC{(\bsi-\bsi_h,\bfu-\bfu_h)} \leq C \,h 
			\left( \nu^{-1/2}\norm{\bsi}_{2,\Omega} + \nu^{1/2}\Hseminorm{\bfu}{\Omega}\right)   \,,
	\]
	where $C>0$ is a constant independent of data and $h,  \bsi$ and $\bfu.$
\end{theorem}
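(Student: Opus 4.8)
The plan is to follow the same split-and-estimate strategy used in the proof of Theorem~\ref{thmSmoothConvergence} for the Poisson problem. First I would decompose the error into a discrete part and a projection part by introducing projections $\Pi_\sigma\bsi$ and $\Pi\bfu$, writing $\bsi_h-\bsi = e_h^{\bsi}+e^{\bsi}$ and $\bfu_h-\bfu = e_h^{\bfu}+e^{\bfu}$, where $\Pi$ is the componentwise $L^2$-projection into $\bV_h$ and $\Pi_\sigma$ is the nodal interpolant into the piecewise-linear continuous tensors contained in $\bSg_{h,0}$ (choosing the interpolant so that $\int_\Omega\tr(\Pi_\sigma\bsi)=0$, which is possible since $\Pi_\sigma$ preserves the trace integral on each triangle via a property analogous to~\eqref{eqPiCRdivergence}). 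The triangle inequality for $\normNC{(\cdot,\cdot)}$ then reduces the task to bounding the discrete error and the projection error separately.

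For the discrete error, I would invoke the inf-sup condition of Theorem~\ref{thm_iscStokes} together with the quasi-consistency identity~\eqref{consistencyStokes}, exactly paralleling the Poisson argument. This gives
\[
	\alpha\normNC{(e_h^{\bsi},e_h^{\bfu})}
	\leq \normNC{(\bsi-\Pi_\sigma\bsi,\bfu-\Pi\bfu)} + C_1 h\,\nu^{1/2}\Hseminorm{\bfu}{\Omega}\,,
\]
where the consistency term is controlled by~\eqref{boundCEStokes}. Thus everything is again reduced to bounding the projection error $\normNC{(\bsi-\Pi_\sigma\bsi,\bfu-\Pi\bfu)}$ in the method's natural norm.

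Next I would estimate each piece of $\normNC{(\cdot,\cdot)}$ on the projection error. The velocity part $\nu^{1/2}\Lnorm{\bfu-\Pi\bfu}{\Omega}$ is handled by the Poincar\'e/$L^2$-projection estimate on each triangle, yielding the bound $\nu^{1/2}h\,\Hseminorm{\bfu}{\Omega}/\pi$. For the tensor part, $\nu^{-1/2}\bLnorm{(\bsi-\Pi_\sigma\bsi)^d}{\Omega}$ and $\nu^{-1/2}\Lnorm{\bdiv_h(\bsi-\Pi_\sigma\bsi)}{\Omega}$ are bounded by standard componentwise interpolation estimates for the continuous piecewise-linear nodal interpolant, each giving $C h\,\nu^{-1/2}\norm{\bsi}_{2,\Omega}$ (using that the deviator is a bounded linear operation). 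Crucially, since $\bsi\in H_0\subset H(\bdiv,\Omega)$ has $\jump{\bsi}=\boldsymbol0$ and the continuous interpolant satisfies $\jump{\Pi_\sigma\bsi}=\boldsymbol0$ pointwise on interior edges, the jump contribution $\nu^{-1/2}\bLnorm{\gamma^{1/2}\jump{\Pi_\sigma\bsi}}{\cE_I}$ vanishes. Collecting these gives the claimed rate.

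The main obstacle I anticipate is not the estimation itself, which is routine, but ensuring the interpolant $\Pi_\sigma$ lands in the correct trace-free space $\bSg_{h,0}$ rather than merely in $\bSg_h$; one must verify that replacing a raw interpolant by its trace-free correction does not spoil the interpolation rates, or alternatively argue via Lemma~\ref{lem_equivalence} so that the trace constraint can be relaxed. A secondary subtlety is keeping the $\nu$-scalings consistent throughout so that, after combining the discrete and projection bounds, the factors of $\nu^{\pm1/2}$ assemble into precisely $\nu^{-1/2}\norm{\bsi}_{2,\Omega}+\nu^{1/2}\Hseminorm{\bfu}{\Omega}$ with a $\nu$-independent constant, as promised.
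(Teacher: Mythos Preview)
Your proposal is essentially the paper's own argument: split into discrete and projection errors, apply the inf-sup condition of Theorem~\ref{thm_iscStokes} together with the quasi-consistency~\eqref{consistencyStokes} and the bound~\eqref{boundCEStokes}, then estimate the projection error term by term exactly as in Theorem~\ref{thmSmoothConvergence}. The only cosmetic difference is that the paper names the Crouzeix--Raviart projection for $\Pi_\sigma$ while you use the continuous nodal interpolant; both land in $\bSg_h$ and give the same $O(h)$ rates for $\bsi\in H^2$, and both make the interior jumps vanish.

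One small correction: your justification that $\Pi_\sigma$ preserves $\int_\Omega\tr(\cdot)$ ``via a property analogous to~\eqref{eqPiCRdivergence}'' is not right---\eqref{eqPiCRdivergence} concerns the divergence integral, not the cell-wise average of a scalar component, and neither the nodal nor the CR interpolant preserves $\int_T w$ in general. The clean fix is the one you hint at: replace $\Pi_\sigma\bsi$ by $\Pi_\sigma\bsi - c\I$ with $c=\tfrac{1}{2|\Omega|}\int_\Omega\tr(\Pi_\sigma\bsi)$. Since $(c\I)^d=\bzero$, $\bdiv(c\I)=\bzero$, and $\jump{c\I}=\bzero$, the $\bsignorm{\cdot}$-norm of the projection error is unchanged, so all your estimates go through verbatim and the constant stays independent of $\nu$.
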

\begin{proof}
		The arguments are analoguous to Theorem~\ref{thmSmoothConvergence},
		\emph{i.e.},
		 we split the error into a discrete error and a projection error:
		\[
				\begin{aligned}
				\bsi_h-\bsi &= e_h^{\bsi} + e^{\bsi} 
				\quad\text{with}\quad
				e_h^{\bsi} \bydef  \bsi_h - \Pi_{\sigma} \bsi 
				\quad\text{and}\quad
				e^{\bsi} \bydef \Pi_{\sigma} \bsi - \bsi \,, \\
				u_h-u &= e_h^{u} + e^{u} 
				\quad\text{with}\quad
				e_h^{u} \bydef  \bfu_h - \Pi \bfu 
				\quad\text{and}\quad
				e^{u} \bydef \Pi \bfu - \bfu \,,
				\end{aligned}
		\]
		where  $\Pi$ is the $L^2$-projection into $V_h$ and $\Pi_\sigma$ is the Crouzeix-Raviart projection.
		Then, the norm allows to bound the errors separately as follows:
		\[
			  \normNC{(\bsi-\bsi_h,\bfu-\bfu_h)} \leq 
			  \normNC{(e_h^{\bsi},e_h^u)} +  \normNC{(e^{\bsi},e^u)} \,.
		\]
		We bound the discrete error (the left norm), followed by the projection error (the right norm).
		The consistency error identity \eqref{eqConsistencyStokes}
		and Theorem~\ref{thm_iscStokes} yield the existence of $\alpha>0,$ such that
\begin{align}
	\nonumber
	\alpha \normNC{(e_h^{\bsi}, e_h^u)} 
	&\leq \normNC{(\bsi-\Pi_\sigma\bsi,\bfu-\Pi \bfu)} + C_1 h \,\nu^{1/2}\Hseminorm{\bfu}{\Omega} \,,
\end{align}
where we bounded the consistency error using \eqref{boundCEStokes}.

From here on, 
each part of the norm $\normNC{\cdot}$ defined just after \eqref{defAnormStokes}
can be bounded as in Theorem~\ref{thmSmoothConvergence}
which ends the proof.
\end{proof}

In summary we have shown that the same analysis works for the Poisson and the Stokes problem both formulated in a dual mixed  non-conforming approach.

\medskip
\clearpage
\section{Numerical confirmation}\label{secNumerics}

In this section, we present numerical experiments that illustrate the performance 
of our  method   and confirm  the convergence rates for  
smooth solutions and 
exhibit  optimal convergence rates for non-smooth solutions that are not supported by our theory.
The numerical experiments were performed with the finite element toolbox ALBERTA using refinement by recursive bisection \cite{Alberta}. 
The solutions of the corresponding linear systems were computed using the backslash operator of MATLAB. 

In all cases, we start with an initial mesh $\mathcal{T}_0,$ and in each step we solve the corresponding problem, compute the errors and  refine uniformly to generate a conforming refinement $\mathcal{T}_{k +1}$ of $\mathcal{T}_k$ by bisecting all elements twice. 
In what follows, $\mbox{DOFs}_k$ denotes the total number of degrees of freedom (unknowns) of the corresponding
system on $\mathcal{T}_k.$
Since we only use uniform meshes we have $\mbox{DOFs}^{-r/2} \sim h^r.$
Hence, we calculate the empirical order of convergence $(\mbox{EOC}),$ associated to some global error $e_k$ in step $k,$ by
\[
	\text{EOC}(e_k) \bydef -\frac{2\, \log(e_k/e_{k-1})}{\log(\text{DOFs}_k/\text{DOFs}_{k-1})}, \qquad k>0. 
\]

In order to impose the continuity of the normal trace across inter-element edges, we always choose the  jump parameter $\left.\gamma\right|_e = \abs{e}^{-1}.$
We present three numerical examples for each of the analyzed  models (Poisson and Stokes). 
The first one exhibits a smooth behavior, completely supported by our theory.
But, since the divergence error equals the best-approximation error for any regularity,
we also analyze  the performance of the methods 
for  a singular solution on an M-shaped domain
and on a crack domain (opening angle zero)
where the crack is resolved by the mesh and where $\bsi\notin H^s(\Omega),\, s\geq1/2.$
The  used opening angle of the crack is zero, but Figure~\ref{figInitialMeshes} shows an open crack to illustrate the ``additional'' DOFs on the ``overlapping edges''.

\begin{figure}[hbt]
	\centering
	\includegraphics[scale=0.875]{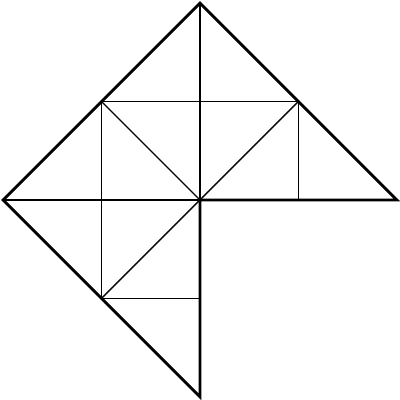}
	\qquad
	\includegraphics[scale=0.875]{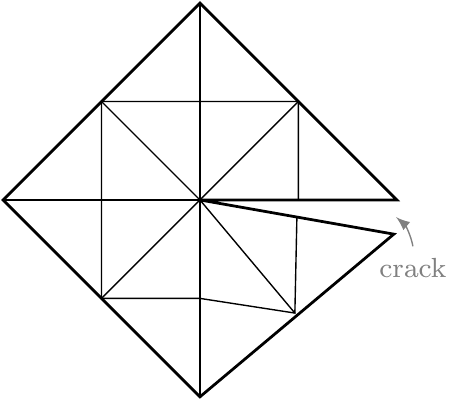}
	\caption{Initial meshes for problems on M-shaped domains and the crack problem (real opening angle is zero).}
	\label{figInitialMeshes}
\end{figure}

Note that  asymptotically  33\%  of the DOFs for $\bsi_h$ are fixed by the best approximation of  $\div \bsi.$
This is quite a big percentage, so we analysed the perfomance of the method for the non-smooth cases 
to see whether the other DOFs are also close to an optimal converging approximation, as the 
smooth convergence result proves.
It turns out, that for all cases we obtain optimal convergence (a result left open to be proven as part of future work).

\clearpage
\subsection{Poisson problems}
\subsubsection{P1. A smooth test solution}
In this first test, we  consider a smooth solution on an M-shaped domain with Dirichlet boundary conditions given by:
\begin{equation}
\begin{split}
		u &= \exp\big(-10(x^2+y^2)\big)
		\;,\quad
		f= -\Delta u
		\;,\quad
		g = \left.u\right|_{\partial\Omega} \,,
		\\
\Omega&=  \{|x| + |y|<1\} \cap \{x<0 \mbox{ or } y>0 \}\,.
		\label{eqTestProblem1}
\end{split}
\end{equation}
The objective of this example is to confirm that the analyzed scheme provides
the optimal rate of convergence.
	The initial mesh on the non-convex domain $\Omega$ is shown  in Figure~\ref{figInitialMeshes} (left).
Additionally, we report the convergence rates in Table~\ref{tabTestP1Laplace} which are in agreement with the theory.
The L2-error of $\bsi_h$ even converges faster
which is an advantage over the classic RT-element.
Finally, in Figure~\ref{figGradSigmaP1} we present the module of $\bsi_h$ and $u_h$ at iterations 2, 4 and 6, respectively. 
The pictures seem to indicate  that our method provides a field $\bsi_h$ with a continuous normal trace  across inter-element edges.

\begin{figure}[h!]
		\centering
		\includegraphics[scale=0.1]{./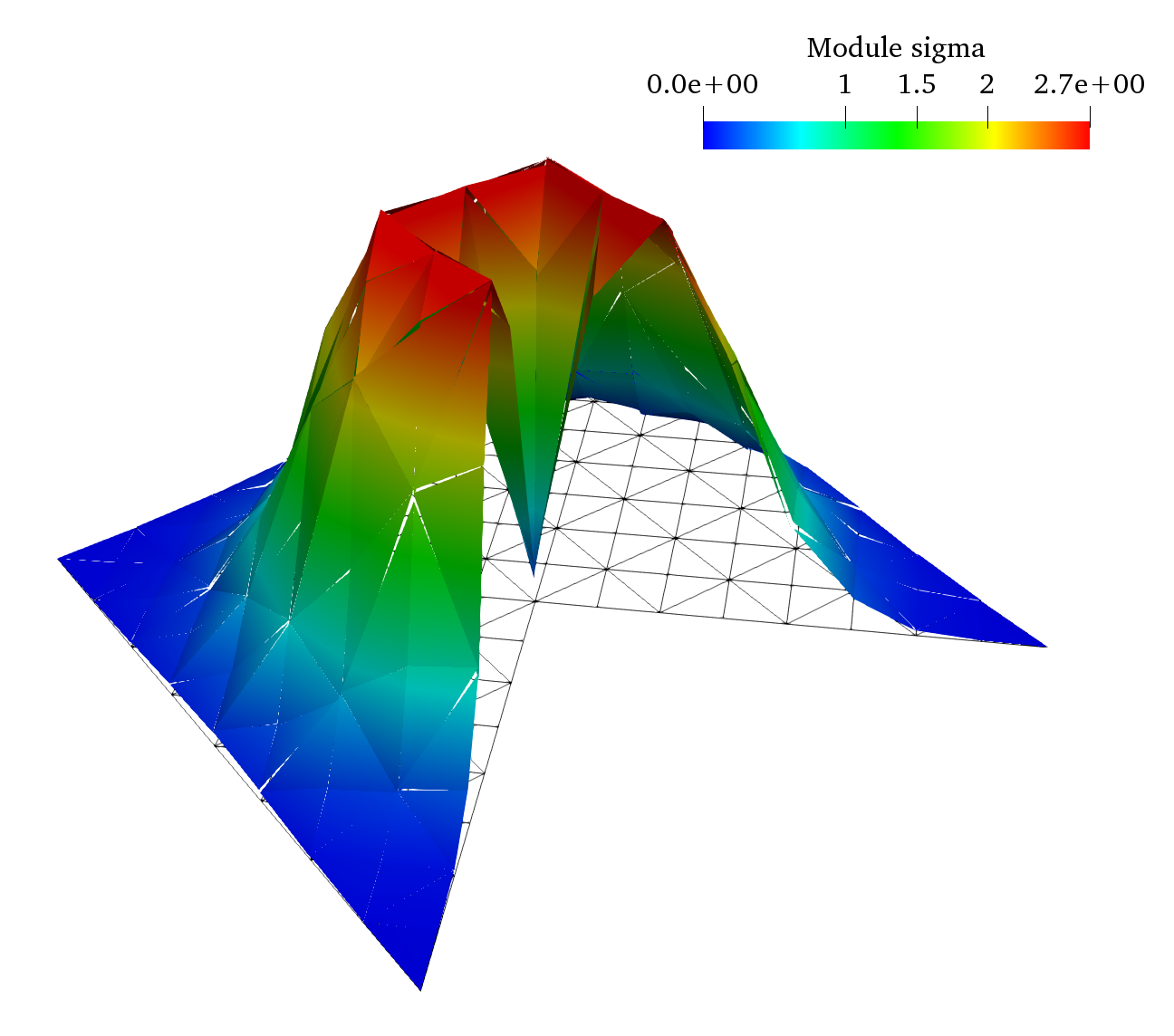}
		\;
		\includegraphics[scale=0.1]{./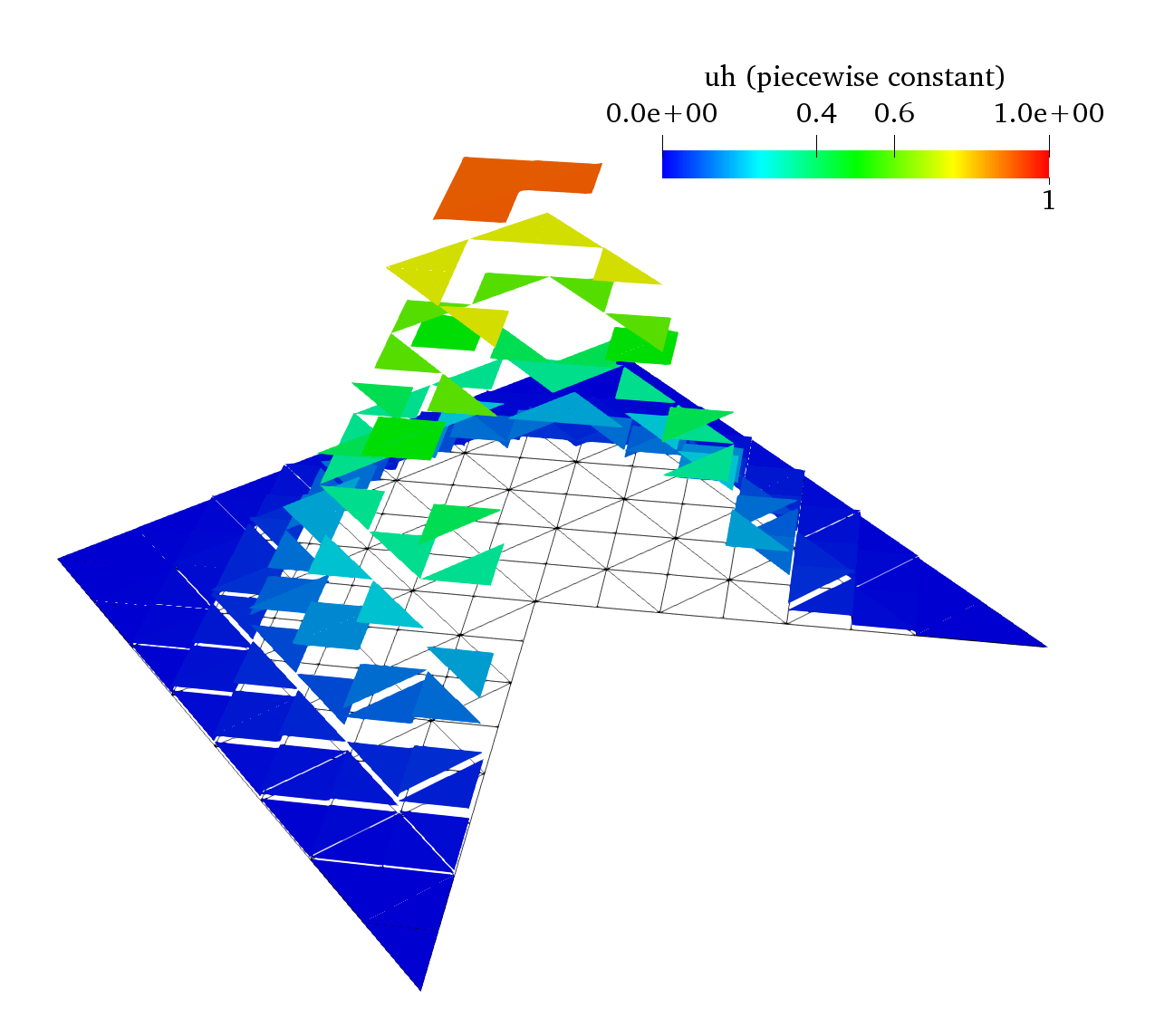}

		\includegraphics[scale=0.1]{./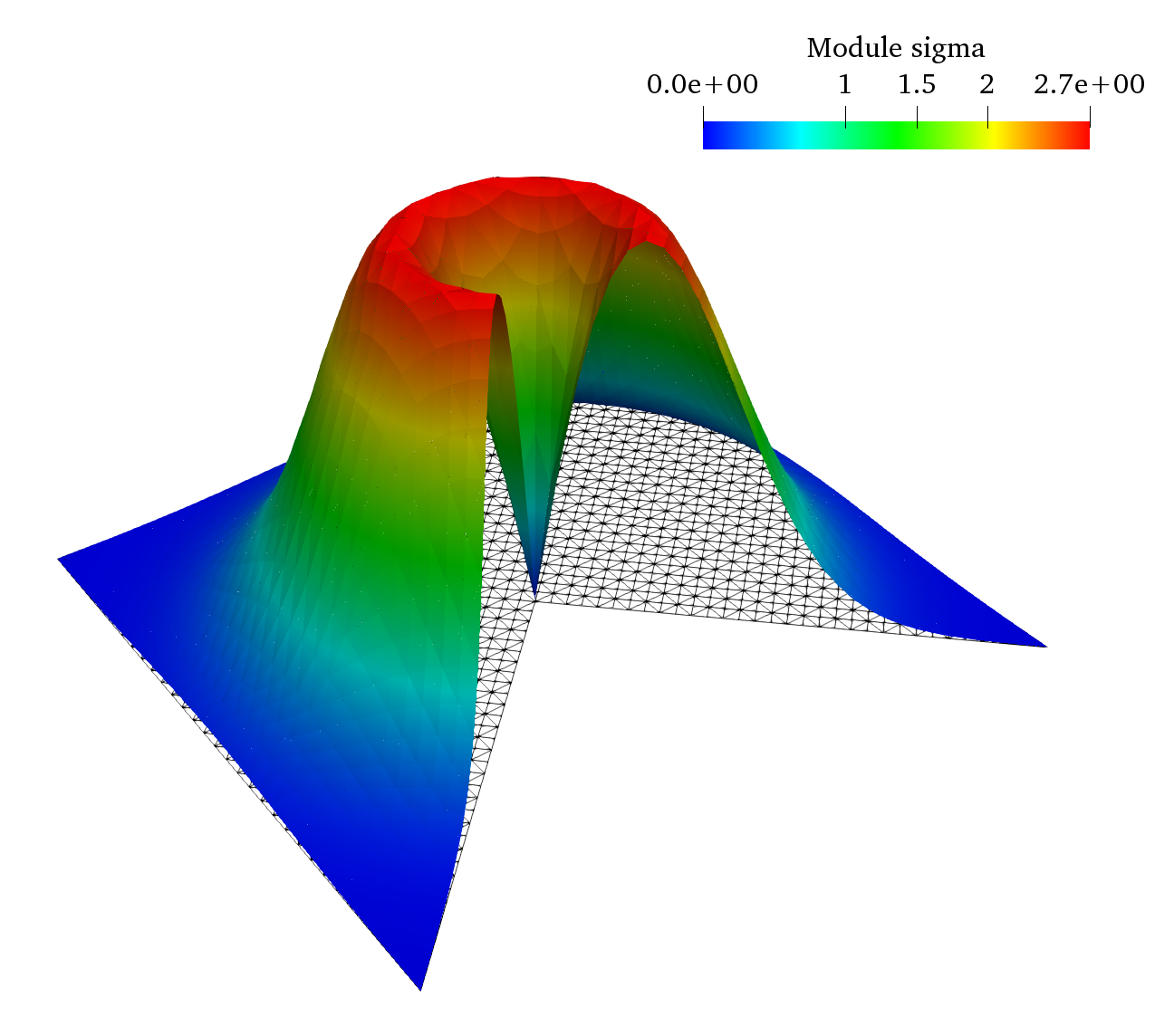}
		\;
		\includegraphics[scale=0.1]{./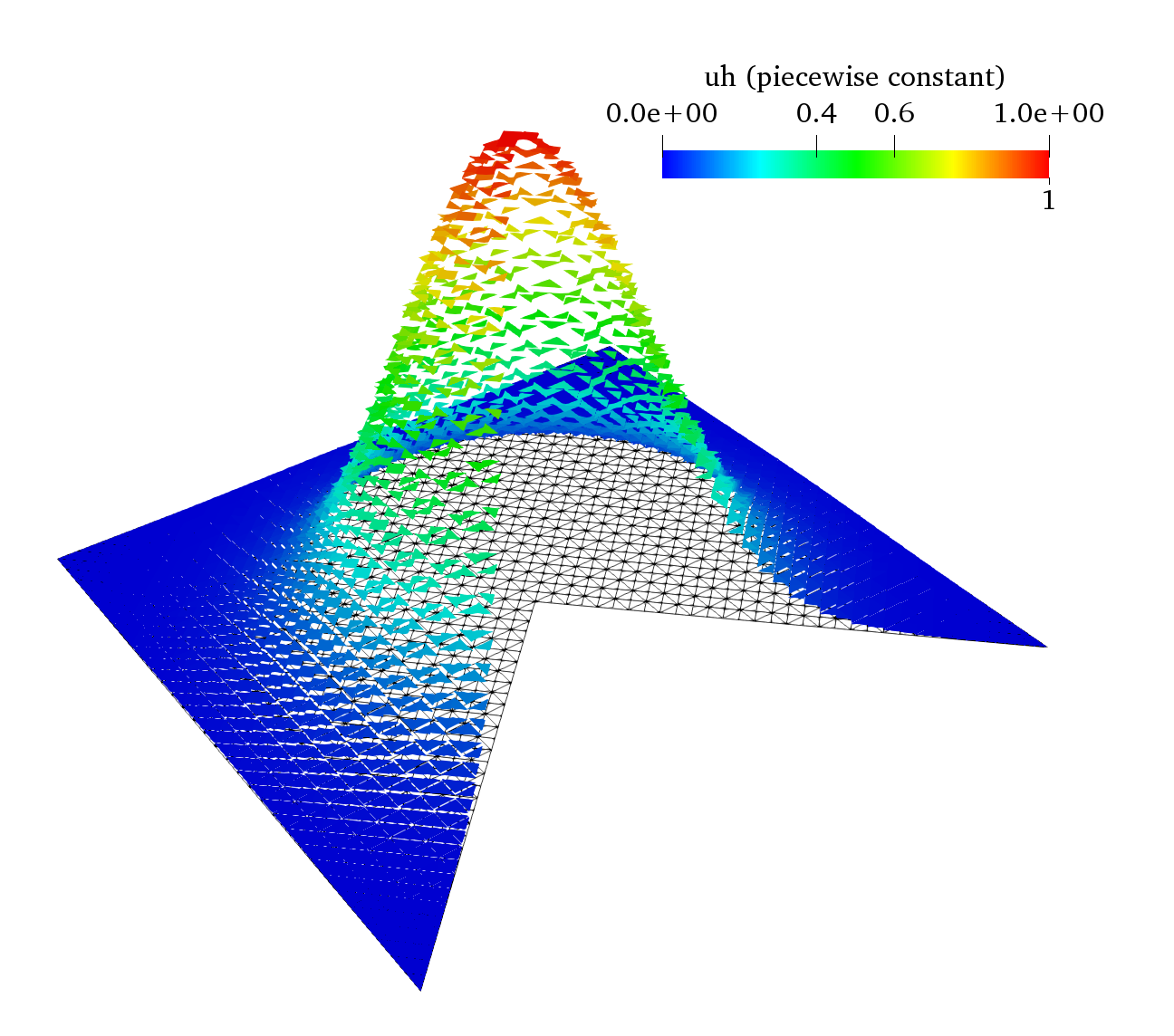}

		\includegraphics[scale=0.1]{./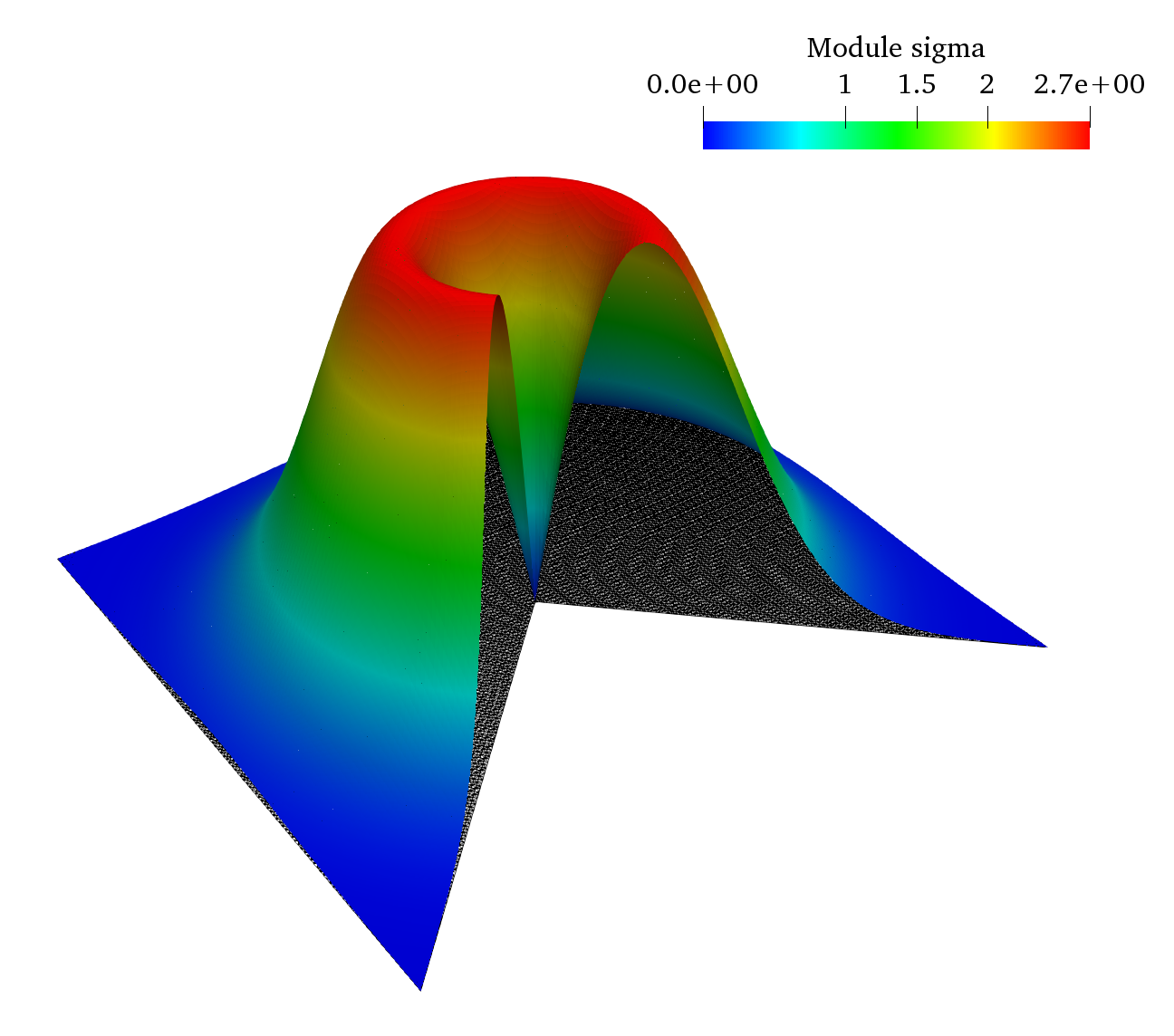}
		\;
		\includegraphics[scale=0.1]{./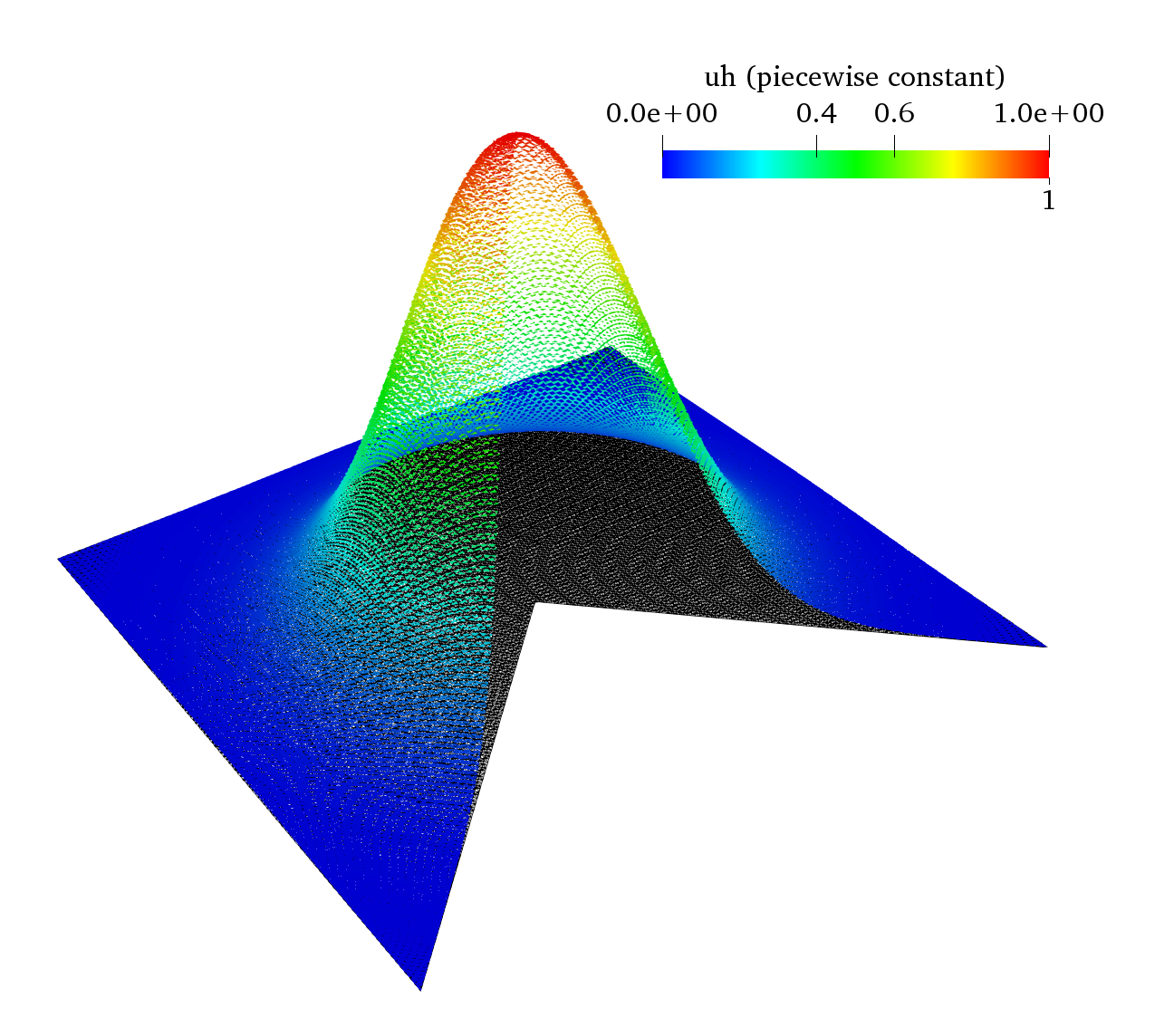}
\caption{P1. Module of $\bsi_h$ (left) and $u_h$ (right) at iterations 2, 4 and 6 (top to bottom).}
		\label{figGradSigmaP1}
\end{figure}

\begin{table}[hbt!]
\centering \scriptsize
\caption{Error behavior for P1 problem \eqref{eqTestProblem1}}%
\label{tabTestP1Laplace}%
\begin{tabular}{rrcccccccc}
\toprule
iter & DOFs 
& $\Lnorm{\bsi-\bsi_h}{\Omega}$ & EOC 
& $\Lnorm{\div(\bsi-\bsi_h)}{\Omega}$& EOC
& $\Lnorm{\gamma^{1/2} \jump{\bsi_h}}{\cE_I}$ & EOC 
& $\Lnorm{u-u_h}{\Omega}$ & EOC \\
\midrule
0  &       58   &          1.067 \hspace{6mm}  &--         &         9.484 \hspace{6mm} &--         &     2.532$\cdot10^{-1}$ & --                &          2.270$\cdot10^{-1}$  & -- \\  
1   &     212      &       3.708$\cdot10^{-1}$   &1.63   &            5.009 \hspace{6mm} & 0.98   &          1.664$\cdot10^{-1}$& 0.65             &            9.509$\cdot10^{-2}$  & 1.34\\ 
2   &     808     &       8.427$\cdot10^{-2}$   &2.21   &           2.175  \hspace{6mm} & 1.25     &        8.967$\cdot10^{-2}$  &0.92              &             4.517$\cdot10^{-2}$   &1.11 \\
3   &    3152    &        2.114$\cdot10^{-2}$   &2.03   &           1.102  \hspace{6mm} &1.00       &      4.693$\cdot10^{-2}$  &0.95                 &           2.261$\cdot10^{-2}$   &1.02 \\
4   &   12448   &        5.276$\cdot10^{-3}$  &2.02   &           5.528$\cdot10^{-1}$  &1.00       &      2.380$\cdot10^{-2}$  &0.99                 &            1.130$\cdot10^{-2}$   &1.01 \\
5   &   49472  &         1.318$\cdot10^{-3}$   &2.01  &            2.766$\cdot10^{-1}$  &1.00 &            1.195$\cdot10^{-2}$ & 1.00        &                   5.652$\cdot10^{-3}$&   1.00 \\
6   &  197248 &         3.295$\cdot10^{-4}$   &2.00  &            1.383$\cdot10^{-1}$&  1.00    &         5.987$\cdot10^{-3}$  &1.00     &                        2.826$\cdot10^{-3}$&   1.00 \\
7   &  787712&          8.239$\cdot10^{-5}$   &2.00  &              6.917$\cdot10^{-2}$  &1.00  &           2.995$\cdot10^{-3}$  &1.00   &                     1.413$\cdot10^{-3}$   & 1.00 \\
\bottomrule
\end{tabular}
\end{table}

\subsubsection{P2. M-Shaped domain}
Let $\Omega =\{|x| + |y|<1\} \cap \{x<0 \mbox{ or } y>0 \}$ be the domain shown in Figure~\ref{figInitialMeshes} (left), and consider the following solution of \eqref{model}:
\begin{equation}
		u(r,\theta)  = r^{\frac23} 
		\sin\left(\frac{2\theta}{3}\right) - \frac{r^2}{4},
		\qquad
		g= u_{|\partial \Omega}
		\label{eqP2}
\end{equation}
where $(r,\theta)$ denote polar coordinates. 
Note that $u$ is the solution of the elliptic equation $-\Delta u =1$ in $\Omega,$ and that  $u\in H^{5/3}(\Omega)$ (we ignore the $-\varepsilon$).
Figure~\ref{figUSP2} shows the discrete solution $u_h$ (piecewise constant) and the module of $\bsi_h$ (module of a CR field) at iteration 6, respectively. 
We appreciate some small oscillations in the second picture, due to the singularity, but this does not  affect the rates reported in Table~\ref{tabTestP2Laplace}.
We omit the $\div_h(\bsi - \bsi_h)$ error, because it is almost zero ($f=1$ is constant).
Since  $\bsi=\nabla u\in H^{2/3}(\Omega)$ at the reentrant corner,  we expect (and see) the rate $h^{2/3} \sim \mbox{DOFs}^{-1/3}$ for the $L_2$-error of $\bsi.$
The jump terms converge faster and the $L_2$-error of $u$ converges with the optimal rate $h \sim \mbox{DOFs}^{-1/2}.$

\begin{table}[ht!]
\centering
\caption{Error behavior for P2  problem \eqref{eqP2}}
\label{tabTestP2Laplace}
\scriptsize%
\begin{tabular}{rrcccccc}
\toprule
iter & DOFs 
& $\Lnorm{\bsi-\bsi_h}{\Omega}$ & EOC
& $\Lnorm{\gamma^{1/2}\jump{\bsi_h}}{\cE_I}$ & EOC 
& $\Lnorm{u-u_h}{\Omega}$ & EOC \\
\midrule
1 &       212 & 1.152$\cdot10^{-1}$ &  0.68 &   1.033$\cdot10^{-1}$  &0.66   & 5.465$\cdot10^{-2}$ &  1.04\\
2 &       808 & 7.524$\cdot10^{-2}$ &  0.64   & 5.610$\cdot10^{-2}$  &0.91   & 2.750$\cdot10^{-2}$ &  1.03\\
3 &      3152 & 4.849$\cdot10^{-2}$ &  0.65   & 2.889$\cdot10^{-2}$  &0.97   & 1.377$\cdot10^{-2}$ &  1.02\\
4 &     12448 & 3.092$\cdot10^{-2}$ &  0.66 &  1.461$\cdot10^{-2}$  &0.99   & 6.886$\cdot10^{-3}$ &  1.01\\
5 &     49472 & 1.960$\cdot10^{-2}$ &  0.66  & 7.331$\cdot10^{-3}$  &1.00   & 3.442$\cdot10^{-3}$ &  1.01\\
6 &    197248 & 1.238$\cdot10^{-2}$  & 0.66 & 3.669$\cdot10^{-3}$  &1.00   & 1.721$\cdot10^{-3}$ &  1.00\\
7 &    787712 & 7.814$\cdot10^{-3}$ &  0.67 & 1.834$\cdot10^{-3}$  &1.00   & 8.601$\cdot10^{-4}$ &  1.00\\
\bottomrule
\end{tabular}
\end{table}

\begin{figure}[hbt!]
		\centering
		\includegraphics[scale=0.11]{./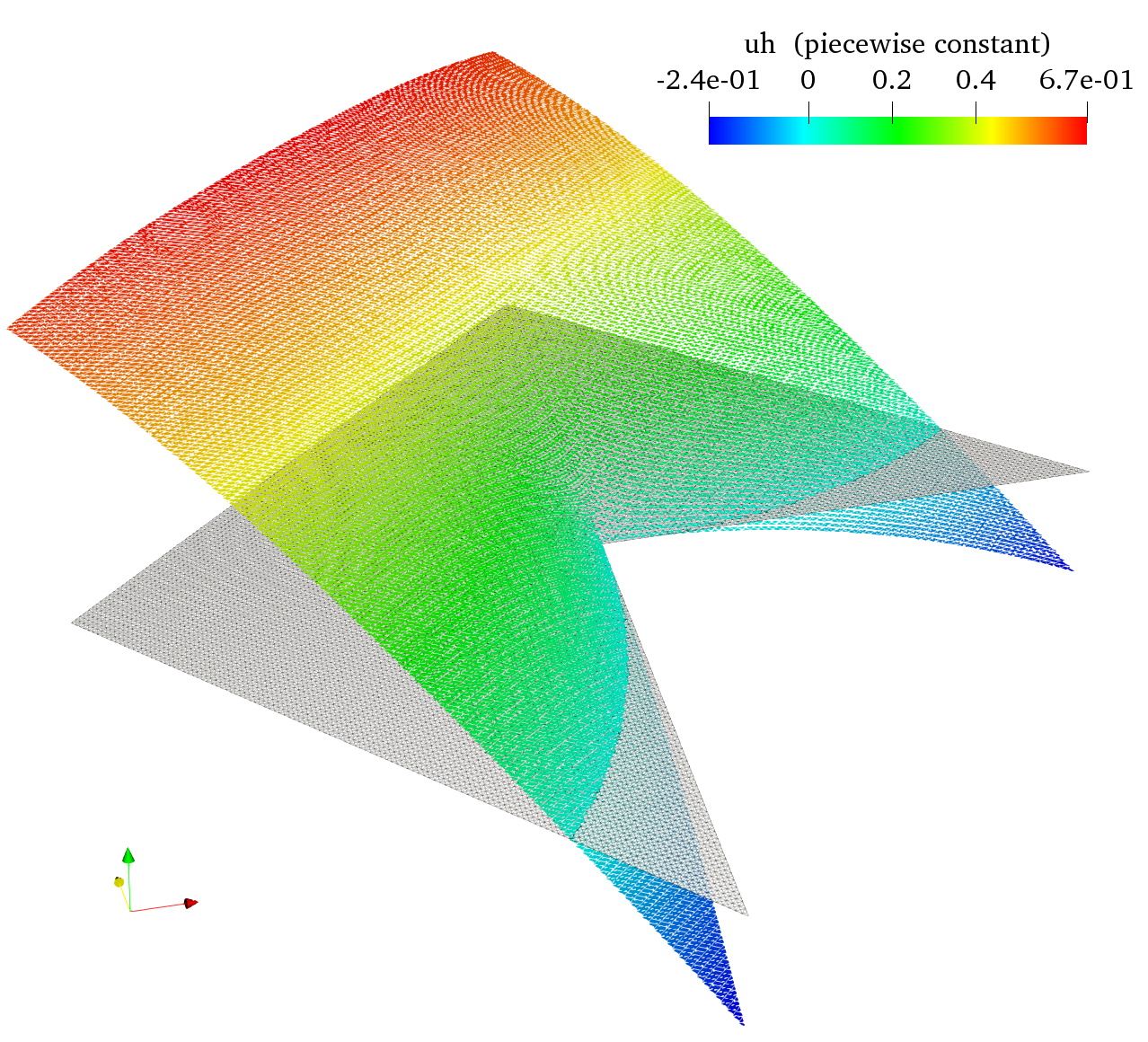}
		\qquad
		\includegraphics[scale=0.11]{./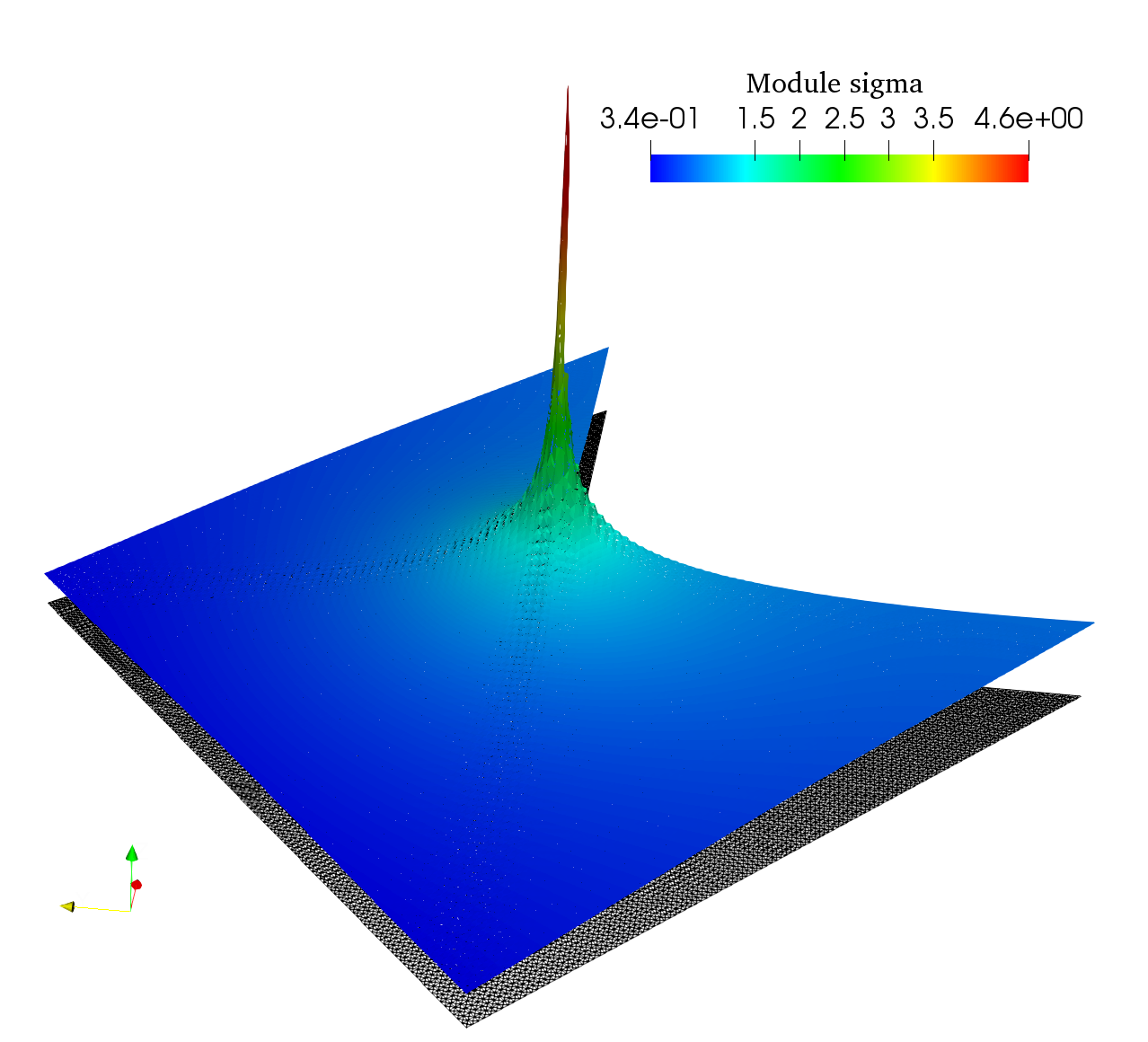}
\caption{P2. $u_h$ (left) and module of $\bsi_h$ (right) at iteration 6.}
		\label{figUSP2}
\end{figure}

\FloatBarrier
\subsubsection{P3. Crack domain} \label{S:P3}

Let $\Omega =\{|x| + |y|<1\} \setminus \{0 \leq x \leq 1, y = 0 \}$ be the crack domain shown in Figure~\ref{figInitialMeshes} (right) with opening angle zero, 
and consider the following singular solution of \eqref{model}:
\begin{equation}
		u(r,\theta)  = r^{\frac12} 
		\sin\left(\frac{\theta}{2}\right),
		\qquad
		g= u_{|\partial \Omega}
		\label{eqP3}
\end{equation}
where $(r,\theta)$ denote polar coordinates. 
Note that $u$ is the solution of the elliptic equation $\-\Delta u =0$ in $\Omega.$
The initial mesh is shown in Figure~\ref{figInitialMeshes} (right).
We emphasize that the DOFs on the line $(0,1) \times\set{0}$ are different.

The error behaviours for this test are reported in Table~\ref{tabTestP3Laplace}.
 We observe optimal rates for the error of  variable $u,$ and the expected rate for the error of $\bsi$ (according to the singularity).
Finally, in Figure~\ref{figUMP3} we show the discrete solution $u_h$ and the module of $\bsi_h$ at iteration  5.

\begin{table}[hbt!] \scriptsize
\centering
\caption{Error behavior for P3  problem \eqref{eqP3}}
\label{tabTestP3Laplace}
\begin{tabular}{rrcccccc}
\toprule
iter & DOFs 
& $\Lnorm{\bsi-\bsi_h}{\Omega}$ & EOC 
& $\Lnorm{\gamma^{1/2}\jump{\bsi_h}}{\cE_I}$ & EOC
& $\Lnorm{u-u_h}{\Omega}$ & EOC\\
\midrule
0 &       76 & 3.410$\cdot10^{-1}$ &   --  & 1.968$\cdot10^{-1}$  &  --  & 1.290$\cdot10^{-1}$ &   --\\
1 &      280 & 2.648$\cdot10^{-1}$ &  0.39  & 1.353$\cdot10^{-1}$  & 0.57  & 6.817$\cdot10^{-2}$ &  0.98\\
2 &     1072 & 2.078$\cdot10^{-1}$ &  0.36  & 8.060$\cdot10^{-2}$  & 0.77  & 3.561$\cdot10^{-2}$ &  0.97\\
3 &     4192 & 1.581$\cdot10^{-1}$ &  0.40 & 4.592$\cdot10^{-2}$  & 0.82  & 1.833$\cdot10^{-2}$ &  0.97\\
4 &    16576 & 1.176$\cdot10^{-1}$ &  0.43  & 2.602$\cdot10^{-2}$  & 0.83  & 9.344$\cdot10^{-3}$ &  0.98\\
5 &    65920 & 8.598$\cdot10^{-2}$ &  0.45  & 1.482$\cdot10^{-2}$  & 0.82  & 4.734$\cdot10^{-3}$ &  0.98\\
6 &   262912 & 6.222$\cdot10^{-2}$ &  0.47 &   8.505$\cdot10^{-3}$  & 0.80  & 2.389$\cdot10^{-3}$ &  0.99\\
7 &  1050112 & 4.470$\cdot10^{-2}$ &  0.48 &    4.921$\cdot10^{-3}$  & 0.79  & 1.202$\cdot10^{-3}$ &  0.99\\
\bottomrule
\end{tabular}
\end{table}

\begin{figure}[hbt!]
		\centering
		\includegraphics[scale=0.11]{./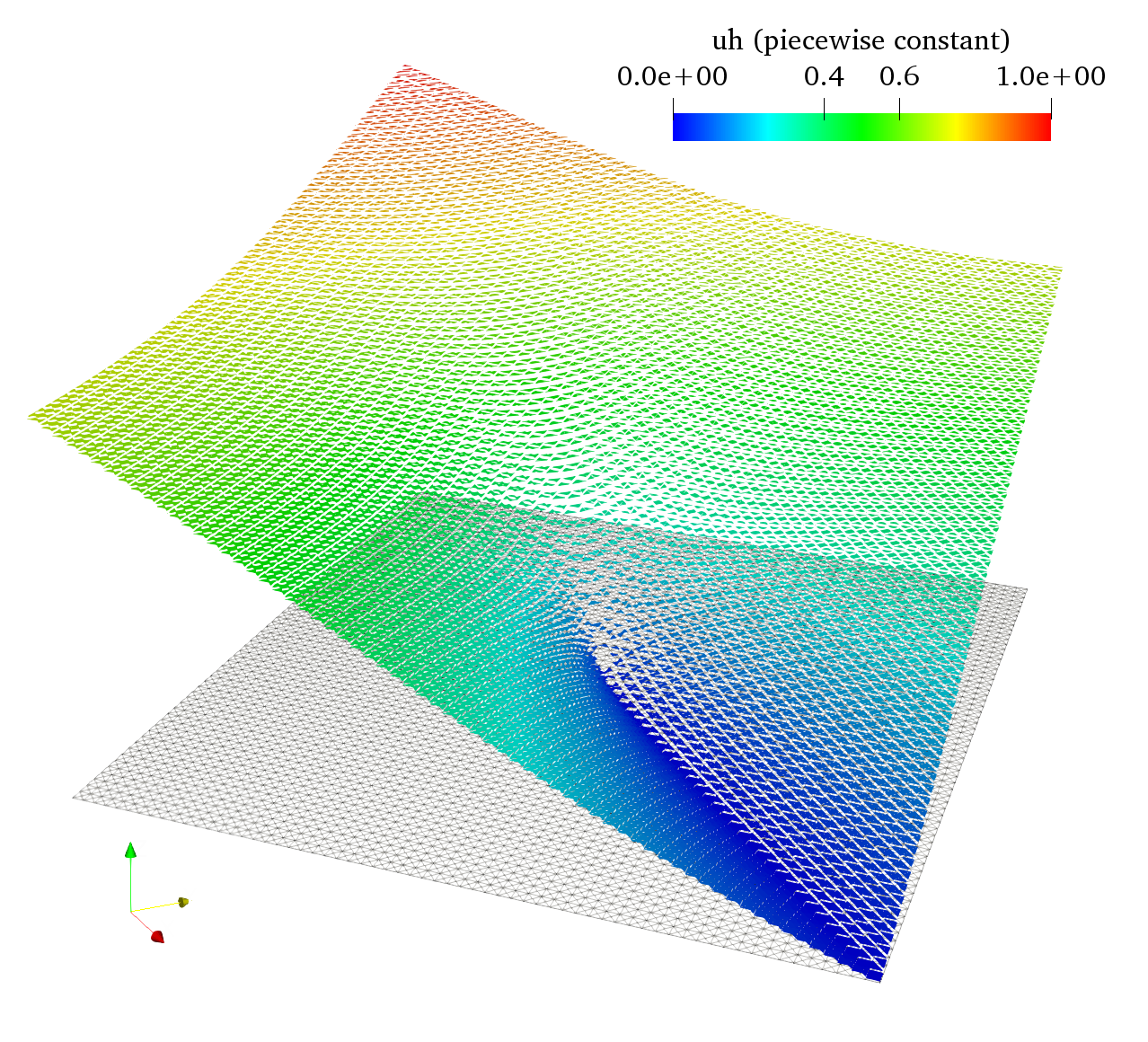}
		\qquad
		\includegraphics[scale=0.11]{./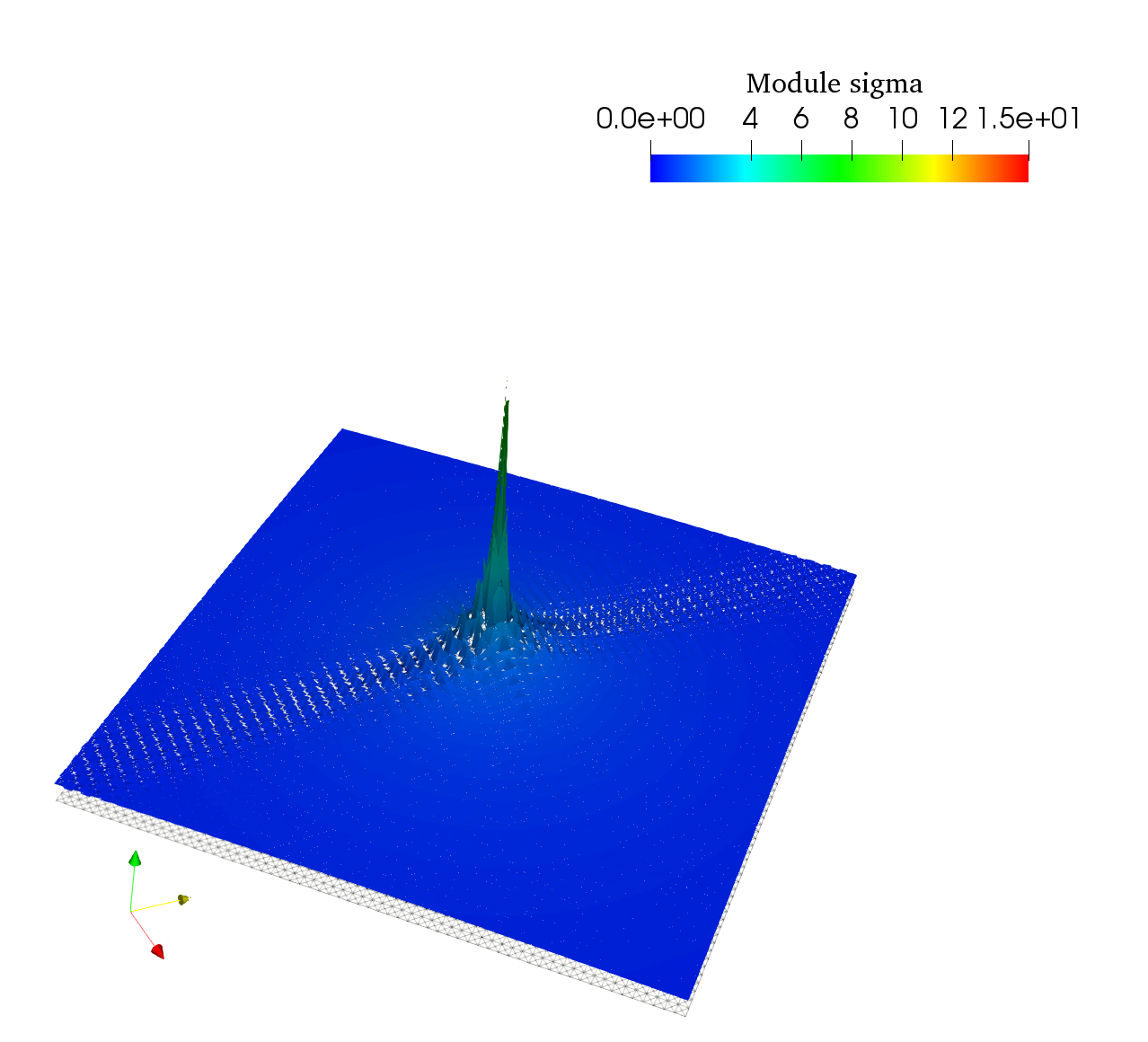}
		\caption{P3. $u_h$ (left) and module of $\bsi_h$ (right) at iteration 5.}
		\label{figUMP3}
\end{figure}

\FloatBarrier
\subsection{Stokes problem}
In order to impose the  zero mean value condition for the trace of functions in the spaces $\bSg_{h,0},$  we consider the equivalent problem \eqref{eq_practicalScheme}, that was introduced in Section
\ref{S:Theorical_Practical_Scheme}.
We recall that from definition of pseudostress $\bsi,$ the pressure can be recovered as:
$p = -\frac{1}{2} \tr(\bsi).$
This identity allows us to analyze the pressure error, since
\[
	\| p - p_h \|_{0,\Omega } = \frac{1}{2} \|\tr{\bsi} - \tr{\bsi}_h \|_{0,\Omega }
	\leq \frac{\sqrt{2}}{2} \| \bsi  - \bsi_h \|_{0,\Omega }\,.
\]

\subsubsection{S1. Smooth solution} 
This example is motivated by the two-dimensional analytical solution of the
Navier-Stokes equations derived by Kovasznay in \cite{Kovasznay1948}, where
we consider $\Omega = \big(-\frac12,\frac32\big)\times (0,2)$ and:
\begin{equation}\label{eqS1}
	\boldsymbol{u}(x,y) = \left( 
\begin{array}{c}
1 - e^{\lambda x} \cos ( 2 \pi y) \\ \frac{\lambda}{2\pi} e^{\lambda x} \sin (2 \pi y)
\end{array}
\right), 
\quad
p(x,y) = -\frac{1}{2} e^{2\lambda x} - p_0,  
\end{equation}
with the parameter $\lambda$ given by:
$\lambda = - \displaystyle\frac{8 \pi^2}{ \nu^{-1} + \sqrt{\nu^{-2} + 16 \pi^2}}.$
In order to test the robustness of our scheme, we solve the problem for different values of 
the viscosity, ranging from $1$ to $10^{-5}.$
Figure~\ref{figS1_EOC} shows 
the decay of the errors versus the DOFs. 
The convergence rates 
confirm
the theory (Theorem \ref{thmSmoothStokes}) in all cases, that is, independent of the value of the viscosity we at least have linear convergence. 
We remark that the pressure errors decay faster with rate $h^2 = \mbox{DOFs}^{-1}.$

\begin{figure}[hbt!]
		\centering
		\includegraphics[scale=0.3]{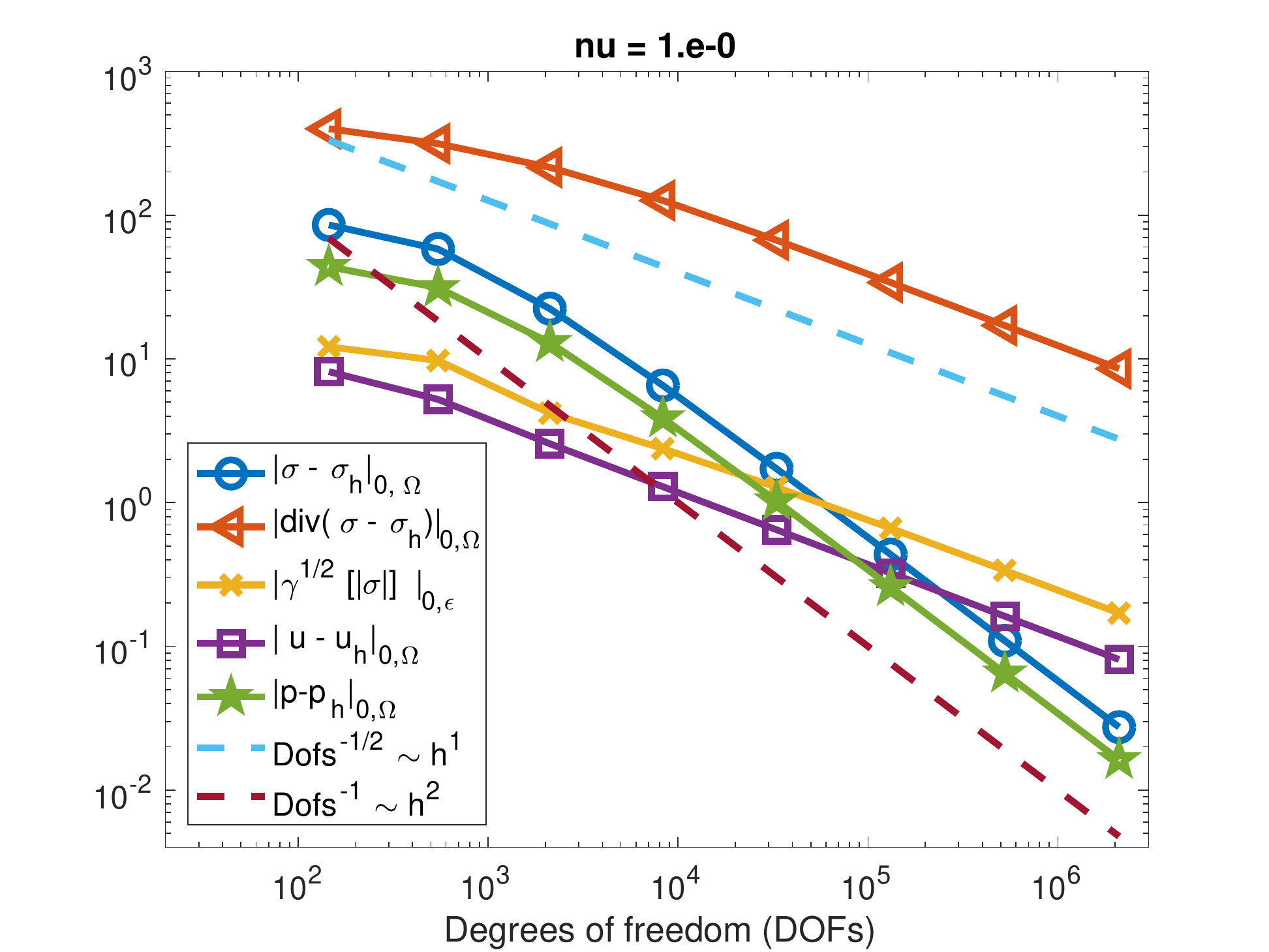}
		\quad
		\includegraphics[scale=0.3]{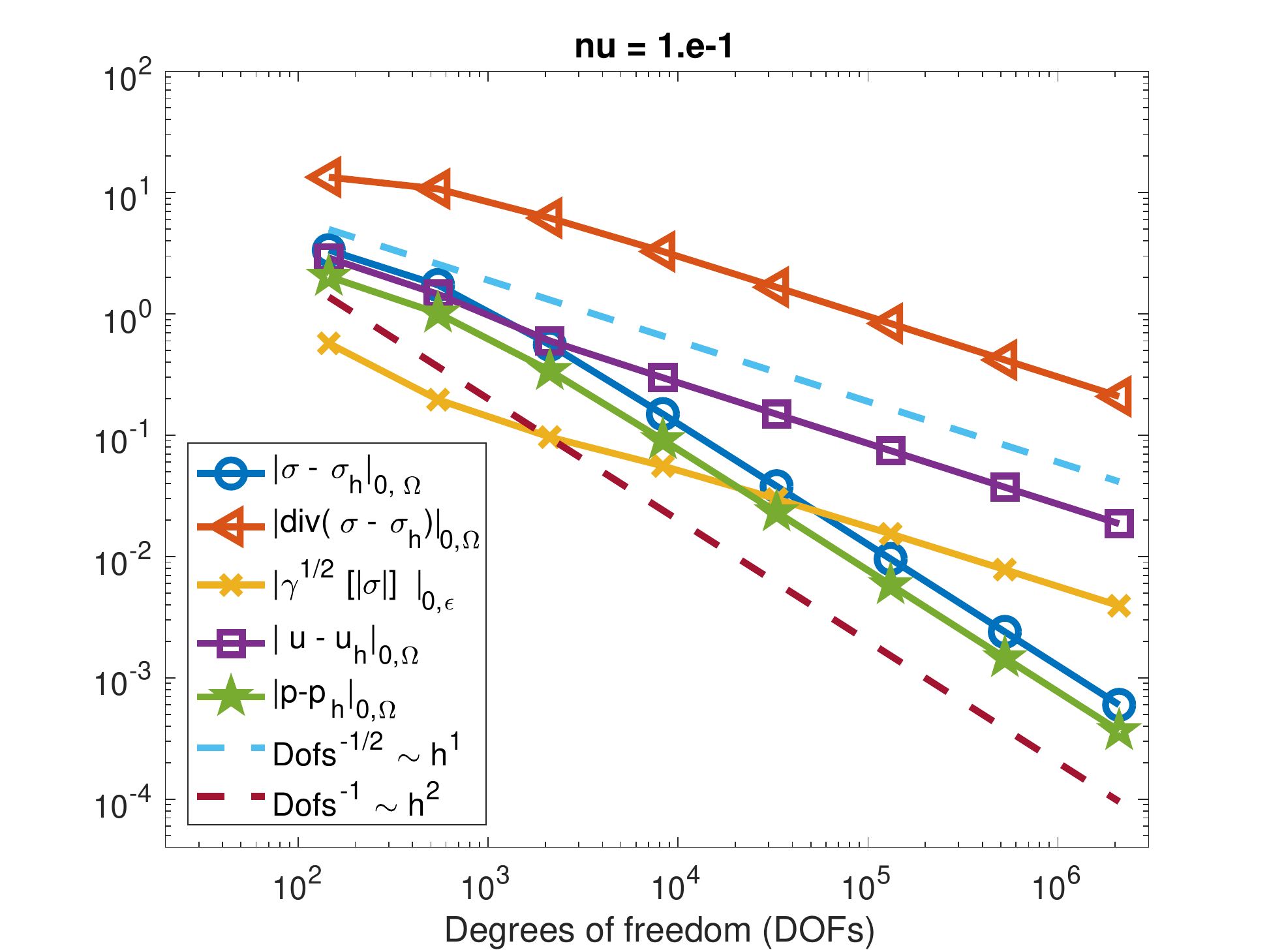}
		
		\includegraphics[scale=0.3]{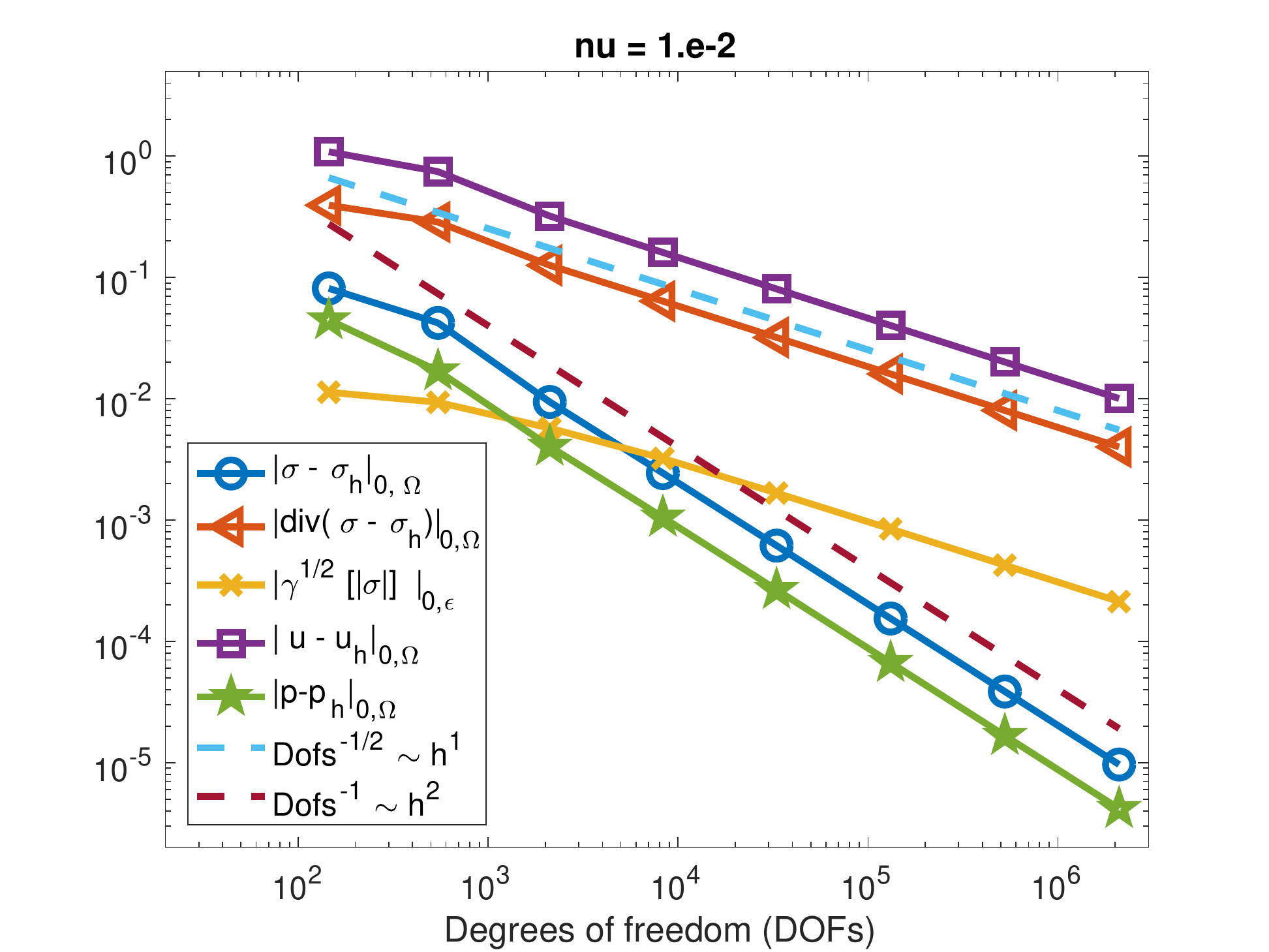}
		\quad
		\includegraphics[scale=0.3]{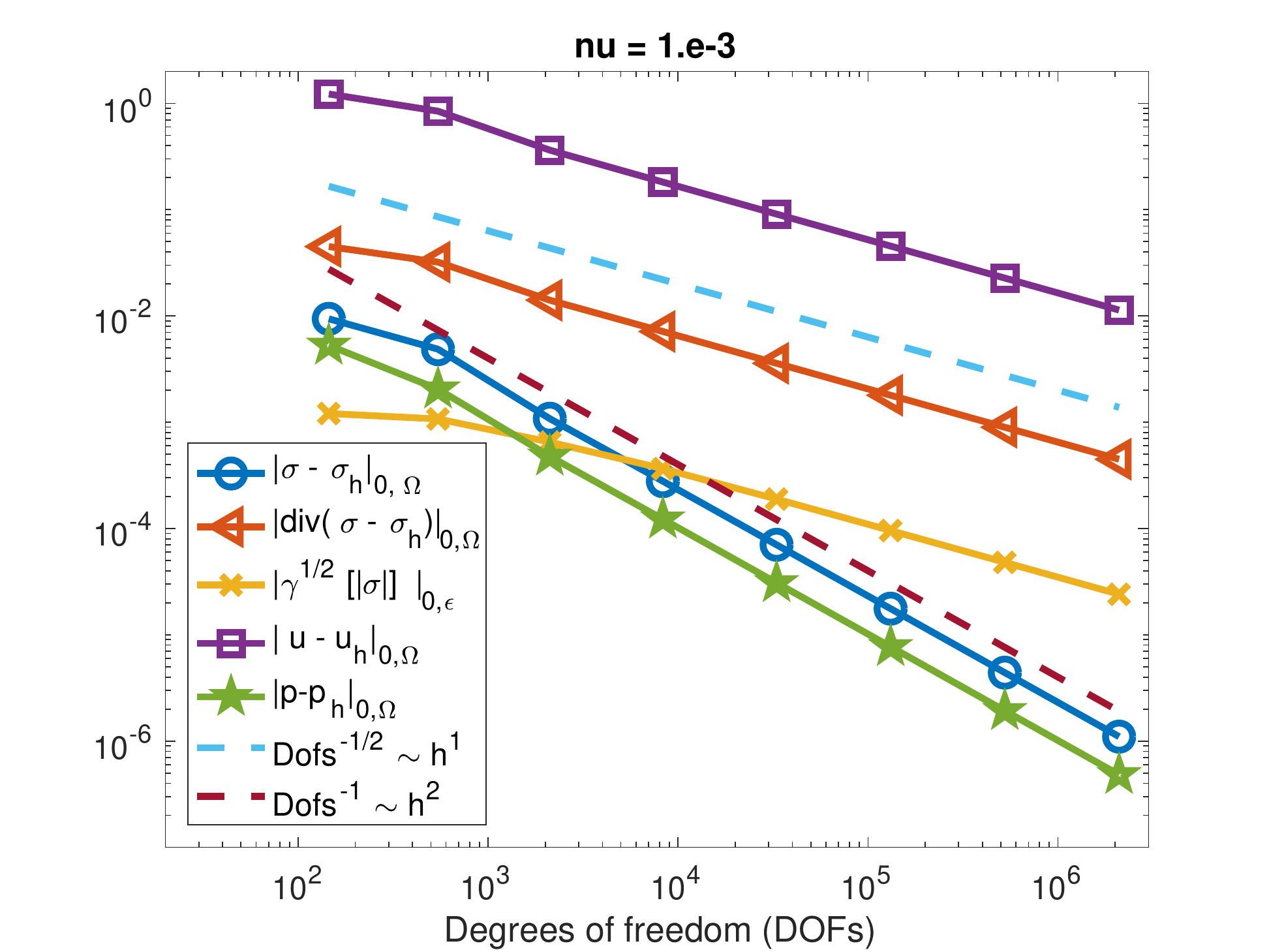}
		
		\includegraphics[scale=0.3]{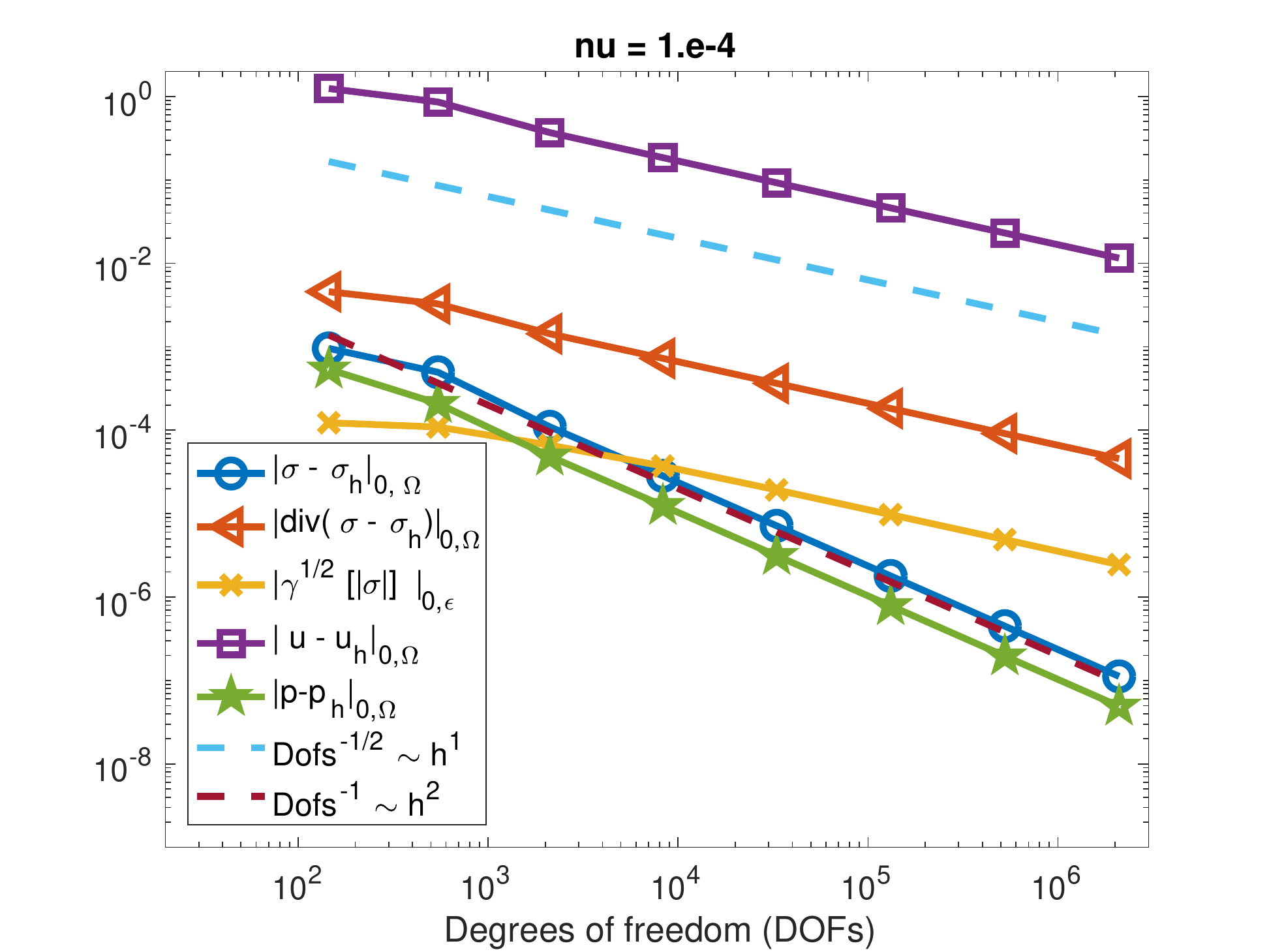}
		\quad
		\includegraphics[scale=0.3]{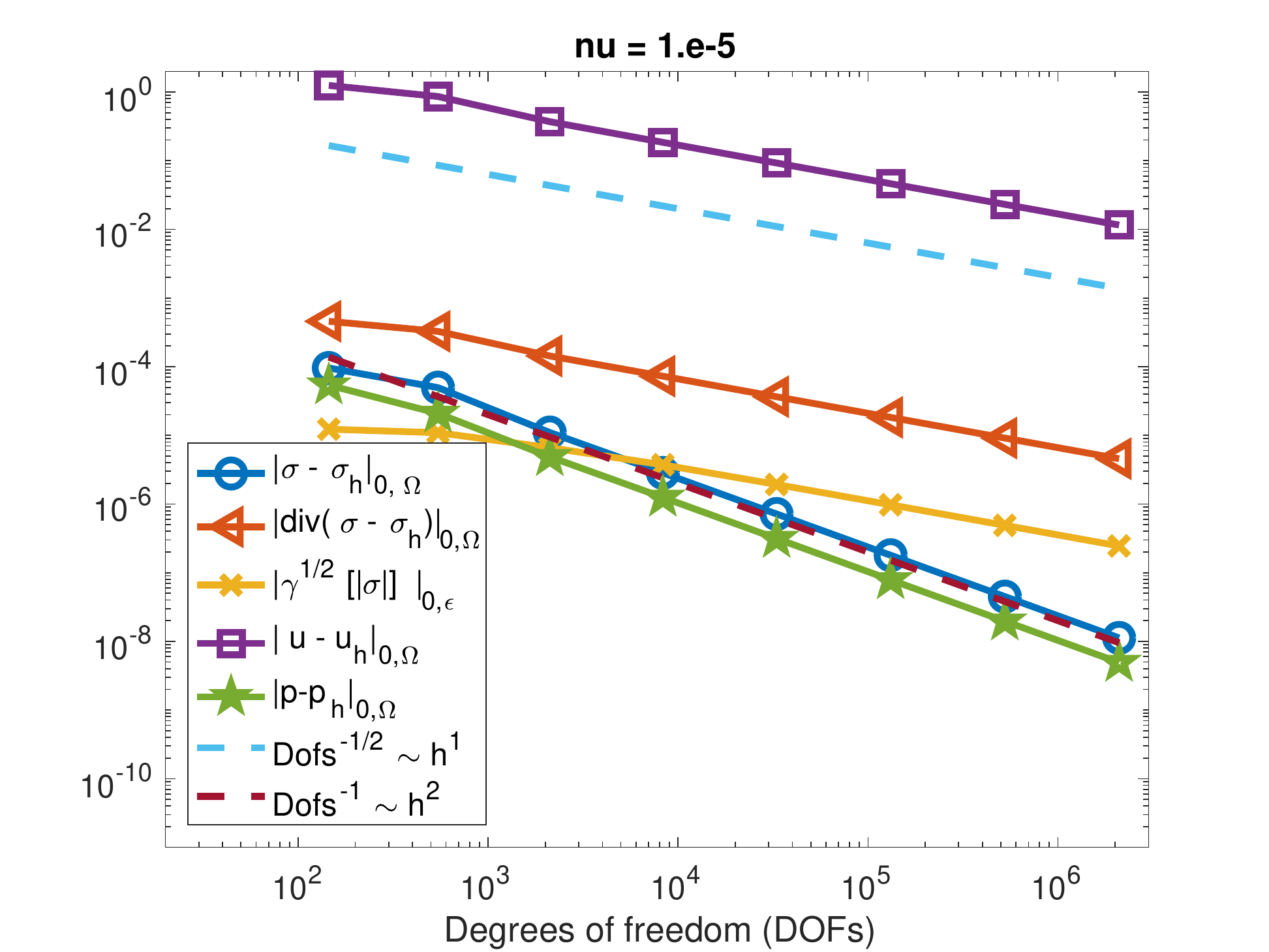}
		\caption{S1. Decay of errors for $\nu \in \{1, 10^{-1}, \ldots, 10^{-5}\}.$}
		\label{figS1_EOC}
\end{figure}

In Figure~\ref{figS1_VP}, we show the velocity stream lines and pressure at iteration 4, 
for the viscosity values analyzed. We do not appreciate any instability featured caused by
a small viscosity. 

\begin{figure}[hbt!]
		\centering
		\includegraphics[scale=0.12]{./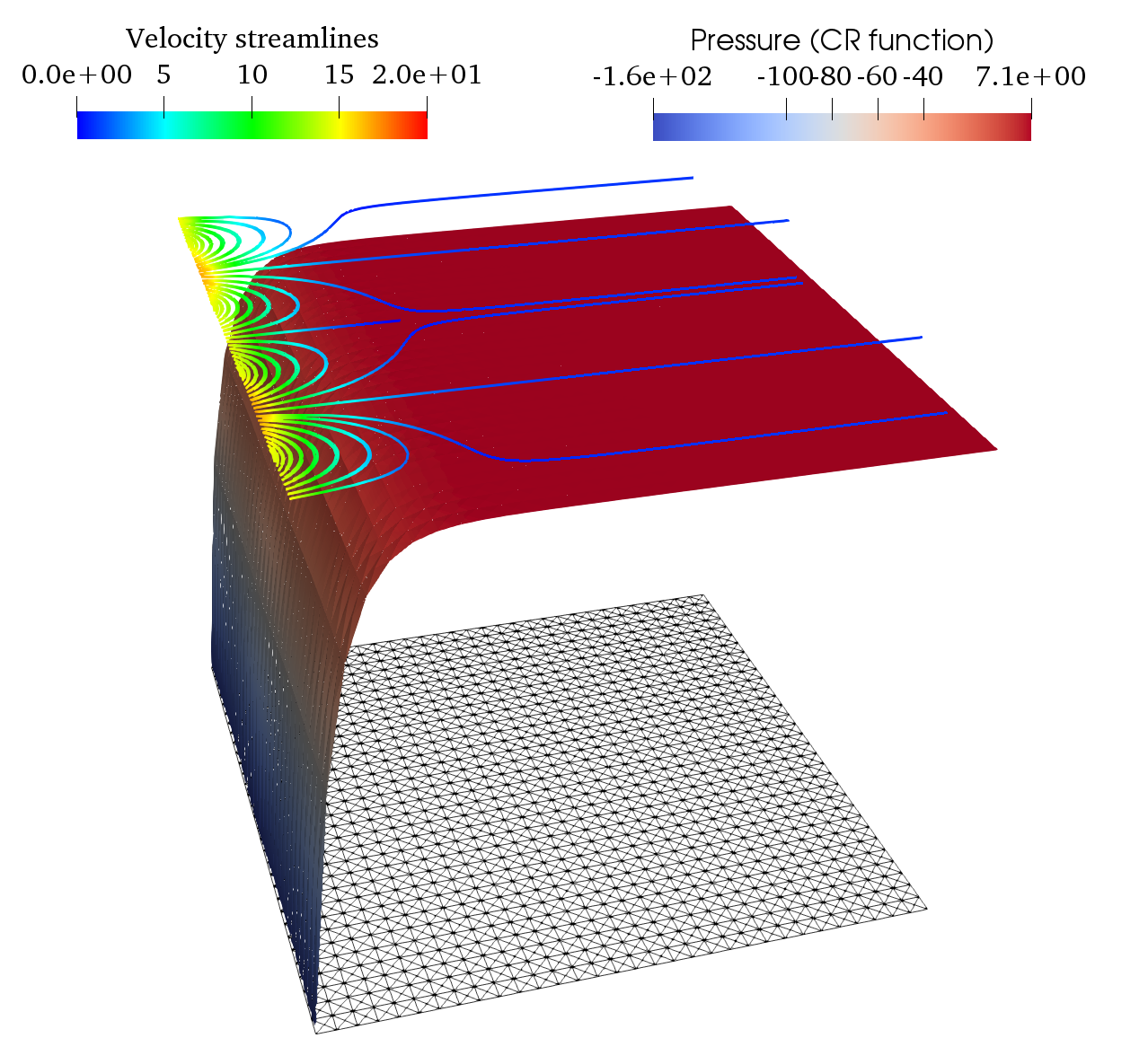}
		\quad
		\includegraphics[scale=0.12]{./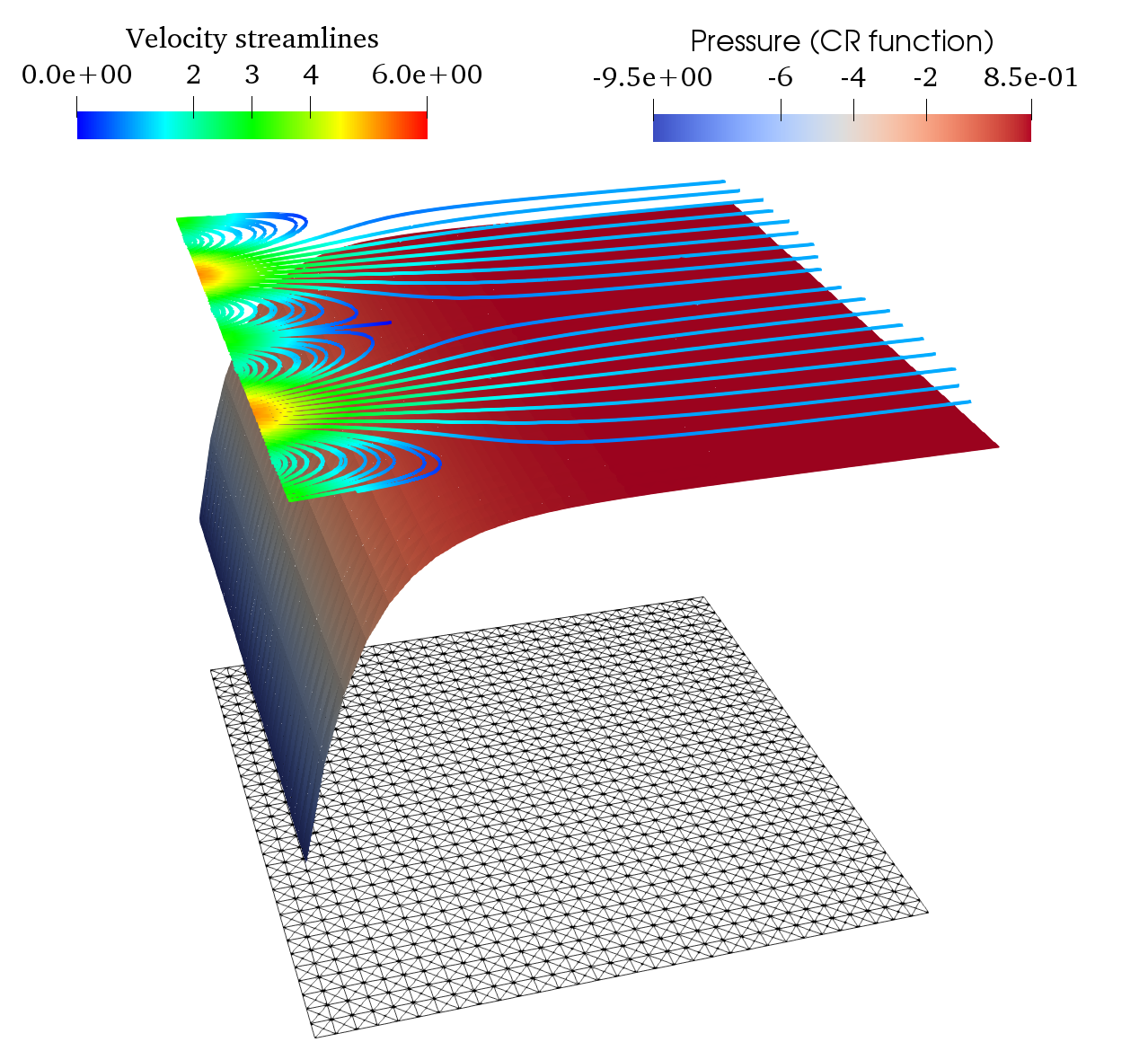}
		
		\includegraphics[scale=0.12]{./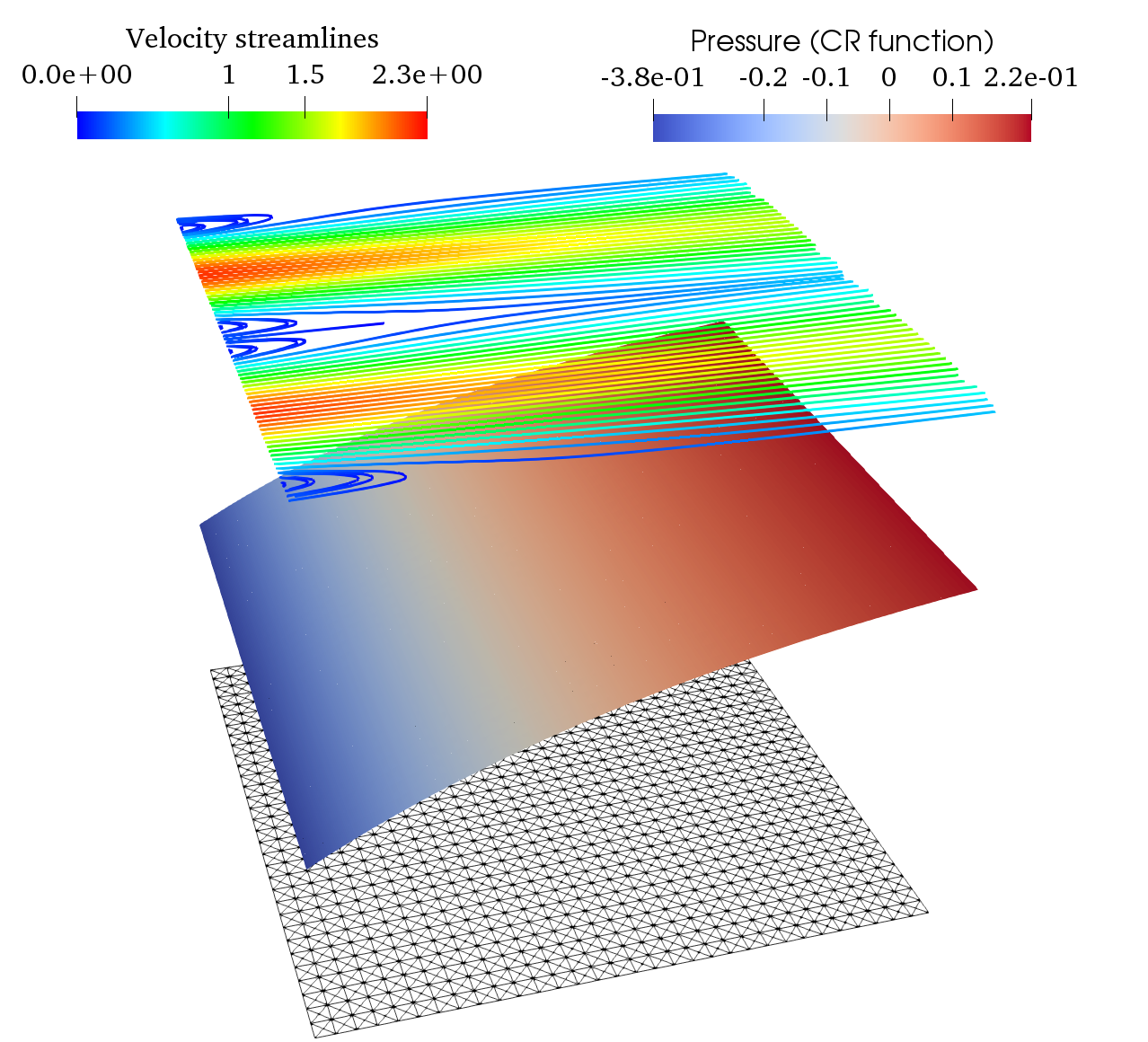}
		\quad
		\includegraphics[scale=0.12]{./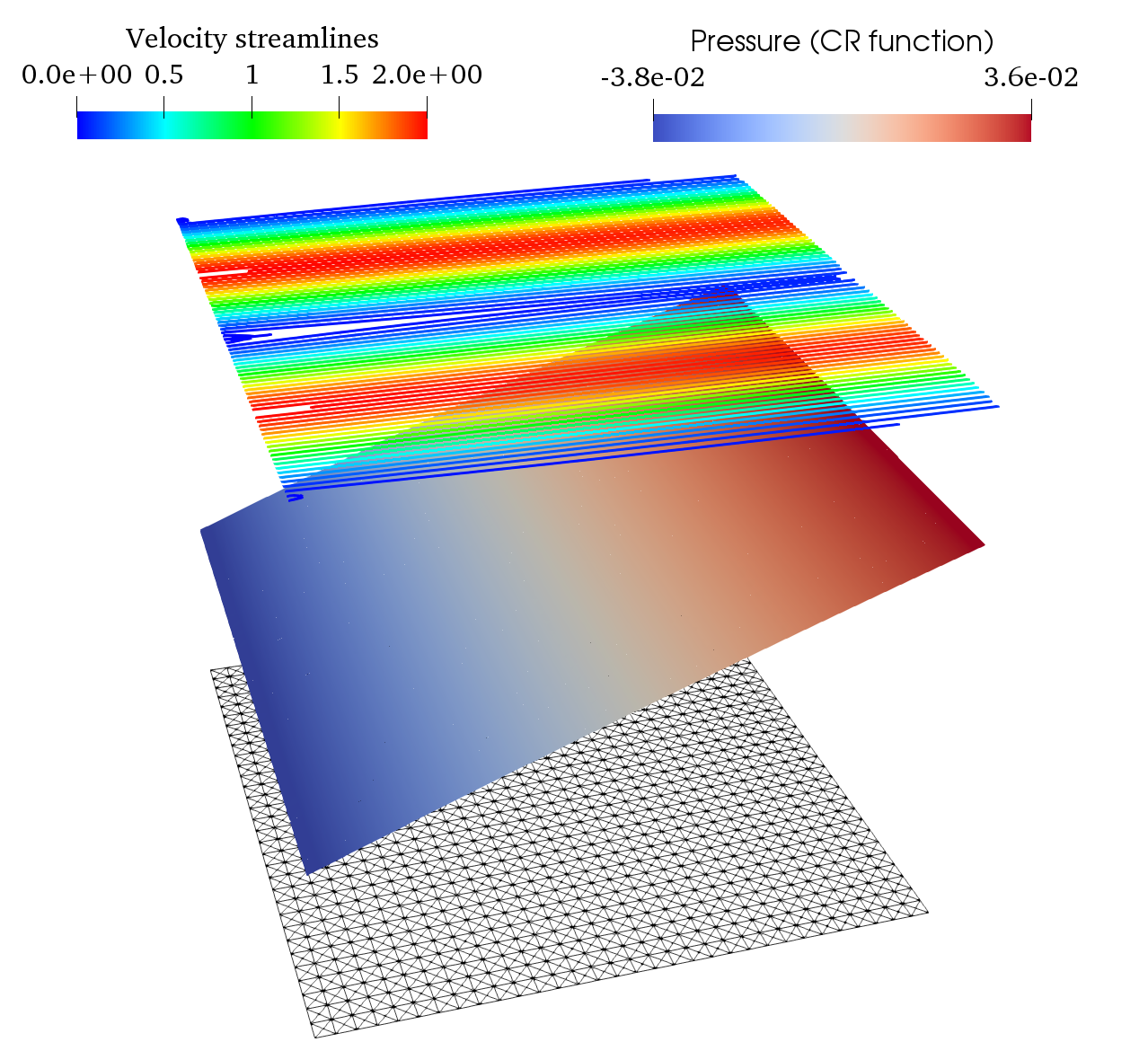}
		
		\includegraphics[scale=0.12]{./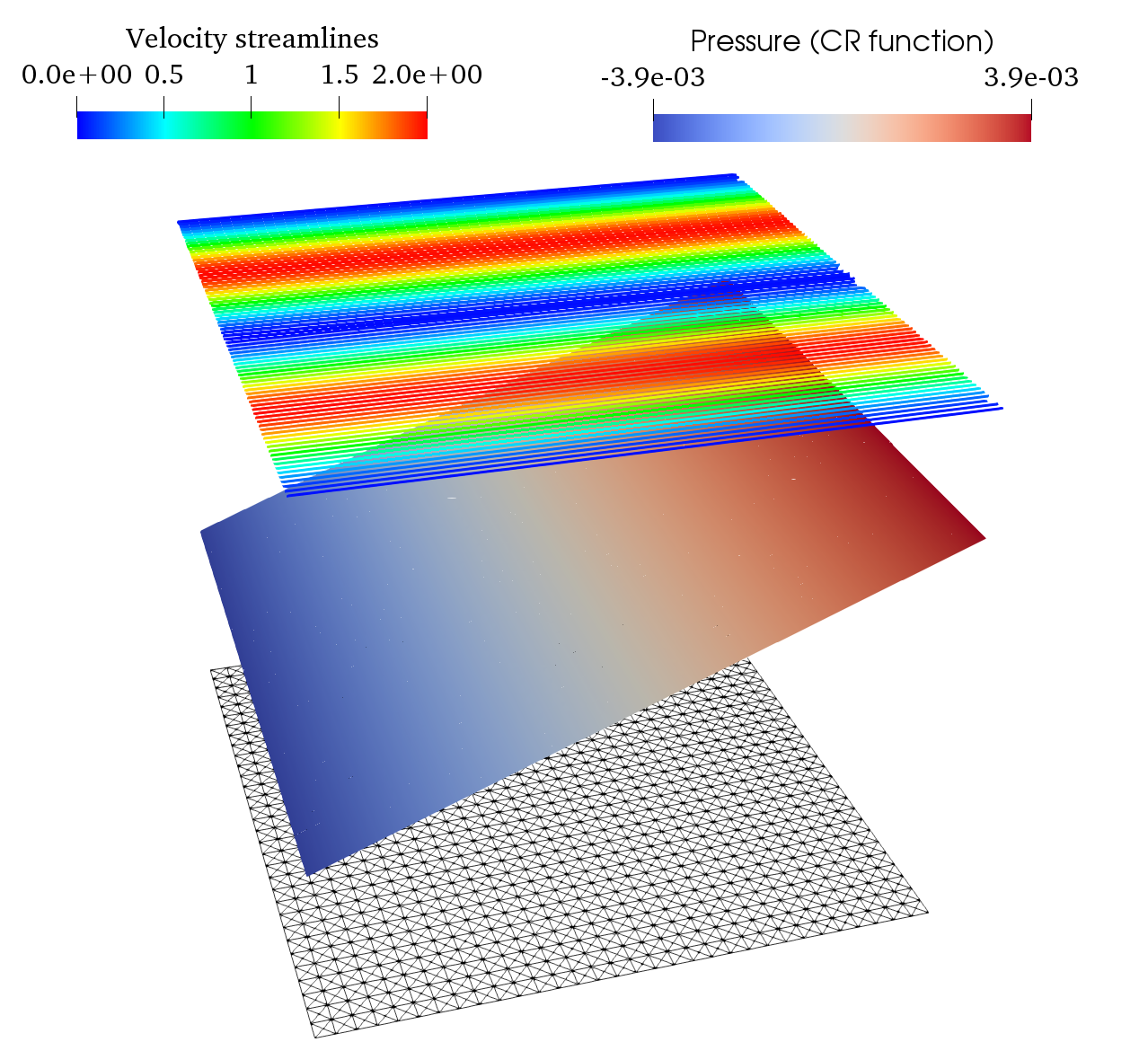}
		\quad
		\includegraphics[scale=0.12]{./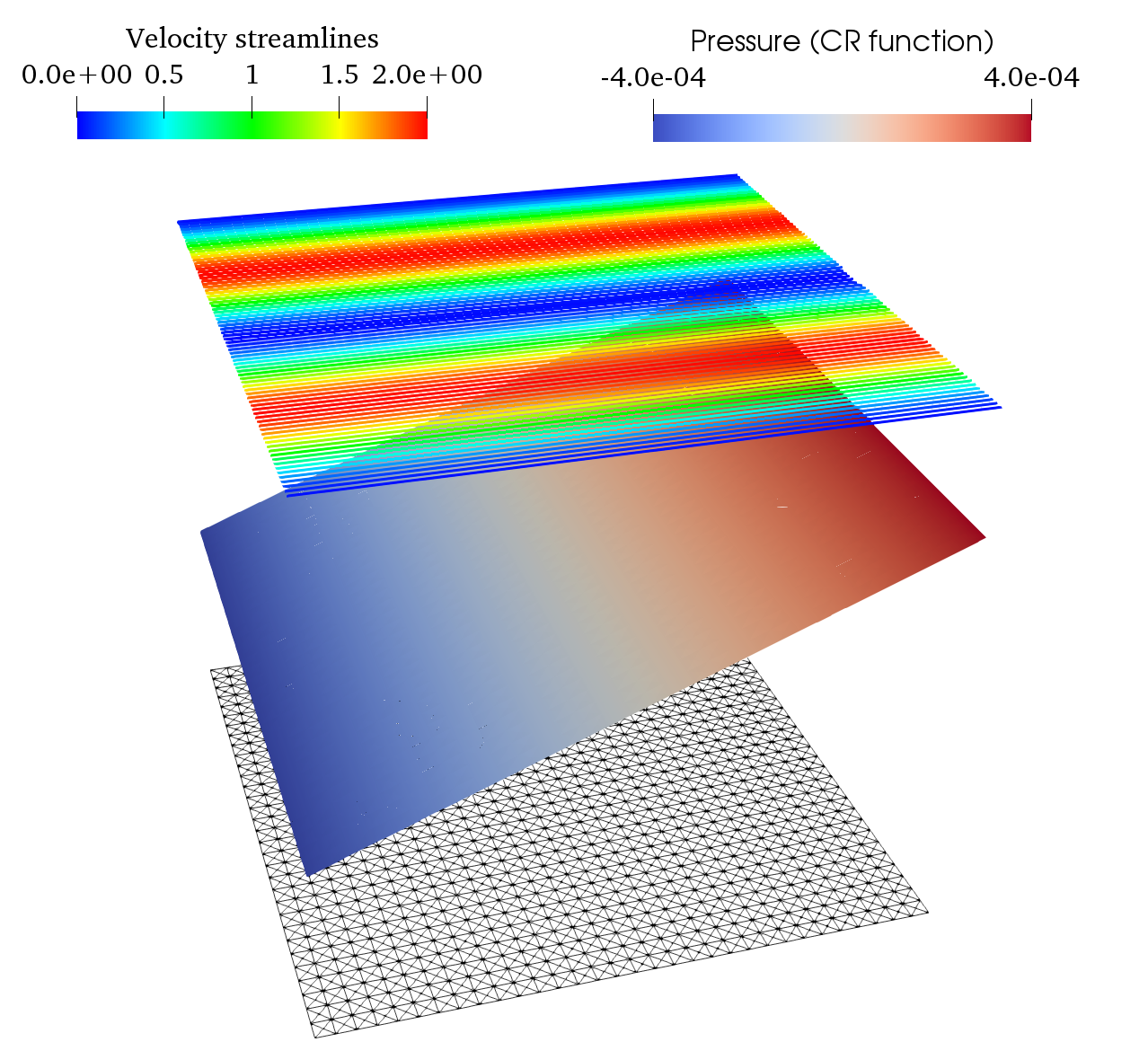}
		\caption{S1. Velocity stream lines and pressure at iteration 4, for 
		$\nu \in \{1, \dots, 10^{-5}\}$ (left to right, top to bottom).
Note that the scale is different.}
		\label{figS1_VP}
\end{figure}

\FloatBarrier
\subsubsection{S2. M-shaped domain}
We now consider the singular solution of \eqref{eqStokes2} proposed in \cite[page 113]{verf1996review}.
Again, let $\Omega$ be M-shaped domain shown in Figure~\ref{figInitialMeshes}(left), and take $\bff \equiv\boldsymbol0$ and $\nu=1.$
Then, if $(r,\theta)$ denote the polar coordinates, we impose an appropriate 
boundary condition for $\bfu$ so that:
\begin{equation}\label{eqS2a}
\begin{split}
		\bfu(r,\theta) &= r^{\lambda}
\begin{bmatrix}
(1 + \lambda) \sin (\theta) \Psi(\theta) + \cos (\theta) \Psi'(\theta) \\
\sin(\theta) \Psi'(\theta) - (1+\lambda) \cos(\theta) \Psi(\theta)
\end{bmatrix},
\\
p(r,\theta) &= -r^{\lambda-1}\left( (1+\lambda)^2 \Psi'(\theta) + \Psi'''(\theta) \right)/(1-\lambda),
\end{split}
\end{equation}
where 
\begin{equation}\label{eqS2b}
\begin{split}
\Psi(\theta) = &\sin((1+\lambda) \theta) \cos(\lambda \omega)/(1+\lambda) - \cos((1+\lambda)\theta)
\\ & - \sin((1-\lambda) \theta ) \cos(\lambda \omega)/(1-\lambda) + \cos(1-\lambda) \theta)\,.
\end{split}
\end{equation}
Here, $\omega = \frac{3\pi}{2}$ and the coefficient $\lambda$ is the smallest positive solution of:
\[
	\sin(\lambda \omega) + \lambda \sin (\omega) = 0 \quad \Rightarrow \quad \lambda\approx 0.54448373\ldots
\]
We emphasize that $(\bfu,p)$ are singular functions, and at the origin 
$\bfu \in H^{1+\lambda}$ and $p, \bsi\in H^{\lambda}.$
Table~\ref{tabS2} gives the individual errors and the corresponding rates.
Again, we omit the divergence error because its size is close  to the rounding unit.

As in the Poisson problem, we note that the jump and the $L_2$-error of $\bfu$ decay with rate $h^1.$
The $L_2$ error of $\bsi$ decay according to the regularity of the exact solution.
Figure~\ref{figS2_VP} show the velocity and pressure at iteration 3 and 6. We observe some oscillations in the first steps that diminish later as the mesh is refined.

\begin{table}[hbt!] \scriptsize
\centering
\caption{Error behavior for S2  problem \eqref{eqS2a}-\eqref{eqS2b} }
\label{tabS2}
\begin{tabular}{rrcccccccc}
\toprule
iter & DOFs 
& $\Lnorm{\bsi-\bsi_h}{\Omega}$ & EOC 
& $\Lnorm{\gamma^{1/2} \jump{\bsi_h}}{\cE_I}$ & EOC
& $\Lnorm{\bfu-\bfu_h}{\Omega}$ & EOC
& $\Lnorm{p-p_h}{\Omega}$ & EOC\\
\midrule
0 &       117 	 &	 3.995\hspace{7mm}  &  -- 	&      1.002\hspace{7mm}  & --  &		  6.341$\cdot10^{-1}$ &  --  	&	  2.612\hspace{7mm}  & --  \\
1    &    425  	 &	 2.781\hspace{7mm}  &  0.56 &	 6.451$\cdot10^{-1}$  &0.68  &		  3.377$\cdot10^{-1}$ &  0.98  	&	  1.769\hspace{7mm} &  0.60 \\
2  &     1617 	 &	 1.851\hspace{7mm} &  0.61 &	 3.515$\cdot10^{-1}$  &0.91  &		  1.725$\cdot10^{-1}$ & 1.01  	&	  1.145\hspace{7mm}  & 0.65 \\
3  &     6305  &    1.232\hspace{7mm}  &  0.60 &	 1.815$\cdot10^{-1}$&0.97  &		  8.677$\cdot10^{-2}$  & 1.01  &		  7.476$\cdot10^{-1}$  & 0.63 \\
4  &    24897 &    8.289$\cdot10^{-1}$ &  0.58 &	 9.194$\cdot10^{-2}$  &0.99  &		  4.341$\cdot10^{-2}$   &1.01  &		  4.967$\cdot10^{-1}$  & 0.60 \\
5  &    98945 &    5.622$\cdot10^{-1}$ &  0.56 &   4.618$\cdot10^{-2}$  &1.00  &		  2.168$\cdot10^{-2}$  & 1.01 & 		  3.346$\cdot10^{-1}$ & 0.57 \\
6 &   394497 &    3.833$\cdot10^{-1}$ &  0.55 &	 2.312$\cdot10^{-2}$  &1.00  &		  1.082$\cdot10^{-2}$ & 1.00 & 		  2.273$\cdot10^{-1}$   &0.56 \\
7 &   1575425&   2.620$\cdot10^{-1}$ &  0.55& 	 1.155$\cdot10^{-2}$  &1.00&  		  5.406$\cdot10^{-3}$  & 1.00 &  	 1.551$\cdot10^{-1}$ &    0.55 \\
\bottomrule
\end{tabular}
\end{table}

\begin{figure}[hbt!]
		\centering
		\includegraphics[scale=0.12]{./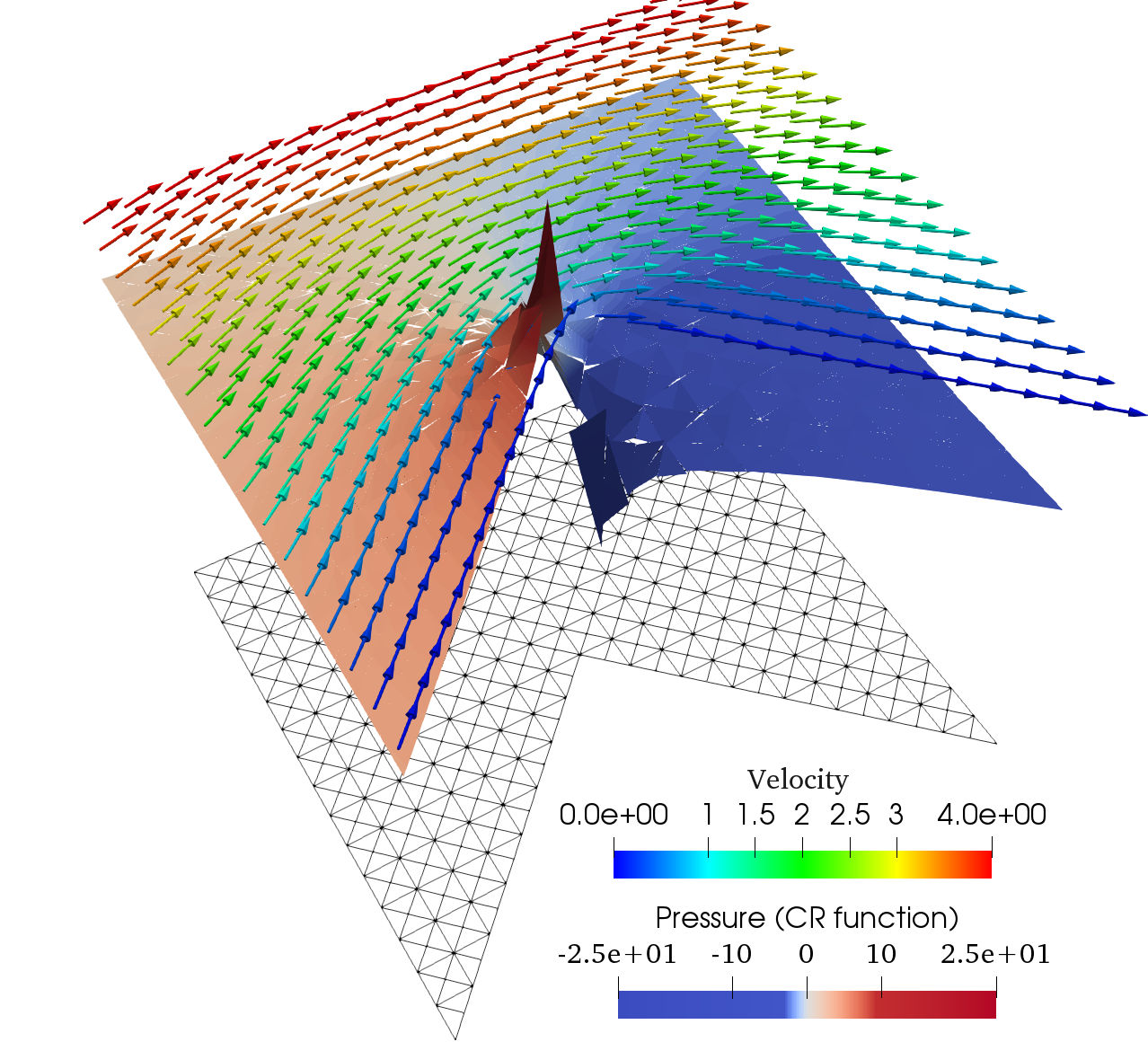}
		\quad
		\includegraphics[scale=0.12]{./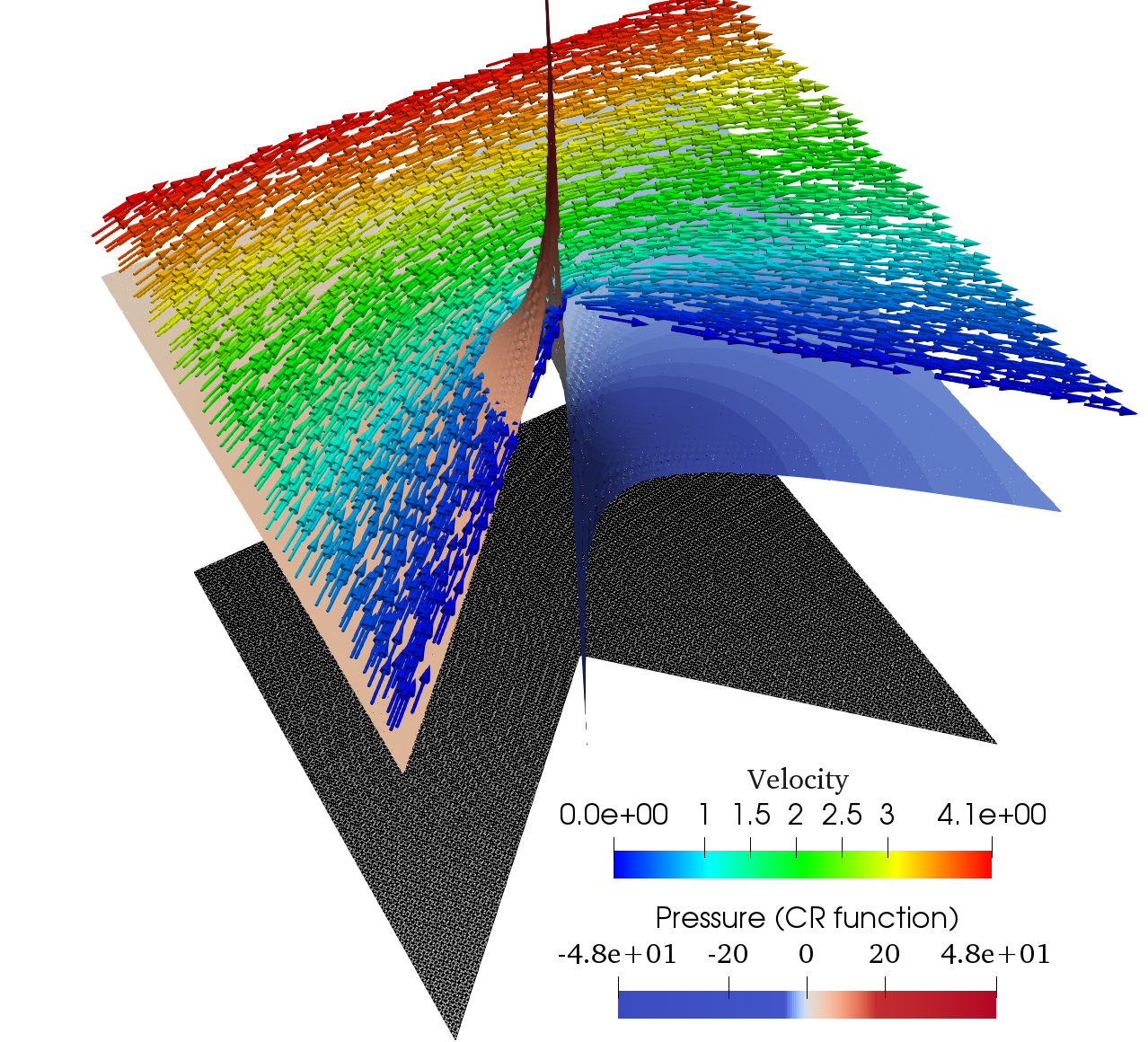}
		
		\caption{S2. Velocity  and pressure at iteration 3 and 6.}
		\label{figS2_VP}
\end{figure}

\subsubsection{S3. Crack domain}
This last example is taken from \cite[page 113]{verf1996review}.
We replicate the previous example on the crack domain, Figure~\ref{figInitialMeshes}(right)
with $(\bfu,p)$ given by \eqref{eqS2a} and 
\begin{equation}\label{eqS3}
\Psi(\theta) = 3 \sin(\theta/2) - \sin (3\theta/2), \quad
\lambda = 1/2, \quad \omega = 2 \pi  \,.
\end{equation}

Table~\ref{tabS3} reports the error decay.
Again, we leave out the divergence error as it is close to the rounding unit.
Although, this example is not covered by our theory, we obtain results equivalent to those in  Section~\ref{S:P3}. 
We remark that the observed  rates for the $L_2$ errors in $\bsi$ and  $p$ are slightly better than the expected value $(0.5).$ 
Finally,  Figure~\ref{figS3_VP}  shows the velocity-field and pressure at iteration 2 and 5.

\begin{table}[hbt!] \scriptsize
\centering
\caption{Error behavior for S3  problem \eqref{eqS2a}-\eqref{eqS3}}
\label{tabS3}
\begin{tabular}{rrcccccccc}
\toprule
iter & DOFs 
& $\Lnorm{\bsi-\bsi_h}{\Omega}$ & EOC 
& $\Lnorm{\gamma^{1/2} \jump{\bsi_h}}{\cE_I}$ & EOC
& $\Lnorm{\bfu-\bfu_h}{\Omega}$ & EOC
& $\Lnorm{p-p_h}{\Omega}$ & EOC\\
\midrule
0&        153 	&	 9.801\hspace{7mm} &  -- 		  &			 1.218\hspace{7mm}  & -- &  		  8.093$\cdot10^{-1}$  & -- &  		    6.657\hspace{7mm}   & --  \\
1 &       561 	&	 8.096\hspace{7mm} &  0.29 		  &		 8.714$\cdot10^{-1}$ & 0.52  &		  4.669$\cdot10^{-1}$   &0.85 & 		    5.449\hspace{7mm}  & 0.31 \\
2   &    2145 	&	 5.685\hspace{7mm} & 0.53 		 &  		 5.362$\cdot10^{-1}$ & 0.72  &		  2.580$\cdot10^{-1}$   &0.88  		  &3.744\hspace{7mm} & 0.56\\ 
3  &     8385 	&	 3.750\hspace{7mm}  & 0.61 		&  	        3.107$\cdot10^{-1}$  &0.80  	      & 1.363$\cdot10^{-1}$    & 0.94  		 & 2.395\hspace{7mm}   &0.66\\ 
4  &    33153 &	 2.464\hspace{7mm} &  0.61 	&	 		 1.772$\cdot10^{-1}$ & 0.82  &		  7.034$\cdot10^{-2}$ &  0.96  	&	    1.520\hspace{7mm}  & 0.66 \\
5  &   131841 &	 1.649\hspace{7mm}  & 0.58 &		 	        1.008$\cdot10^{-1}$  &0.82  		&3.584$\cdot10^{-2}$  & 0.98  		&  9.861$\cdot10^{-1}$   &0.63 \\
6  &   525825 &	 1.128\hspace{7mm}  & 0.55 &		  		 5.766$\cdot10^{-2}$ & 0.81  	&	  1.813$\cdot10^{-2}$ &  0.99 & 		    6.591$\cdot10^{-1}$ &  0.58 \\
7  &  2100225& 	 7.844$\cdot10^{-1}$  & 0.53& 	3.320$\cdot10^{-2}$ & 0.80  		&9.133$\cdot10^{-3}$  & 0.99  	&	    4.514$\cdot10^{-1}$  & 0.55 \\
\bottomrule
\end{tabular}
\end{table}

\begin{figure}[hbt!]
		\centering
		\includegraphics[scale=0.12]{./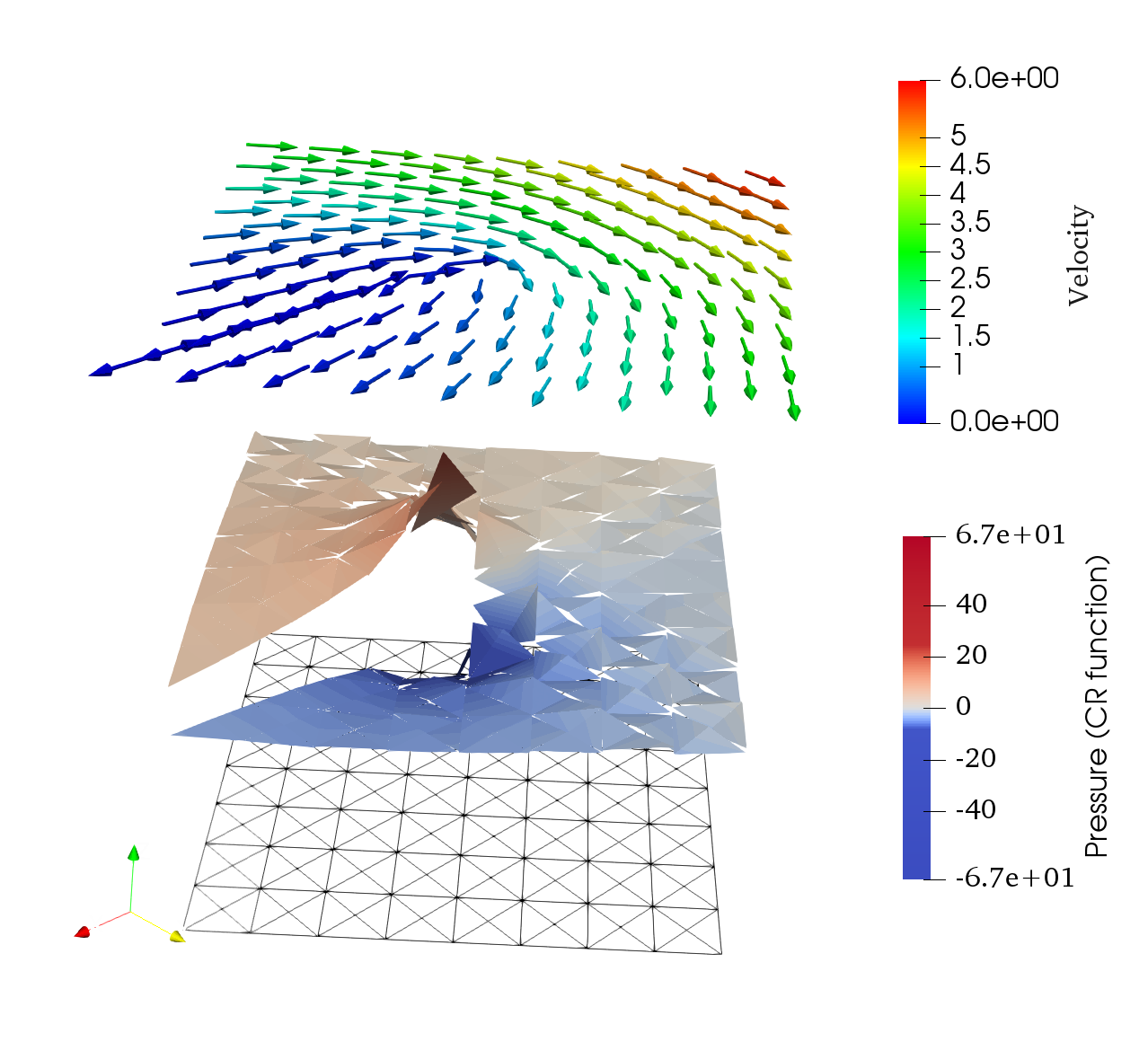}
		\quad
		\includegraphics[scale=0.12]{./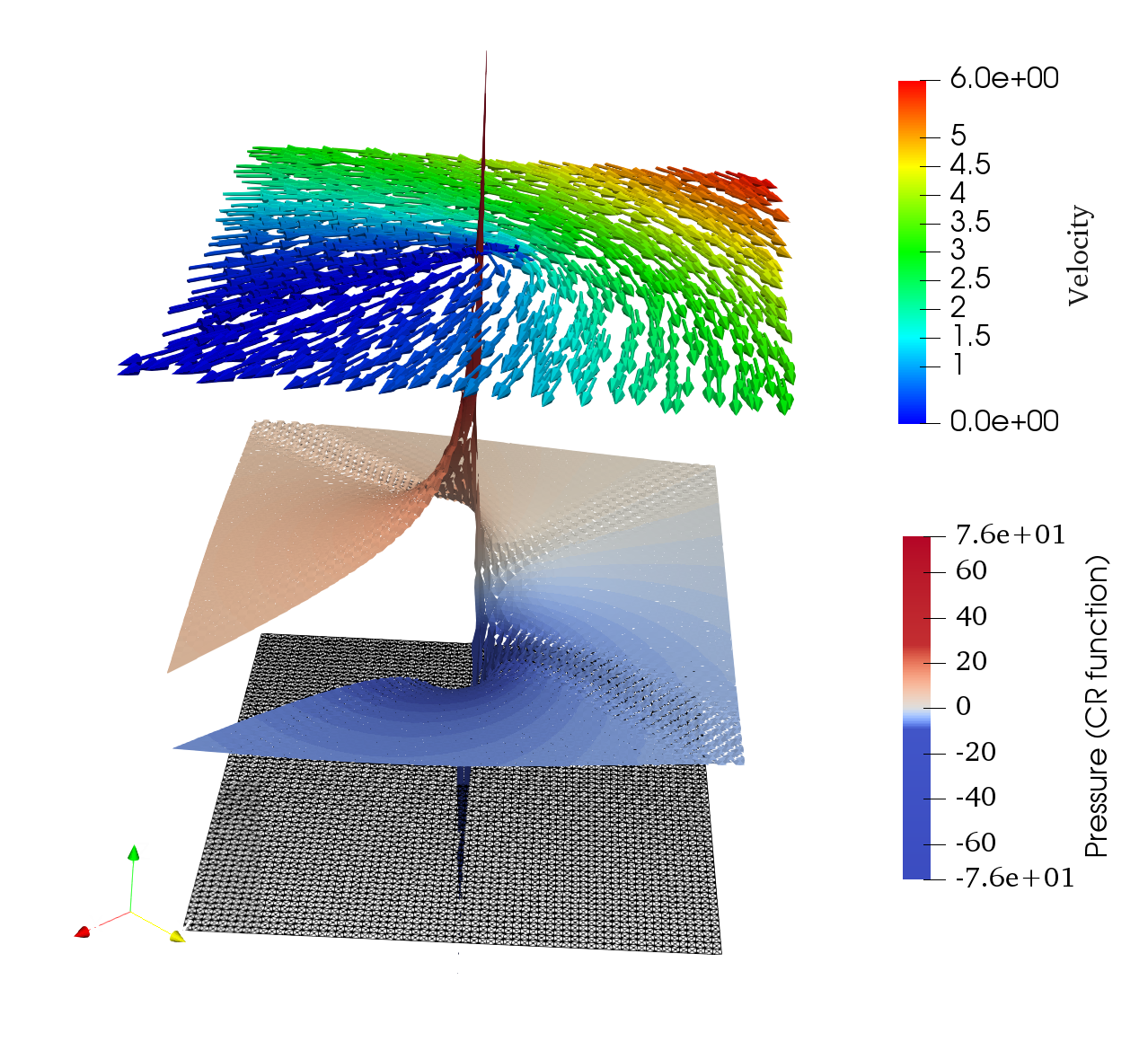}
		
		\caption{S3. Velocity  and pressure at iteration 2 and 5.}
		\label{figS3_VP}
\end{figure}

\section{Final comments and conclusions}\label{section-5}
  We developed an \emph{a priori} error  analysis for a low-order non-conforming method applied to the dual-mixed variational formulation of the Stokes and Poisson  problem, and thus also for Darcy flow.
 Our \emph{a priori} error estimates were  established using Crouzeix-Raviart elements to approximate the $\bsi=\nabla u$  and piecewise constants to approximate $u.$
We proved \emph{a priori} estimates for smooth solutions.
Low regularity and \emph{a posteriori} estimates are part of a forthcoming paper.
However, the divergence error was proven to converge optimally for any regularity.
  The experimental approximation errors of our non-conforming scheme
   improve those of the conforming Raviart-Thomas approach.

Finally, an advantage of this approach over the one with Raviart-Thomas elements,
 is that, the CR-element is defined component-wise and not vector-wise.
 In fact, this leaves more flexibility for a symmetry restriction when dealing with elasticity equations, which is much more difficult to 
 achieve with the Raviart-Thomas element.
 In particular for the lowest order  Raviart-Thomas pair, this could only be done  for a tensor with constant entries. 
 We therefore, intent to extend the CR-element to the  elasticity problem  in a separate paper.

%
 \subsection*{Acknowledgement}
 We are grateful to Erik Burman for providing us with the reference \cite{BH2005}.





\end{document}